\documentclass[12pt]{amsart}


\usepackage[top=1in, bottom=1in, left=1in, right=1in]{geometry}
\usepackage{times}

\usepackage{amssymb,amsmath,amsthm,mathtools,bbm}
\usepackage{enumitem}

%

\setlist[1]{itemsep=0.5em, topsep=0.5em}

\usepackage{color}
\definecolor{red}{rgb}{1,0,0}
\definecolor{orange}{rgb}{0.7,0.3,0}
\definecolor{blue}{rgb}{0,.3,.7}
\definecolor{green}{rgb}{0,.6,.4}
\usepackage[colorlinks=true, linkcolor=blue, citecolor=green, urlcolor=purple]{hyperref}   



\numberwithin{equation}{section}





\theoremstyle{plain}
\newtheorem{theorem}{Theorem}

\newtheorem{lemma}{Lemma}[section]
\newtheorem{cororollary}[lemma]{Corollary}
\newtheorem{proposition}[lemma]{Proposition}

\theoremstyle{remark}

\newtheorem*{rem*}{Remark}

\theoremstyle{definition}
\newtheorem{dfn}{Definition}



\newcommand{\N}{\mathbb{N}}
\newcommand{\Z}{\mathbb{Z}}

\newcommand{\R}{\mathbb{R}}
\newcommand{\C}{\mathbb{C}}

\newcommand{\CA}{\mathcal{A}}
\newcommand{\CB}{\mathcal{B}}

\newcommand{\CE}{\mathcal{E}}

\newcommand{\CK}{\mathcal{K}}
\newcommand{\CL}{\mathcal{L}}

\newcommand{\CN}{\mathcal{N}}

\newcommand{\CP}{\mathcal{P}}

\newcommand{\CR}{\mathcal{R}}
\newcommand{\CS}{\mathcal{S}}

\newcommand{\CV}{\mathcal{V}}
\newcommand{\CW}{\mathcal{W}}
\newcommand{\CX}{\mathcal{X}}


\newcommand{\al}[1]{\begin{align} #1 \end{align} }
\newcommand{\als}[1]{\begin{align*} #1 \end{align*} }

\newcommand{\ds}{\displaystyle}

\newcommand{\dee}{\,\mathrm{d}}

\newcommand{\lcm}{\operatorname{lcm}}

\newcommand{\eps}{\varepsilon}
\renewcommand{\epsilon}{\varepsilon}
\renewcommand{\phi}{\varphi}

\newcommand{\bs}\boldsymbol{}

\newcommand{\bg}{\big}
\newcommand{\bgg}{\Big}
\newcommand{\bggg}{\bigg}
\newcommand{\bgggg}{\Bigg}

\renewcommand{\le}{\leqslant}

\renewcommand{\ge}{\geqslant}
{}

\renewcommand{\tilde}{\widetilde}

\definecolor{red}{rgb}{1,0,0}

\definecolor{orange}{rgb}{0.7,0.3,0}

\definecolor{blue}{rgb}{.2,.6,.75}

\definecolor{green}{rgb}{.4,.7,.4}


\newcommand{\un}{\mathbbm{1}}

\begin{document}
	\title{An almost sharp quantitative version of the Duffin--Schaeffer conjecture}

	\author{Dimitris Koukoulopoulos}
	\address{D\'epartement de math\'ematiques et de statistique\\
		Universit\'e de Montr\'eal\\
		CP 6128 succ. Centre-Ville\\
		Montr\'eal, QC H3C 3J7\\
		Canada}
	\email{dimitris.koukoulopoulos@umontreal.ca}

	\author{James Maynard}
	\address{Mathematical Institute, Radcliffe Observatory quarter, Woodstock Road, Oxford OX2 6GG, England}
	\email{james.alexander.maynard@gmail.com}
	
		\author{Daodao Yang}
		\address{D\'epartement de math\'ematiques et de statistique\\
		Universit\'e de Montr\'eal\\
		CP 6128 succ. Centre-Ville\\
		Montr\'eal, QC H3C 3J7\\
		Canada}
		\email{yangdao2@126.com}

\subjclass[2010]{Primary: 11J83. Secondary: 05C40}
\keywords{Diophantine approximation, Metric Number Theory, Duffin--Schaeffer conjecture, graph theory, density increment, compression arguments}

\date{\today}

\begin{abstract} We prove a quantitative version of the Duffin--Schaeffer conjecture with an almost sharp error term. Precisely, let $\psi:\mathbb{N}\to[0,1/2]$ be a function such that the series $\sum_{q=1}^\infty \varphi(q)\psi(q)/q$ diverges. In addition, given $\alpha\in\mathbb{R}$ and $Q\geqslant1$, let $N(\alpha;Q)$ be the number of coprime pairs $(a,q)\in\mathbb{Z}\times\mathbb{N}$ with $q\leqslant Q$ and $|\alpha-a/q|<\psi(q)/q$. Lastly, let $\Psi(Q)=\sum_{q\leqslant Q}2\varphi(q)\psi(q)/q$, which is the expected value of $N(\alpha;Q)$  when $\alpha$  is   uniformly chosen from $[0, 1]$.  We prove that $N(\alpha;Q)=\Psi(Q)+O_{\alpha,\varepsilon}(\Psi(Q)^{1/2+\varepsilon})$ for almost all $\alpha$ (in the Lebesgue sense) and for every fixed $\varepsilon>0$. This improves upon results of Koukoulopoulos--Maynard and of Aistleitner--Borda--Hauke.
\end{abstract}

\maketitle
	
	\section{Introduction}
	
	A classical result of Khintchine \cite{Khinchine} states that if $\psi:\N\to[0,1/2]$ is a function such that $q\psi(q)$ is non-increasing and $\sum_{q}\psi(q)=\infty$, then for almost all $\alpha\in\R$ (that is, for $\alpha$ in a set of full Lebesgue measure) there are infinitely many pairs $(a,q)\in\Z\times \N$ such that
	\begin{equation}
		\label{eq:Khintchine}
	\bgg|\alpha-\frac{a}{q}\bgg| < \frac{\psi(q)}{q}.
	\end{equation}
	Conversely, if $\sum_{q}\psi(q)<\infty$ then the Borel--Cantelli lemma implies that for almost all $\alpha\in\R$ there are only finitely many pairs $(a,q)\in\Z\times \N$ satisfying \eqref{eq:Khintchine}. Hence, Khintchine's theorem is optimal up to the hypothesis that $q\psi(q)$ is non-increasing. In fact, in modern treatments of Khintchine's theorem (e.g., see \cite{Harman}) this condition is relaxed to simply require that $\psi$ is non-increasing.

	Erd\H os \cite{Erdos} and Schmidt \cite{Schmidt} proved a quantitative version of Khintchine's theorem: if $\psi$ is non-increasing and we let
	\[
	\Psi_1(Q):= 2\sum_{q\le Q}\psi(q),
	\]
	then we have that \cite[Theorem 4.1]{Harman} 
	\begin{equation}
	\#\Bigl\{(a,q)\in\mathbb{Z}^2:\,q\le Q,\,\eqref{eq:Khintchine} \text{ holds}\Bigr\}
	=	\Psi_1(Q)	+	O_{\eps}\Bigl(\Psi_1(Q)^{1/2} (\log{\Psi_1(Q)})^{2+\eps}\Bigr)
			\label{eq:QuantitativeKhintchine}
	\end{equation}
	for almost all $\alpha\in\R$, and for all $Q$ sufficiently large in terms of $\alpha$ and $\psi$. As a matter of fact, if we impose some even stronger conditions on $\psi$, we know from work of Fuchs \cite{Fuchs} that the quantity on the left-hand side of \eqref{eq:QuantitativeKhintchine} satisfies a Central Limit Theorem. In particular, the error term on the right-hand side of \eqref{eq:QuantitativeKhintchine} is nearly sharp.

	Duffin and Schaeffer \cite{DS} undertook a study to understand to what extent Khintchine's theorem holds without the hypothesis that $\psi$ is non-increasing. They realized that the fact that the fractions $a/q$ are not reduced can lead to degenerate situations, with the series $\sum_{q=1}^\infty \psi(q)$ diverging but with the inequality \eqref{eq:Khintchine} having only finitely many solutions for almost all $\alpha$. This led them to formulate the following conjecture: if $\psi:\N\rightarrow \R_{\ge 0}$ is a function such that $\sum_{q=1}^\infty \psi(q)\phi(q)/q=\infty$ (with $\phi$ denoting the Euler totient function), then for almost all $\alpha\in\R$ the inequality \eqref{eq:Khintchine} is satisfied by infinitely many pairs $(a,q)\in\Z\times\N$ of {\it coprime} integers. Conversely, the first Borel--Cantelli lemma readily implies that if $\sum_{q=1}^\infty \psi(q)\phi(q)/q<\infty$, then for almost all $\alpha\in\R$ there are only finitely many coprime pairs $(a,q)\in\Z\times\N$ satisfying \eqref{eq:Khintchine}. This would therefore give a full 0-1 law with a simple criterion to determine whether the measure of approximable $\alpha$ is full or null. 
	
	The Duffin--Schaeffer conjecture was resolved in full by the first two named authors \cite{KM} after important partial results by many authors \cite{aistleitner, aistleitner2, ALMTZ, DS-survey-article, extra-div2, hausdorff DS, catlin, dyson, erdos, erdoskorado, gallagher, extra-div1, PV, vaaler}. See also \cite{DS-exposition} for an exposition on the history of the conjecture.
	
	The work of \cite{KM} left open the quantitative question of the number of approximations, corresponding to \eqref{eq:QuantitativeKhintchine}. Given a function $\psi:\N\to[0,1/2]$, let
	\begin{equation}
	\Psi(Q) \coloneqq 2\sum_{q\le Q}\frac{\phi(q)\psi(q)}{q} .
	\label{eq:PsiDef}
	\end{equation}
 Aistleitner, Borda and Hauke \cite{ABH} proved that if $\lim_{Q\to\infty}\Psi(Q)=\infty$, then for almost all $\alpha$ and all $C>0$, we have 
	\begin{equation}
	\#\Bigl\{(a,q)\in\mathbb{Z}\times \N :\,q\le Q,\,\eqref{eq:Khintchine} \text{ holds},\,\gcd(a,q)=1\Bigr\}
		=\Psi(Q)+O_C\Bigl(\frac{\Psi(Q)}{(\log\Psi(Q))^C}\Bigr),
	\label{eq:QuantitativeDS}
	\end{equation}
	whenever $Q$ was sufficiently large in terms of $\alpha$ and $\psi$. Therefore the expected asymptotic holds for almost all $\alpha\in\R$, in keeping with \eqref{eq:QuantitativeKhintchine}. The main goal of our paper is to establish a refinement of \eqref{eq:QuantitativeDS}, which obtains a near-sharp error term similar to the one in \eqref{eq:QuantitativeKhintchine}.

	\begin{theorem} \label{thm:quantitative DS} Let $\psi:\N\to[0,1/2]$. Assume that $\Psi(Q)= 2\sum_{q\le Q} \psi(q)\phi(q)/q\to\infty$ as $Q\to\infty$.
		
		Then there exists a set $\CB$ of Lebesgue measure $0$ such that, for all $\alpha \in \R\setminus \CB$ and $\eps>0$,
		\[
		\#\bggg\{(a,q)\in\Z\times\N:\,q\le Q,\,\bgg|\alpha-\frac{a}{q}\bgg| < \frac{\psi(q)}{q},\,\gcd(a,q)=1\bggg\}
		= \Psi(Q) +  O_{\eps}\bgg(\Psi(Q)^{\frac{1}{2}+\eps}\bgg)
		\]
for all $Q$ sufficiently large in terms of $\alpha$ and $\psi$. 
	\end{theorem}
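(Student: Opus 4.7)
The plan is to combine a near-sharp second moment (variance) estimate for the counting function $N(\alpha;Q)$ with a Borel--Cantelli argument along a sparse sequence, finishing with a monotonicity interpolation. For each $q\in\N$, define
\[
A_q := \bigcup_{\substack{0\le a\le q\\ \gcd(a,q)=1}}\bigg(\frac{a}{q}-\frac{\psi(q)}{q},\,\frac{a}{q}+\frac{\psi(q)}{q}\bigg)\pmod 1,
\]
so that (working on $\R/\Z$) $\mu(A_q)=2\phi(q)\psi(q)/q$ and, writing $N(\alpha;Q)=\sum_{q\le Q}\un_{A_q}(\alpha)$, one has $\int_0^1 N(\alpha;Q)\dee\alpha=\Psi(Q)$. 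The technical heart will be to prove the near-optimal variance bound
\[
V(Q) := \int_0^1\bigl(N(\alpha;Q)-\Psi(Q)\bigr)^2\dee\alpha = \sum_{p,q\le Q}\bigl(\mu(A_p\cap A_q)-\mu(A_p)\mu(A_q)\bigr) \ll_{\eps}\Psi(Q)^{1+\eps},
\]
together with a localised version for pairs $p,q\in(Q_1,Q_2]$.

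To establish this variance bound I would decompose the pairs $(p,q)$ dyadically in the sizes of $p,q,\psi(p),\psi(q)$ and in $\gcd(p,q)$. In each dyadic class the overlap $\mu(A_p\cap A_q)$ is handled by an elementary Diophantine calculation; the excess over $\mu(A_p)\mu(A_q)$ is produced only by ``bad'' pairs with an unusual denominator structure, precisely the configurations analysed in \cite{KM}. The GCD graph machinery and the density-increment/compression arguments of \cite{KM} can be invoked at this point to bound the aggregate excess overlap coming from bad pairs. The improvement over \cite{ABH} will come from sharpening the quantitative cost of each step of the density increment: every surgery in the GCD graph must cost at most $\Psi^{o(1)}$ rather than a fixed logarithmic factor, which in turn needs a careful accounting of the anatomy-of-integers losses and, plausibly, stronger compression lemmas than the ones currently in the literature.

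With the variance bound in hand, Chebyshev's inequality gives
\[
\mu\bigl(\bigl\{\alpha:|N(\alpha;Q)-\Psi(Q)|>\Psi(Q)^{1/2+\eps}\bigr\}\bigr)\ll_{\eps}\Psi(Q)^{-\eps}.
\]
Applying this along a geometric sequence $Q_n$ with $\Psi(Q_n)=(1+\eta)^n$ for small $\eta=\eta(\eps)>0$ makes these measures summable, so by Borel--Cantelli there is a null set $\CB$ such that for every $\alpha\notin\CB$,
\[
N(\alpha;Q_n) = \Psi(Q_n) + O\bigl(\Psi(Q_n)^{1/2+\eps}\bigr)
\]
for all sufficiently large $n$. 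To extend this to all $Q$, use the monotonicity $N(\alpha;Q_n)\le N(\alpha;Q)\le N(\alpha;Q_{n+1})$ for $Q\in[Q_n,Q_{n+1})$, and apply the localised variance bound to the block $(Q_n,Q_{n+1}]$: its expectation is $\eta\Psi(Q_n)$ but a second Borel--Cantelli pass along the blocks (possibly using a Rademacher--Menshov-type maximal inequality) shows that its fluctuation above the mean is $O(\Psi(Q_n)^{1/2+\eps})$ for a.e. $\alpha$. Absorbing the $\eta$-slack into the error term after letting $\eta\to0$ along a sub-subsequence (or replacing $\eps$ by $\eps/2$) yields the claimed asymptotic uniformly in $Q$.

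The main obstacle is clearly the variance estimate $V(Q)\ll_{\eps}\Psi(Q)^{1+\eps}$. The arguments of \cite{KM} were tuned to produce qualitative divergence and \cite{ABH} extracted $(\log\Psi)^{-C}$ savings by iterating the KM density increment a bounded number of times; here we need iteration that loses only $\Psi^{o(1)}$ overall. This will force a quantitative re-examination of the GCD graph compression steps to prevent the accumulation of polylogarithmic slack, and it is plausible that a new combinatorial ingredient (sharper compression, or an improved structural dichotomy) is needed to reach the almost-sharp exponent.
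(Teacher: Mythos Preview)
Your proposal is essentially the same strategy as the paper's: reduce Theorem~\ref{thm:quantitative DS} to a variance bound $\int_0^1(N(\alpha;Q)-\Psi(Q))^2\dee\alpha\ll_\eps\Psi(Q)^{1+\eps}$ (together with its localised version), and deduce the almost-everywhere asymptotic via Chebyshev, Borel--Cantelli along a sparse sequence, and a maximal inequality for the intermediate $Q$.

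Two remarks on where your sketch diverges from, or is less precise than, the paper. First, in the deduction step, your geometric sequence $\Psi(Q_n)=(1+\eta)^n$ followed by ``letting $\eta\to0$'' is awkward and in fact unnecessary: once you invoke a Rademacher--Menshov maximal inequality inside each block you control the centred fluctuation directly, so there is no $\eta$-slack to absorb. The paper instead takes $Q_j$ with $\Psi(Q_j)\in[j-1,j)$ (arithmetic in $\Psi$), so monotonicity between consecutive $Q_j$ already costs only $O(1)$; it then groups $j\in[2^r,2^{r+1})$ dyadically and applies the binary decomposition $j=\sum_t b_t2^t$ together with Cauchy--Schwarz, which is exactly the Rademacher--Menshov argument written out explicitly. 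This makes the localised variance bound (on intervals $(Q_{i2^{s+1}},Q_{2^s+i2^{s+1}}]$) the only analytic input, and the summability for Borel--Cantelli drops out cleanly.

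Second, your plan for the variance bound correctly names the GCD graph machinery but underestimates what is new. The improvement over \cite{ABH} does not come from iterating the KM density increment with merely tighter bookkeeping; it requires (i) a new overlap estimate for \emph{generic} pairs $(q,r)$ based on a Legendre (inclusion--exclusion) sieve rather than the fundamental-lemma sieve, giving $\lambda(\CA_q\cap\CA_r)\le\lambda(\CA_q)\lambda(\CA_r)e^{2L_t(q,r)}(1+O(2^{\omega_t(q,r)}D^{-1}\log D))$, and (ii) a genuinely modified definition of the quality of a GCD graph, with an extra parameter $\tau$ entering as $\delta^{2+\tau}$ and an additional Euler factor $(1-p^{-1-\tau/4})^{-3}$, so that the iteration gains a factor $M^N$ (with $M$ arbitrarily large) rather than a fixed constant at each step where $f'(p)\neq g'(p)$. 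The non-generic pairs (those failing anatomical conditions on $L_t$ or $\omega_t$) are then handled by the cruder Pollington--Vaughan bound combined with the extra $M^N$ savings. Your ``sharpen each surgery to cost $\Psi^{o(1)}$'' is the right spirit, but the mechanism is this redefinition of quality, not a refinement of the existing one.
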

	LeVeque \cite{LeVeque2} showed that when $\psi$ satisfies certain more stringent conditions, the quantity on the left-hand side of \eqref{eq:QuantitativeDS} satisfies a Central Limit Theorem analogous to Fuchs' result \cite{Fuchs} for \eqref{eq:QuantitativeKhintchine}. In particular, the error term in Theorem \ref{thm:quantitative DS} cannot be $O(\Psi(Q)^{1/2})$, and so Theorem \ref{thm:quantitative DS} is sharp up to a factor of $\Psi(Q)^{o(1)}$. We have made no attempt to quantify more precisely this $\Psi(Q)^{o(1)}$ factor; as written the argument here would likely give a quantification $\Psi(Q)^{1/2+O((\log\log{\Psi(Q)})^{-c}) }$ for some $c>0$, but with a bit more care (particularly in Proposition \ref{prop:SmallPrimes}) this could surely be improved. 	
	
	\begin{rem*}
		LeVeque originally claimed a result analogous to Fuchs' theorem \cite{LeVeque1}. He corrected his claim in \cite{LeVeque2}, noticing that his proof is instead suited for the left-hand side of \eqref{eq:QuantitativeDS}. 
		
		 In Section \ref{sec:optimal}, we give a different and simple proof of the optimality of the $1/2$ in the exponent $1/2 + \epsilon$ in Theorem \ref{thm:quantitative DS}.
	\end{rem*}
	
%
%
%

\subsection*{Notation}

We shall use the letter $\lambda$ to denote the Lebesgue measure on $\R$. 

Sets will be typically denoted by capital calligraphic letters such as $\CA,\CV$ and $\CE$. A triple $G=(\CV,\CW,\CE)$ denotes a bipartite graph with vertex sets $\CV$ and $\CW$ and edge set $\CE\subseteq\CV\times\CW$. 

Given a set or an event $\CE$, we let $\mathbbm{1}_\CE$ denote its indicator function.

The letter $p$ will always denote a prime number. We also write $p^k\|n$ to mean that $p^k$ divides $n$, but $p^{k+1}$ does not divide $n$.

\section{Proof outline}

Throughout the rest of this paper, we fix a function $\psi:\N\to[0,1/2]$ such that $\Psi(Q)\rightarrow \infty$ (where $\Psi$ is as defined in \eqref{eq:PsiDef}) as $Q\rightarrow\infty$. In addition, we set
\[
N(\alpha;Q) =\#\Bigl\{(a,q)\in\Z\times \N :\,q\le Q,\ 
\bgg|\alpha-\frac{a}{q}\bgg|< \frac{\psi(q)}{q},\,\gcd(a,q)=1\Bigr\}.
\]
With this notation, the conclusion of Theorem \ref{thm:quantitative DS} is that
\[
N(\alpha;Q) =  \Psi(Q) +  O_{\eps}\bgg(\Psi(Q)^{\frac{1}{2}+\eps}\bgg) 
\]
for every fixed $\eps>0$, for almost all $\alpha\in\R$ and for all $Q$ sufficiently large in terms of $\alpha$. As a matter of fact, the function $\alpha\to N(\alpha;Q)$ is $1$-periodic, so that it suffices to prove the above estimate for almost all $\alpha\in[0,1]$.

Now, let
\begin{align*}
		\mathcal{A}_q&\coloneqq \bggg\{\alpha\in[0,1]: \bgg|\alpha-\frac{a}{q}\bgg|< \frac{\psi(q)}{q}\ \mbox{for some $a\in\Z$ coprime with $q$}\bggg\} \\ 
	&= [0,1]\cap \bigsqcup_{\substack{0 \leqslant a \leqslant q\\ \gcd(a,q)=1}} \left( \frac{a}{q} - \frac{\psi(q)}{q}, \frac{a}{q} + \frac{\psi(q)}{q} \right),
\end{align*}
where $\sqcup$ denotes a disjoint union. The fact that the above union is disjoint follows readily from our assumption that $\psi(q)\in[0,1/2]$. For the same reason, we restricted the union to $a\in[0,q]$, because otherwise the interval $(\frac{a-\psi(q)}{q},\frac{a+\psi(q)}{q})$ does not intersect $[0,1]$. We thus have
\begin{equation}
	\label{eq:N(a,Q)}
N(\alpha;Q) = \sum_{q\le Q} \un_{\CA_q}(\alpha) \quad\text{for all}\ \alpha\in[0,1],
\end{equation}
as well as
\[
\lambda (\mathcal{A}_q ) = \frac{2 \varphi(q) \psi(q)}{q}.
\]
As a consequence,
\begin{equation}
	\label{eq:N(a,Q) mean value}
\int_0^1 N(\alpha;Q)\dee\alpha = \sum_{q\le Q} \lambda(\CA_q) = \Psi(Q) .
\end{equation}

We shall deduce Theorem \ref{thm:quantitative DS} from the following bound on the variance of the random variable $[0,1]\ni\alpha\to N(\alpha;Q)$.

\begin{theorem}\label{thm:variance} Assume the above notation. Then, for every fixed $\eps>0$, we have
	\[
	\int_0^1 \bg(N(\alpha;Q)  - \Psi(Q) \bg)^2 \mathrm{d}\alpha
	\le \Psi(Q)+O_{\eps} \big(\Psi(Q)^{1+\eps}\big) \,;
	\]
	the implied constant depends at most on $\eps$.
\end{theorem}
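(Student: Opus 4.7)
The plan is to expand the variance by bilinearity. Using \eqref{eq:N(a,Q)} and \eqref{eq:N(a,Q) mean value}, one has
\[
\int_0^1 \bg(N(\alpha;Q)-\Psi(Q)\bg)^2\mathrm{d}\alpha
= \sum_{q\le Q}\lambda(\CA_q)\bg(1-\lambda(\CA_q)\bg)
+ \sum_{\substack{q_1,q_2\le Q\\ q_1\ne q_2}}\bgg(\lambda(\CA_{q_1}\cap\CA_{q_2})-\lambda(\CA_{q_1})\lambda(\CA_{q_2})\bgg).
\]
The diagonal sum is at most $\sum_{q\le Q}\lambda(\CA_q)=\Psi(Q)$, which accounts for the explicit $\Psi(Q)$ term in the statement. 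The task thus reduces to bounding the off-diagonal sum by $O_{\eps}\bg(\Psi(Q)^{1+\eps}\bg)$.

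The first ingredient is a sharp pointwise estimate for the overlap $\lambda(\CA_{q_1}\cap\CA_{q_2})$. A Pollington--Vaughan-type computation counts, for each pair $(a_1,a_2)$ of reduced numerators, whether the intervals around $a_1/q_1$ and $a_2/q_2$ intersect, and bounds the length of such an intersection. Writing $d=\gcd(q_1,q_2)$ and $L=\lcm(q_1,q_2)$, this produces an estimate of the shape
\[
\lambda(\CA_{q_1}\cap\CA_{q_2}) \le \lambda(\CA_{q_1})\lambda(\CA_{q_2})\cdot C(q_1,q_2) + (\text{lower-order boundary terms}),
\]
where the correlation factor $C(q_1,q_2)$ is an explicit multiplicative function of the prime-power structure of $d$ and tends to $1$ when $q_1,q_2$ share no anomalous prime power. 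Thus the excess $R(q_1,q_2):=\lambda(\CA_{q_1}\cap\CA_{q_2})-\lambda(\CA_{q_1})\lambda(\CA_{q_2})$ is governed by the shared prime structure of the two moduli.

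Next, I would partition the pairs $(q_1,q_2)$ into dyadic ranges of $q_1$, $q_2$, and $d$, together with a decomposition according to which prime powers ``anomalously'' divide $d$. For \emph{generic} pairs, where the correlation is spread across many primes and $C(q_1,q_2)-1$ is small on average, a direct summation using standard multiplicative estimates together with the definition of $\Psi(Q)$ should contribute only $O_{\eps}\bg(\Psi(Q)^{1+\eps}\bg)$ in total. For the remaining \emph{anomalous} pairs, whose correlation concentrates on a structured set of prime powers, the plan is to deploy the GCD-graph framework of \cite{KM}: an over-contribution from such pairs would force a weighted bipartite GCD-graph of unusually high edge density, and the iterative refinement plus compression / density-increment machinery would then either extract further structure (eventually contradicting the total measure $\Psi(Q)$) or terminate at a configuration where a direct count closes the bound.

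\textbf{Main obstacle.} The crux is obtaining the near-sharp exponent $1+\eps$, as opposed to the $1/(\log\Psi(Q))^C$-saving of \cite{ABH}. Every logarithmic gain inside the GCD-graph iteration must now be upgraded to a power-saving in $\Psi(Q)$, which forces one to run the iteration essentially to its natural limit. The most delicate regime is that of small primes (those below a threshold depending on $\Psi(Q)$), where the standard density-increment is weakest because the primes are too few to sustain many rounds of refinement; a tailored compression argument in this range — presumably the content of Proposition \ref{prop:SmallPrimes} — is what closes the gap and delivers the stated bound.
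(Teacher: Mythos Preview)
Your overall architecture --- expand the variance, strip the diagonal, then classify off-diagonal pairs into ``generic'' and ``anomalous'' and invoke the GCD-graph machinery for the hard ones --- is broadly what the paper does. But two of the key load-bearing pieces are misidentified.

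First, the overlap estimate. You invoke ``a Pollington--Vaughan-type computation'' yielding a correlation factor $C(q_1,q_2)$ close to $1$. The paper explicitly argues that neither the Pollington--Vaughan bound \eqref{eq:PollingtonVaughan} nor the fundamental-lemma refinement \eqref{eq:FundamentalBound} used in \cite{ABH} can give a power saving. The new ingredient is Lemma~\ref{lem:ovelap estimate}, a Legendre-sieve estimate producing
\[
\lambda(\CA_q\cap\CA_r)\le \lambda(\CA_q)\lambda(\CA_r)\,e^{2L_t(q,r)}\Bigl(1+O\bigl(2^{\omega_t(q,r)}\log(4D)/D\bigr)\Bigr),
\]
with $D=\max(r\psi(q),q\psi(r))/\gcd(q,r)$. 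This is what makes the generic case tractable; without it the generic contribution cannot be controlled to power-saving precision.

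Second, the decomposition and the role of GCD graphs. The correct splitting variables are $D(q,r)$ (which involves $\psi$, not just $q_1,q_2,d$) together with the anatomical quantities $L_t(q,r)$ and $\omega_t(q,r)$ of $qr/\gcd(q,r)^2$; this yields three regimes $\CE^{(1)},\CE^{(2)},\CE^{(3)}$. Crucially, the GCD-graph machinery is needed in \emph{all three} regimes, not only the anomalous ones: even for generic pairs $\CE^{(1)}$, after Lemma~\ref{lem:ovelap estimate} one must bound $\sum D(q,r)^{\eps/2-1}\lambda(\CA_q)\lambda(\CA_r)$, and this is Proposition~\ref{prop:main-prop1}, proved via the full GCD-graph iteration. ``Direct summation using standard multiplicative estimates'' does not close this. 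The anomalous regimes $\CE^{(2)},\CE^{(3)}$ require the further bilinear estimates of Propositions~\ref{prop:main-prop2}--\ref{prop:main-prop3}, which extract extra decay from the anatomical constraints via Lemmas~\ref{lem:Anatomy-L}--\ref{lem:Anatomy-omega}.

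Finally, your diagnosis of the main obstacle is off: small primes (Proposition~\ref{prop:SmallPrimes}) are a routine piece of the iteration. The genuine quantitative leverage comes from redefining the quality with exponent $2+\tau$ (so that $\tau\to 0$ yields $\eps\to 0$) and verifying that the entire KM argument survives with this sharper definition.
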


\begin{rem*}
The inequality in Theorem \ref{thm:variance} holds  for all $\Psi(Q) \ge 0$. When $\Psi(Q)\ge 1$, the expression $ \Psi(Q)+O_{\eps} \big(\Psi(Q)^{1+\eps}\big)$ is dominated by $ \Psi(Q)^{1+\eps}$; when $\Psi(Q) < 1$, it  is dominated by $\Psi(Q)$.
\end{rem*}

As with all previous works, the key to Theorem \ref{thm:variance} is to show that $\lambda(\CA_q\cap\CA_r)\lessapprox \lambda(\CA_q)\lambda(\CA_r)$ for `most' pairs $q,r$, which can be thought of as a quantitative `quasi-independence on average' of the events $\CA_q$. Typically, one first attacks this via a counting argument, which shows
\[
\frac{\lambda(\CA_q\cap\CA_r)}{\lambda(\CA_q)\lambda(\CA_r)}\le \frac{\displaystyle\sum_{\gcd(c,n)=1}w\Bigl(\frac{c}{C}\Bigr)\sum_{d|\gcd(\ell,c)}f(d)}{\displaystyle\Bigl(\frac{\phi(n)}{n}\Bigr)\Bigl(\int_0^\infty w\Bigl(\frac{t}{C}\Bigr)dt\Bigr)\Bigl(\sum_{d|\ell}\frac{f(d)}{d}\Bigr)},
\]
 for some integers $n,\ell$ which divide $qr$, length of summation $C= \max(r\psi(q),q\psi(r))/\gcd(q,r)$, continuous compactly supported weight function $w$ and multiplicative function $f$. Thus, to show the right hand side is close to 1 we wish to show that the numerator, a weighted sum of a multiplicative function, is close to its expected size. The function $f$ and weight $w$ only cause minor technical inconveniences, but more substantial complications are caused by the restriction $\gcd(c,n)=1$. Thus morally the key issue is to get a good upper bound for the number of $c\le C$ which are coprime to $n$. We want an upper bound which is as close to the expected $\phi(n)C/n$ as possible, but with good uniformity in the size of $n$ and $C$. The work of Pollington and Vaughan \cite{PV} uses a standard upper bound sieve, which essentially gives
  \begin{equation}
 \sum_{\substack{c\le C\\ \gcd(c,n)=1}}1\ll \frac{\phi(n)}{n}C\prod_{\substack{p|n\\ p\ge C}}\Bigl(1+\frac{1}{p}\Bigr).
 \label{eq:PollingtonVaughan}
 \end{equation}
 The loss factor of the product over $p\ge C$ is typically close to 1, and the arguments of \cite{KM} use this to show $\lambda(\CA_q\cap\CA_r)\ll \lambda(\CA_q)\lambda(\CA_r)$ for `most' $q,r$, which is sufficient to resolve the Duffin-Schaeffer conjecture\footnote{An average form of $ \lambda(\CA_q\cap\CA_r) \ll  \lambda(\CA_q)\lambda(\CA_r)$ together with a simple application of  the Cauchy-Schwarz inequality would give $\lambda(\limsup_q \CA_q) > 0$. By Gallagher's 0-1 law \cite{gallagher}, one can immediately obtain $\lambda(\limsup_q \CA_q) = 1$, which confirms the Duffin-Schaeffer conjecture.}, but not to give better quantitative estimates.
 
 In \cite{ABH}, a fundamental-lemma style sieve was used, giving for any choice of the parameter $u$
   \begin{equation}
 \sum_{\substack{c\le C\\ \gcd(c,n)=1}}1\le \frac{\phi(n)}{n}C(1+O(u^{-u}))\prod_{\substack{p|n\\ p\ge C^{1/u} }}\Bigl(1+\frac{1}{p}\Bigr).
 \label{eq:FundamentalBound}
 \end{equation}
 This improves upon the Pollington-Vaughan bound because it no longer loses a constant factor if $u$ is moderately large and the final product is close to 1.  By choosing $u$ appropriately and using this bound in the arguments of \cite{KM}, they were able to show $\lambda(\CA_q\cap\CA_r)\le(1+o(1)) \lambda(\CA_q)\lambda(\CA_r)$ for most $q$, $r$, which then gave the result \eqref{eq:QuantitativeDS}. Unfortunately the quantification of the $o(1)$ is severely limited by the limitations on the possible choices of the parameter $u$ to ensure that the $O(u^{-u})$ error term is small but the product over primes $p>C^{1/u}$ is still typically close to 1, and it appears that there is no version of this argument which could yield a power-saving bound in \eqref{eq:QuantitativeDS}.
 
 In our work, we instead use a Legendre sieve to get an upper bound when $q$ and $r$ have suitably generic prime factorisations. Let $n^*$ denote the $t$-smooth part of $n$. Then we have
    \[
 \sum_{\substack{c\le C\\ \gcd(c,n)=1}}1\le  \sum_{\substack{c\le C\\ \gcd(c,n^*)=1}}1=C\frac{\phi(n^*)}{n^*}+O\Bigl(2^{\#\{p|n^*\}}\Bigr).
 \]
 When $t$ is chosen appropriately, this gives a good estimate provided $q$ and $r$ have somewhat typical factorisations. In particular, this gives a good quantitative estimate for suitably `generic' $q$, $r$. We then separately handle `non-generic' pairs $q$, $r$ using \eqref{eq:PollingtonVaughan} and adapting the methods of \cite{KM} to exploit the unusual factorisations. The crucial point is that for those `non-generic' pairs, we have an extra decay (roughly speaking, either exponential decay or polynomial decay) in the summation. Thus, the cruder Pollington--Vaughan inequality (cf.~Lemma \ref{lem:PV}) for $\lambda(\CA_q\cap\CA_r)$ is sufficient for our purpose.
 
 An adoption of the above strategy would enable one to get a result similar to Theorem \ref{thm:quantitative DS}, but with the exponent $1/2$ in the error term replaced by some number in $(1/2,1)$ close to $1$. To get the full quantitative strength of Theorem \ref{thm:quantitative DS} we need to modify some of the technical definitions in \cite{KM} (most notably, the definition of `quality' of a GCD graph) and verify that the arguments of \cite{KM} still hold with these quantitatively stronger definitions.

\begin{rem*}
Hauke--Saez--Walker \cite{HVW} have recently refined the argument of \cite{ABH} to obtain an estimate $\Psi(Q)+O(\Psi(Q)\exp(-(\log{\Psi(Q)})^{1/2+o(1)}))$ for the left hand side of \eqref{eq:QuantitativeDS}, which appears to be close to the strongest possible quantification obtainable from using only the overlap estimate \eqref{eq:FundamentalBound}. Their strategy (building on the earlier work of Green--Walker \cite{Green-Walker}) also contains some additional ideas similar to the modification of the definition of quality mentioned above.
\end{rem*}
	
\section{Structure of the paper}	

The first part of the paper consists of Sections \ref{Reduce:main:tovariance}-\ref{sec:three propositions} whose goal is to reduce the proof of Theorem \ref{thm:quantitative DS} to three bilinear estimates (Propositions \ref{prop:main-prop1}-\ref{prop:main-prop3}). Specifically, in Section \ref{Reduce:main:tovariance}, we show how Theorem \ref{thm:variance} implies Theorem  \ref{thm:quantitative DS}. In Section \ref{sec:Overlap} we establish a technical estimate about the measure of the intersection of two events $\CA_q$ and $\CA_r$; this is key for our improvement over the Aistleitner--Borda--Hauke estimate \eqref{eq:QuantitativeDS}. In Section \ref{sec:anatomy}, we present two auxiliary statistical lemmas about the multiplicative `anatomy' of a random integer; these serve as the underlying cause for the additional decay  in two of the bilinear estimates of Section \ref{sec:three propositions} (Propositions \ref{prop:main-prop2} and \ref{prop:main-prop3}). Lastly, in Section \ref{sec:three propositions}, we state the three key bilinear estimates (Propositions \ref{prop:main-prop1}-\ref{prop:main-prop3}) and deduce Theorem \ref{thm:variance} from them.

\medskip

The second part of the paper consists of Sections \ref{sec:GCDgraphs}-\ref{sec:IterationStep2}. In this part, we prove Propositions \ref{prop:main-prop1}-\ref{prop:main-prop3} using the language of GCD graphs developed in \cite{KM}. In Section \ref{sec:GCDgraphs},  we present the definition of a GCD graph and of other related notions, and we present three iterative Propositions (Propositions \ref{prop:IterationStep1}-\ref{prop:SmallPrimes})  and three structural lemmas (Lemmas \ref{lem:Cosmetic-L}-\ref{lem:HighDegreeSubgraph}) about GCD graphs. In Section \ref{sec:proof-of-moments-bound}, we show how the results of Section \ref{sec:GCDgraphs} imply the three bilinear estimates of Section \ref{sec:three propositions} (Propositions \ref{prop:main-prop1}-\ref{prop:main-prop3}). In Section \ref{sec:HighDegree}, we prove Lemma \ref{lem:HighDegreeSubgraph}. In Section \ref{sec:Prep}, we collect some preliminary properties of GCD graph.  Section \ref{sec:proof-anatomy-lemma} is  dedicated to the proof of the anatomical Lemmas \ref{lem:Cosmetic-L} and \ref{lem:Cosmetic-omega}, Section \ref{sec:IterationStep1} to the proof of Proposition \ref{prop:IterationStep1}. Finally, in Sections \ref{sec:SmallPrime} and \ref{sec:IterationStep2},  we prove Propositions \ref{prop:SmallPrimes} and \ref{prop:IterationStep2}, respectively.

\section{Proof of Theorem \ref{thm:quantitative DS} assuming Theorem \ref{thm:variance}}\label{Reduce:main:tovariance}

This claimed deduction follows by a straightforward adaption of Lemma 1.5 in \cite{Harman}. Let $Q_0=0$ and, for each $j\in\N$, let 
\[
Q_j \coloneqq \max\{Q\ge0 : \Psi(Q) < j \}.
\] 
We have that $\Psi(Q_j)\in[j-1,j)$ for all $j\ge1$. Hence, if $Q\in[Q_j,Q_{j+1})$, then $\Psi(Q)=j+O(1)$, and we also have that 
\[
N(\alpha;Q_j)\le N(\alpha;Q) \le N(\alpha;Q_{j+1}).
\]
This reduces Theorem \ref{thm:quantitative DS} to proving that
\begin{equation}
	\label{eq:quantitative DS}
 N(\alpha;Q_j) = \Psi(Q_j) +  O_\eps\bgg(\Psi(Q_j)^{\frac{1}{2}+\eps}\bgg)
\end{equation}
for almost all $\alpha\in[0,1]$ and all $j$ sufficiently large in terms of $\alpha$.

For each $r\in\N$, let $\CE_r$ be the set of $\alpha\in[0,1]$ such that
\[
\bg|  N(\alpha;Q_j) - \Psi(Q_j)\bg| > 2^{(\frac{1}{2}+\eps)r}  
\quad\text{for some}\ j\in[2^r,2^{r+1}),
\]
and let $\CB=\limsup_{r\rightarrow\infty} \CE_r$ be the set of $\alpha\in[0,1]$ which lie in infinitely many $\CE_r$.  Since we also have that $\Psi(Q_j)\asymp 2^r$ whenever $j\in[2^{r-1},2^r)$, we have \eqref{eq:quantitative DS} for all $\alpha\in[0,1]\setminus \CB$ and $j$ sufficiently large in terms of $\alpha$. Thus it suffices to show that $\lambda(\CB)=0$. We will prove that 
\begin{equation}
	\label{eq:BC for quantitative DS}
		\lambda(\CE_r) \ll_\eps 1/r^2 .
\end{equation}
This relation implies $\sum_{r=1}^\infty \lambda(\CE_r)<\infty$, so the first Borel--Cantelli lemma \cite[Lemma 1.2]{Harman} then shows that $\lambda(\CB)=0$, giving Theorem \ref{thm:quantitative DS}.

Now, let us fix  $r\in\N$ and demonstrate \eqref{eq:BC for quantitative DS}. Each integer $j\in[2^r,2^{r+1})$ can be written in binary form as $j=\sum_{t=0}^r b_t2^t$ with $b_t\in\{0,1\}$ for $t<r$ and $b_r=1$. For $s=0,1,\dots,r+1$, let us define $j_s=\sum_{t\ge s}b_t2^t$ with the convention that $j_{r+1}=0$. We then have that for any $j\in [2^r,2^{r+1})$
\begin{align*}
\sum_{q\le Q_j}   \bg(  \mathbbm{1}_{\mathcal{A}_q} (\alpha)  -  \lambda(\mathcal{A}_q) \bg) 
	&= \sum_{s=0}^r	\sum_{Q_{j_{s+1}}< q\le Q_{j_s} }  \bg(  \mathbbm{1}_{\mathcal{A}_q} (\alpha)  -  \lambda(\mathcal{A}_q) \bg)  \\
	&\le (r+1)^{1/2} \bgggg(\sum_{s=0}^r \bggg|\sum_{Q_{j_{s+1}}< q\le Q_{j_s} }  \bg(  \mathbbm{1}_{\mathcal{A}_q} (\alpha)  -  \lambda(\mathcal{A}_q) \bg)\bggg|^2 \bgggg)^{1/2}
\end{align*}
by the Cauchy--Schwarz inequality. Evidently, we may restrict the summation to those numbers $s$ for which $j_s>j_{s+1}$. For such $s$ we have that $j_s=j_{s+1}+2^s$ as well as $j_{s+1}=2^{s+1}i$ with $i<2^{r-s}$. Hence, if we set 
\[
\Delta(i, s, \alpha) \coloneqq 	
\sum_{Q_{i2^{s+1}}< q\le Q_{2^s+i2^{s+1}} }  \bg(  \mathbbm{1}_{\mathcal{A}_q} (\alpha)  -  \lambda(\mathcal{A}_q) \bg) ,
\]
then we conclude that
\[
\bggg|\sum_{q\le Q_j}   \bg(  \mathbbm{1}_{\mathcal{A}_q} (\alpha)  -  \lambda(\mathcal{A}_q) \bg) \bggg|^2
\le (r+1)  \sum_{0\le s\le r} \sum_{0\le i<2^{r-s}} \bg|\Delta(i,s,\alpha) \bg|^2  
\]
for each $j\in[2^r,2^{r+1})$. We now note that the right-hand side above doesn't depend on $j$. Hence, if $\alpha\in\CE_r$, then we must have
\[
\sum_{0\le s\le r} \sum_{0\le i<2^{r-s}} \bg|\Delta(i,s,\alpha) \bg|^2  > \frac{2^{(1+2\eps)r} }{r+1}.
\]
Using Markov's inequality, we arrive at the bound
\begin{equation}
\lambda(\CE_r) \le \frac{r+1}{2^{(1+2\eps)r}} \sum_{0\le s\le r} \sum_{0\le i<2^{r-s}} \int_0^1  \bg|\Delta(i,s,\alpha) \bg|^2\dee\alpha .
\label{eq:Markov}
\end{equation}
Fix for the moment $s$ and $i$ as above, and let $Q'=Q_{i2^{s+1}}$, $Q=Q_{2^s+i2^{s+1}}$ and $\psi'(q)=\un_{q>Q'}\psi(q)$. We see that
\[
\sum_{q\le Q}\frac{\psi'(q)\phi(q)}{q}=\sum_{Q'<q\le Q}\frac{\phi(q)\psi(q)}{q}=\Psi(Q)-\Psi(Q')=2^s+O(1),
\]
since $\Psi(Q_j)=j+O(1)$ for all $j\in\Z_{\ge0}$. To estimate the right-hand side of \eqref{eq:Markov}, we apply Theorem \ref{thm:variance} with the above choice of $Q$ and $\psi'(q)$ in place of $\psi$. As a consequence,
\[
\lambda(\CE_r) \ll_\eps  \frac{r+1}{2^{(1+2\eps)r}} \sum_{0\le s\le r} \sum_{0\le i<2^{r-s}} 2^{s(1+\eps)}  \ll_\eps \frac{r+1}{2^{\eps r}} \ll_\eps \frac{1}{r^2}.
\]
This proves \eqref{eq:BC for quantitative DS}, thus completing the deduction of Theorem \ref{thm:quantitative DS} from Theorem \ref{thm:variance}.

 \section{Overlap estimates}\label{sec:Overlap}
	
	Throughout the rest of the paper, we let
	\begin{equation}
D =	D(q,r)= \frac{\max \left(  r \psi(q), q \psi(r) \right)}{\gcd (q,r)}
	\label{eq:DDef}
	\end{equation}
	for any pair of natural numbers $(q,r)$. In addition, if $t\ge1$ is a real number, then we let
	\begin{equation}
	L_t(q,r)\coloneqq \sum_{\substack{p|qr/\gcd(q,r)^2 \\ p>t}}\frac{1}{p}
	\qquad\text{and}\qquad
	\omega_t(q,r)\coloneqq \#\bggg\{p\bgg|\frac{qr}{\gcd(q,r)^2}: p\le t \bggg\}.
	\label{eq:LtwtDef}
	\end{equation}
	
	It does not seem possible to improve the overlap estimate  in \cite[Lemma 5]{ABH} for arbitrary pair of integers $q$ and $r$, so we improve the overlap estimate for generic pairs of integers.

	\begin{lemma}[Overlap estimate for generic pairs of integers] \label{lem:ovelap estimate}
		Let $q\neq r$ be two natural numbers and let $D=D(q,r)$. For any real number $t\ge1$, we have 
		\begin{equation*} 
			\lambda (\mathcal{A}_q \cap \mathcal{A}_r) \leqslant 
			\un_{D\ge1/2}\cdot 
			\lambda (\mathcal{A}_q) \lambda (\mathcal{A}_r) e^{2L_t(q,r)} 
			\bggg( 1 + O \bggg(  \frac{2^{\omega_t(q,r)}\log(4D)}{D}   \bggg)\bggg).
		\end{equation*}
	\end{lemma}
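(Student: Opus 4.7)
The plan is to expand $\lambda(\CA_q \cap \CA_r)$ as an explicit double sum over coprime fractions and then reduce to a sieve problem on a single integer parameter $c$, where a Legendre-style sieve exploits the generic factorisation of $q$ and $r$ through the truncation at $t$.

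Write $g = \gcd(q,r)$, $q_0 = q/g$, $r_0 = r/g$. Using the disjointness of the arcs in $\CA_q$ and $\CA_r$, I would start from
\begin{align*}
\lambda(\CA_q \cap \CA_r) = \sum_{\substack{0 \le a < q \\ \gcd(a,q)=1}} \sum_{\substack{0 \le b < r \\ \gcd(b,r)=1}} \lambda(I_a \cap J_b),
\end{align*}
where $I_a$ and $J_b$ are the arcs of radii $\psi(q)/q$ and $\psi(r)/r$ centred at $a/q$ and $b/r$. Parametrise by $c := a r_0 - b q_0 \in \Z$, so that $|a/q - b/r| = |c|/(g q_0 r_0)$, and the intersection length becomes the positive part of the triangle $(M - |c|)/(g q_0 r_0)$ capped at $2\min(\psi(q)/q,\psi(r)/r)$, where $M = r_0\psi(q) + q_0\psi(r) \le 2D$. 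When $D < 1/2$, the only admissible $c$ is $c = 0$, forcing $q_0 \mid a$ and $r_0 \mid b$, which together with $\gcd(a,q) = \gcd(b,r) = 1$ is impossible for $q \ne r$; this accounts for the indicator $\un_{D \ge 1/2}$.

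For each $c$ with $\gcd(c, q_0 r_0) = 1$, the equation $a r_0 - b q_0 = c$ has $g$ solutions $(a,b) \in [0,q) \times [0,r)$, and the residual conditions $\gcd(a,g) = \gcd(b,g) = 1$ telescope with $\gcd(a,q_0) = \gcd(b,r_0) = 1$ (the latter being forced by $\gcd(c, q_0 r_0) = 1$) to recover the density $\phi(q)\phi(r)/(qr)$. The key step is then to estimate the Legendre-type sum
\begin{align*}
\sum_{\substack{|c| < M \\ \gcd(c, q_0 r_0) = 1}} w(c) \le \sum_{d \mid n^*} \mu(d) \sum_{\substack{|c| < M \\ d \mid c}} w(c),
\end{align*}
where $n^*$ is the $t$-smooth part of $q_0 r_0$ and $w$ is the capped triangle. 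Evaluating each inner sum by a Riemann-sum approximation as $\tfrac{1}{d} \int_{-M}^{M} w\, dc + O(\sup |w|)$ and summing over $d \mid n^*$ yields main term $(\phi(n^*)/n^*) \int w$ and error $O(2^{\omega_t(q,r)} \sup |w|)$. Converting $\phi(n^*)/n^*$ back to $\phi(q_0 r_0)/(q_0 r_0)$ via the identity $\phi(n^*)/n^* = (\phi(q_0 r_0)/(q_0 r_0)) \prod_{p \mid q_0 r_0,\, p > t} (1 - 1/p)^{-1}$ and the crude bound $(1-1/p)^{-1} \le e^{2/p}$ produces the factor $e^{2L_t(q,r)}$.

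Reassembling with the density factors recovered in the previous step produces the main term $\lambda(\CA_q)\lambda(\CA_r)\, e^{2L_t(q,r)}$, and the sieve error converts to the claimed relative error $O(2^{\omega_t(q,r)} \log(4D)/D)$ after bounding $\sup |w|/\int w$ against $1/D$ and using a Mertens-type uniform bound to absorb the remaining arithmetic losses into the $\log(4D)$ factor. The main obstacle I anticipate is the careful prime-by-prime treatment of coprimality with $g$: since $q_0$ and $r_0$ need not be coprime to $g$, CRT does not split the conditions cleanly, and one must verify that the combined density exactly reconstructs $\phi(q)\phi(r)/(qr)$ without parasitic multiplicative factors, while also ensuring that the bound holds uniformly in the parameter $t$.
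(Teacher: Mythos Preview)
Your overall strategy matches the paper's: relax the coprimality condition on $c$ to the $t$-smooth part and apply a Legendre sieve, producing the factor $e^{2L_t}$ in the main term and $2^{\omega_t}$ in the error. You have also correctly located the difficult step.

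However, the assertion that for each admissible $c$ the coprimality conditions on $(a,b)$ ``recover the density $\phi(q)\phi(r)/(qr)$'' is false, and this is not a cosmetic point. Consider a prime $p$ with $\nu_p(q)=\nu_p(r)\ge 1$. Then $p\mid g$ but $p\nmid q_0 r_0$, so the condition $\gcd(c,q_0r_0)=1$ says nothing about $c$ modulo $p$. Along the solution line $a=a_0+kq_0$, $b=b_0+kr_0$ ($k$ ranging over $g$ consecutive integers), both $q_0$ and $r_0$ are invertible mod $p$, so the two bad residues $k\equiv -a_0 q_0^{-1}$ and $k\equiv -b_0 r_0^{-1}$ coincide precisely when $p\mid c$. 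Hence the local density of good $k$ is $1-1/p$ if $p\mid c$ and $1-2/p$ if $p\nmid c$, never $(1-1/p)^2$. The correct count for fixed $c$ therefore carries a $c$-dependent multiplicative weight.

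This is exactly what the paper's proof handles by invoking the finer decomposition $q r=\ell^2 m n$ (primes with equal valuation contribute to $\ell$; primes with unequal valuation to $m$ and $n$) and the exact formula from \cite{ABH}, in which the weight appears as $\prod_{p\mid\gcd(\ell,c),\,p>2}(1+1/(p-2))$. The paper expands this as a divisor sum $\sum_{j\mid\gcd(\ell,c)}f(j)$, interchanges with the sum over $c$, and applies Euler--Maclaurin to each progression $c\equiv 0\pmod{aj}$. In the main term $\sum_{j\mid\ell}f(j)/j$ is absorbed harmlessly, but in the error term one is left with $\sum_{j\mid\ell}f(j)=\prod_{p\mid\ell,\,2<p\le 2D}(1+1/(p-2))\ll\log(4D)$. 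So the $\log(4D)$ is not a bookkeeping loss that can be ``absorbed'' at the end; it is the genuine cost of the $c$-dependent weight you have suppressed. Your proposal as written does not contain the mechanism (the $\ell,m,n$ split, or an equivalent) to produce it, and without it the error bound you would obtain has no logarithm and hence no room to accommodate this weight.
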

	
	\begin{proof} If $D<1/2$, we know from \cite[Section 3]{PV} that $\mathcal{A}_q \cap \mathcal{A}_r=\emptyset$. So let us assume that $D\ge1/2$. Let $\nu_p(\cdot)$ denote the $p$-adic valuation function, let
		\[
		\ell= \prod_{p:\ \nu_p(q)=\nu_p(r)} p^{\nu_p(q)},
		\quad
		m=\prod_{p:\ \nu_p(q)\neq \nu_p(r)} p^{\min\{\nu_p(q),\nu_p(r)\}},
		\quad
		n=\prod_{p:\ \nu_p(q)\neq \nu_p(r)} p^{\max\{\nu_p(q),\nu_p(r)\}},
		\]
		and let
		\[
		w(y) = \begin{cases}
			2\delta&\text{if}\ 0\le y\le \Delta-\delta,\\
			\Delta+\delta-y &\text{if}\ \Delta-\delta<y\le \Delta+\delta,\\
			0 &\text{otherwise},
		\end{cases}
		\]
		with
		\[
		\delta=\min\bggg\{\frac{\psi(q)}{q},\frac{\psi(r)}{r}\bggg\}
		\quad\text{and}\quad
		\Delta=\max\bggg\{\frac{\psi(q)}{q},\frac{\psi(r)}{r}\bggg\} .
		\]
		Finally, let $s=\un_{2|\ell}$. Then, arguing as in the proof of Lemma 5 of \cite{ABH}, we have 
		\[
		\lambda (\mathcal{A}_q \cap \mathcal{A}_r)
		= 2^{s+1} \phi(m)\frac{\phi(\ell)^2}{\ell} \prod_{\substack{p|\ell \\ p>2}}\bigg(1-\frac{1}{(p-1)^2}\bigg)
		\sum_{\substack{c\ge 1\\ \gcd(c,n)=1}} w\bgg(\frac{2^sc}{\ell n}\bgg) \prod_{\substack{p|\gcd(\ell,c) \\ p>2}} \bggg(1+\frac{1}{p-2}\bggg) .
		\]
		Since $w$ is supported on $[0,\Delta+\delta]\subseteq[0,2\Delta]$, we must have $c\le 2\Delta\ell n=2D$. In particular, we may restrict the last product to primes $p\le 2D$. 
		In addition, we may replace the condition $\gcd(c,n)=1$ by the condition $\gcd(c,n^*)=1$, where $n^*$ denotes the $t$-smooth part of $n$, at the cost of replacing the exact expression for $\lambda(\CA_q\cap\CA_r)$ by an upper bound. We detect the condition that $\gcd(c,n^*)=1$ using M\"obius inversion. 
		
		In conclusion, the above discussion implies that
	\[
	\lambda (\mathcal{A}_q \cap \mathcal{A}_r)
	\le  2^{s+1} \phi(m)\frac{\phi(\ell)^2}{\ell} \prod_{\substack{p|\ell \\ p>2}}\bigg(1-\frac{1}{(p-1)^2}\bigg)
	\sum_{a|n^*}\mu(a) 
	\sum_{\substack{c\ge 1 \\ a|c}} w\bgg(\frac{2^sc}{\ell n}\bgg) \prod_{\substack{p|\gcd(\ell,c) \\ 2<p\le 2D}} \bggg(1+\frac{1}{p-2}\bggg) 
	\]
	with $\mu$ denoting the M\"obius function throughout this proof. If we let $c=ab$, then $\gcd(\ell,c)=\gcd(\ell,b)$ because $a|n^*$, and $n^*$ and $\ell$ are co-prime. Moreover, note that
	\[
	\prod_{\substack{p|k \\ 2<p\le 2D}}\bggg(1+\frac{1}{p-2}\bggg) = \sum_{j|k}f(j),
	\]
	where $f$ is the unique multiplicative function that is supported on square-free integers and satisfies $f(p)=\un_{2<p\le 2D}/(p-2)$. 
	Hence,
	\begin{align*}
		\lambda (\mathcal{A}_q \cap \mathcal{A}_r)
		&\le  2^{s+1} \phi(m)\frac{\phi(\ell)^2}{\ell} \prod_{\substack{p|\ell \\ p>2}}\bigg(1-\frac{1}{(p-1)^2}\bigg)
		\sum_{a|n^*}\mu(a) \sum_{j|\ell} f(j) 
		\sum_{i\ge 1} w\bgg(i\cdot \frac{2^saj}{\ell n}\bgg)  ,
	\end{align*}
	where we set $b=ij$. Uniformly for $\varrho>0$, the Euler--Maclaurin formula\footnote{In \cite{dk-book}, the Euler--Maclaurin formula is stated for continuously differentiable functions, but it can be easily generalized to continuous functions that are piecewise differentiable. This requires Theorem 7.35 in \cite{Apostol}.} \cite[Theorem 1.10]{dk-book} implies 
	\begin{equation}
		\label{eq:w partial summation}
	\sum_{i\ge 1} w(\varrho i) = \int_0^\infty w(\varrho t)\dee t +\int_0^\infty \varrho w'(\varrho t)\{t\}\dee t
	= 2\delta\Delta \varrho^{-1} + O(\delta). 
		\end{equation}
	Therefore,
		\begin{align*}
		\lambda (\mathcal{A}_q \cap \mathcal{A}_r)
		&\le  4\delta\Delta \ell n \phi(m)\frac{\phi(\ell)^2}{\ell} \prod_{\substack{p|\ell \\ p>2}}\bigg(1-\frac{1}{(p-1)^2}\bigg)
		\sum_{a|n^*} \frac{\mu(a)}{a} \sum_{j|\ell} \frac{f(j)}{j}  \\
		&\quad + O\bggg( \delta \phi(m)\frac{\phi(\ell)^2}{\ell} \prod_{\substack{p|\ell \\ p>2}}\bigg(1-\frac{1}{(p-1)^2}\bigg)
		\sum_{a|n^*} \mu^2(a) \sum_{j|\ell} f(j) \bggg).
	\end{align*}
	We have 
	\[
	\sum_{a|n^*} \frac{\mu(a)}{a} = \frac{\phi(n^*)}{n^*},\quad 
	\sum_{a|n^*} \mu^2(a) = 2^{\#\{p|n^*\}}  = 2^{\omega_t(q,r)},
	\]
	as well as
	\[
	\sum_{j|\ell}\frac{f(j)}{j} \le \prod_{\substack{p|\ell \\ p>2}} \bggg(1-\frac{1}{(p-1)^2}\bggg)^{-1} ,\quad
	\sum_{j|\ell} f(j) =  \prod_{\substack{p|\ell \\ 2<p\le 2D}} \bggg(1+\frac{1}{p-2}\bggg)\ll \log(4D) .
	\]
	Since we also have that $D=\Delta \ell n$ and $4\delta\Delta \phi(\ell)^2\phi(m)\phi(n)=\lambda(\CA_q)\lambda(\CA_r)$, we conclude that
	\[
		\lambda (\mathcal{A}_q \cap \mathcal{A}_r)
		\le  \lambda(\CA_q)\lambda(\CA_r)  \cdot \frac{\phi(n^*)/n^*}{\phi(n)/n} \cdot 
		\bggg(1+ O\bggg( \frac{2^{\omega_t(q,r)} \log(4D)}{D}   \bggg).
	\]
	In order to complete the proof, note that
	\[
	 \frac{\phi(n^*)/n^*}{\phi(n)/n} = \prod_{p|n,\ p>t}\bggg(1+\frac{1}{p-1}\bggg)
	 \le e^{2L_t(q,r)},
	\]
	where we used the inequality $1+1/(y-1)\le e^{2/y}$ for $y\ge2$. 
	\end{proof}

	For non-generic pairs of integers, we use the following estimate.
	
	\begin{lemma}[Pollington--Vaughan \cite{PV}]\label{lem:PV}
	If $q,r$ and $D$ are as in Lemma \ref{lem:ovelap estimate}, then we have
		\begin{equation*} 
			\lambda(\mathcal{A}_q \cap \mathcal{A}_r) \ll \un_{D\ge1/2}\cdot \lambda(\mathcal{A}_q) \lambda(\mathcal{A}_r) e^{L_D(q,r)}  .
		\end{equation*} 
	\end{lemma}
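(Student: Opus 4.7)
My plan is to follow the same decomposition as in the proof of Lemma \ref{lem:ovelap estimate}, replacing the Legendre-style sieve (which passes to the $t$-smooth part of $n$) by the stronger small-sieve estimate \eqref{eq:PollingtonVaughan} applied directly to the coprimality constraint $\gcd(c,n)=1$. The case $D<1/2$ is immediate from \cite[Section 3]{PV}, so I would restrict attention to $D\ge 1/2$.

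Keeping the notation $\ell,m,n,w,\delta,\Delta,s$ from the proof of Lemma \ref{lem:ovelap estimate}, the same computation yields the identity
\begin{equation*}
\lambda(\CA_q\cap\CA_r) = 2^{s+1}\phi(m)\frac{\phi(\ell)^2}{\ell}\prod_{\substack{p|\ell\\ p>2}}\bgg(1-\frac{1}{(p-1)^2}\bgg)\sum_{\substack{c\ge 1\\ \gcd(c,n)=1}} w\bgg(\frac{2^sc}{\ell n}\bgg)\prod_{\substack{p|\gcd(\ell,c)\\ p>2}}\bggg(1+\frac{1}{p-2}\bggg),
\end{equation*}
with $c$ confined to the interval $[0,2D]$ by the support of $w$. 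I would then expand the last product as $\sum_{j|\gcd(\ell,c)}f(j)$ with $f$ the multiplicative function from the proof of Lemma \ref{lem:ovelap estimate}, substitute $c=jc'$ (noting $\gcd(j,n)=1$ since $j|\ell$ and $\gcd(\ell,n)=1$), handle the weight $w$ by the trivial bound $w\le 2\delta$ on an interval of length $\le 2\Delta$ supplemented by the Euler--Maclaurin argument of \eqref{eq:w partial summation}, and bound the remaining sum $\sum_{c'\le O(D/j),\,\gcd(c',n)=1}1$ via the Pollington--Vaughan sieve inequality \eqref{eq:PollingtonVaughan}. This produces a factor $(\phi(n)/n)\cdot O(D/j)$ together with a loss factor $\prod_{p|n,\,p\ge D/j}(1+1/p)$.

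Summing the resulting bound over $j|\ell$ using $\sum_{j|\ell} f(j)/j\le \prod_{p|\ell,\,p>2}(1-1/(p-1)^2)^{-1}$ collapses the Euler-product prefactors, and the identity $4\delta\Delta\phi(\ell)^2\phi(m)\phi(n) = \lambda(\CA_q)\lambda(\CA_r)$ assembles the main term. Since the prime divisors of $n$ are exactly those of $qr/\gcd(q,r)^2$, the loss factor $\prod_{p|n,\,p\ge D}(1+1/p)$ is $\le e^{L_D(q,r)}$ by the inequality $1+1/p\le e^{1/p}$ and the definition of $L_D(q,r)$ in \eqref{eq:LtwtDef}, which gives the claimed bound. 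The main technical nuisance is the treatment of divisors $j$ of $\ell$ for which $D/j<1$, where \eqref{eq:PollingtonVaughan} is vacuous; but such $j$ contribute at most one term each to the inner sum, and after summation against $f(j)/j$ they can be absorbed into the final bound, just as in the original argument of \cite{PV}.
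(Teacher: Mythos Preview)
The paper does not give its own proof of this lemma: it is simply stated with attribution to Pollington--Vaughan \cite{PV}, so there is nothing to compare against at the level of argument. Your reconstruction is essentially the Pollington--Vaughan argument, routed through the decomposition already set up in the proof of Lemma \ref{lem:ovelap estimate}, and the overall strategy is sound.

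One point deserves slightly more care than you give it. After writing $c=jc'$ and applying \eqref{eq:PollingtonVaughan} to the sum over $c'\le O(D/j)$, the loss factor you obtain is $\prod_{p\mid n,\,p\ge D/j}(1+1/p)$, which depends on $j$, not $\prod_{p\mid n,\,p\ge D}(1+1/p)$. You only flag the extreme case $D/j<1$, but the intermediate range where $j$ is a nontrivial power of $D$ also inflates the loss factor. This is harmless because for $j\le D^{1/2}$ one has $\prod_{D/j\le p\le D}(1+1/p)\ll 1$ by Mertens, while for $j>D^{1/2}$ the weight $f(j)/j=\prod_{p\mid j}1/(p(p-2))$ decays fast enough to absorb any polylogarithmic loss; but this should be said rather than left implicit. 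With that addition your argument goes through and recovers the cited bound.
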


\section{Auxiliary anatomical lemmas}\label{sec:anatomy}

\begin{lemma}[Bounds on multiplicative functions]\label{lem:Tenenbaum}
	Let $k\in\N$ and write $\tau_k$ for the $k$-th divisor function. If $f$ is a multiplicative function such that $0\le f\le \tau_k$, then
	\[
	\sum_{n\le x} f(n) \ll_k x \cdot \exp\Big\{\sum_{p\le x}\frac{f(p)-1}{p}\Big\} .
	\]
\end{lemma}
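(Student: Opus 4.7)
The plan is to recognize this as a form of the classical Hall--Tenenbaum / Shiu bound on mean values of non-negative multiplicative functions. First I would apply Mertens' theorem to rewrite the target: since $\sum_{p\le x}1/p=\log\log x+O(1)$, the estimate is equivalent to
\[
\sum_{n\le x} f(n) \;\ll_k\; \frac{x}{\log x}\,\exp\Bigl\{\sum_{p\le x}\frac{f(p)}{p}\Bigr\}.
\]
I would then obtain this in two steps: (a) invoke the standard Hall--Tenenbaum estimate $\sum_{n\le x}f(n)\ll\frac{x}{\log x}\sum_{n\le x}f(n)/n$, valid for non-negative multiplicative $f$ with $\sum_{p\le y}f(p)\log p\ll y$ and $\sum_{p}\sum_{\nu\ge2}f(p^\nu)\log(p^\nu)/p^\nu<\infty$; and (b) dominate the resulting sum by the truncated Euler product $\sum_{n\le x}f(n)/n\le\prod_{p\le x}\sum_{\nu\ge0}f(p^\nu)/p^\nu$, which holds since every $n\le x$ has all its prime factors at most $x$.

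The next step is to verify the hypotheses using $f\le\tau_k$. The identity $\tau_k(p^\nu)=\binom{k+\nu-1}{\nu}\le k^\nu$ (from $\prod_{i=1}^\nu(1+(k-1)/i)\le k^\nu$) gives $f(p^\nu)\le k^\nu$. This controls the higher-prime-power contributions in both (a) and (b): indeed $\sum_{p\le y}f(p)\log p\le k\sum_{p\le y}\log p\ll_k y$ by Chebyshev, the series $\sum_p\sum_{\nu\ge 2}k^\nu\log(p^\nu)/p^\nu$ converges, and $\sum_{\nu\ge0}f(p^\nu)/p^\nu=1+f(p)/p+O_k(1/p^2)$. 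Combining these gives $\log\prod_{p\le x}\sum_{\nu\ge0}f(p^\nu)/p^\nu=\sum_{p\le x}f(p)/p+O_k(1)$, and stringing (a) and (b) together with this estimate yields the claim.

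The main obstacle is the Hall--Tenenbaum step (a), which is where the crucial $(\log x)^{-1}$ saving appears (it is sharp already for $f=\tau_k$ itself). For a self-contained derivation, I would start from the identity
\[
(\log x)\sum_{n\le x}f(n)=\sum_{n\le x}f(n)\log(x/n)+\sum_{n\le x}f(n)\log n,
\]
bound the first sum by $x\sum_{n\le x}f(n)/n$ via $\log(x/n)\le x/n-1$, and expand the second via $\log n=\sum_{p^a\|n}a\log p$ together with multiplicativity of $f$ into $\sum_{p^a\le x}a(\log p)f(p^a)\sum_{m\le x/p^a,\,p\nmid m}f(m)$; bounding the inner sum by $S(x/p^a)$ and iterating closes the recursion, with all higher-prime-power losses absorbed by the crude inequality $f(p^\nu)\le k^\nu$.
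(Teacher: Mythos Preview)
Your proposal is correct: you give a self-contained sketch of the standard Hall--Tenenbaum/Shiu argument, and the verification of the hypotheses via $\tau_k(p^\nu)\le k^\nu$ is sound. The paper itself does not prove this lemma at all---it simply cites it as Theorem~14.2 of Koukoulopoulos's textbook \emph{The distribution of prime numbers}---so your write-up is strictly more detailed than what the paper provides, and is essentially the proof one would find behind that citation.
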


\begin{proof}
	This is \cite[Theorem 14.2, p. 145]{dk-book}.
\end{proof}

\begin{lemma}[Few numbers with many prime factors]\label{lem:Anatomy-L}
	Fix $C\ge1$. For $x,t,s\ge1$, we have the uniform estimate
	\[
	\#\bigg\{n\le x: \sum_{\substack{p|n\\  p>t}}\frac{1}{p} \ge  \frac{1}{s} \bigg\} \ll_C xe^{-Ct/s} .
	\]
\end{lemma}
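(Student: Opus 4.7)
The plan is to use a Chernoff-style exponential-moment argument, exploiting that the exponential of $\sum_{p\mid n,\,p>t}1/p$ is multiplicative in $n$, and then apply Lemma \ref{lem:Tenenbaum} to sum it.

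Concretely, I would first set $\lambda=Ct$ and introduce the multiplicative function $g$ defined by $g(p^j)=e^{\lambda/p}$ when $p>t$ and $g(p^j)=1$ when $p\leqslant t$, so that
\[
g(n)=\prod_{\substack{p\mid n\\ p>t}}e^{\lambda/p}=\exp\Big(\lambda\sum_{\substack{p\mid n\\ p>t}}1/p\Big).
\]
Then by Markov's inequality applied to $g$,
\[
\#\bggg\{n\le x:\sum_{\substack{p\mid n\\ p>t}}\frac{1}{p}\ge\frac{1}{s}\bggg\}
\le e^{-\lambda/s}\sum_{n\le x}g(n)=e^{-Ct/s}\sum_{n\le x}g(n).
\]

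The key point is the choice $\lambda=Ct$ (rather than the more naive $\lambda=Ct/s$): since every prime $p>t$ appearing in $g$ satisfies $\lambda/p=Ct/p<C$, we obtain $g(p)<e^C$ uniformly. Taking $k=\lceil e^C\rceil+1$, this gives $g\leqslant\tau_k$ on every prime power (the inequality $g(p^j)=e^{\lambda/p}\le k\le \tau_k(p^j)$ is immediate). Hence Lemma~\ref{lem:Tenenbaum} applies and yields
\[
\sum_{n\le x}g(n)\ll_C x\cdot\exp\bggg(\sum_{t<p\le x}\frac{e^{Ct/p}-1}{p}\bggg).
\]
To finish, I would use the elementary bound $e^y-1\leqslant y\,e^C$ for $y\in[0,C]$ together with $Ct/p\le C$ on the range of summation to get
\[
\sum_{t<p\le x}\frac{e^{Ct/p}-1}{p}\le e^C\sum_{p>t}\frac{Ct}{p^2}\le Ce^C\cdot t\cdot\frac{1}{t}=Ce^C=O_C(1),
\]
where I used $\sum_{p>t}1/p^2\le\int_t^\infty du/u^2=1/t$ valid for all $t\ge1$. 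Substituting back gives the claimed $\ll_C xe^{-Ct/s}$.

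The step that needs the most care is the selection of $\lambda$, since it must simultaneously (i) be large enough that $e^{-\lambda/s}$ produces the required decay $e^{-Ct/s}$, (ii) keep $g$ bounded by some $\tau_k$ with $k$ depending only on $C$, and (iii) keep the prime sum $\sum(g(p)-1)/p$ uniformly $O_C(1)$ independently of $t$ and $x$. All three constraints are simultaneously satisfied precisely because every prime counted is automatically $>t$, turning $\lambda/p$ into a quantity bounded by the fixed constant $C$; this is what makes $\lambda=Ct$ the natural (and essentially forced) choice.
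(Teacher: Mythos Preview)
Your proof is correct and follows essentially the same approach as the paper: introduce the multiplicative weight $f(n)=\prod_{p\mid n}e^{\mathbbm{1}_{p>t}Ct/p}$, apply Markov's inequality, and invoke Lemma~\ref{lem:Tenenbaum} using that $f(p)-1=O_C(\mathbbm{1}_{p>t}\,t/p)$. You have simply spelled out the verification of the hypotheses and the bound on the prime sum that the paper leaves implicit; the one minor quibble is that the comparison $\sum_{p>t}1/p^2\le\int_t^\infty du/u^2$ is not literally an integral-test inequality as written, but the needed estimate $\sum_{p>t}1/p^2=O(1/t)$ is of course immediate.
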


\begin{proof}
The quantity in question is
\[
\le e^{-Ct/s} \sum_{n\le x}f(n)
\]
with $f(n)=\prod_{p|n} e^{\un_{p>t} Ct/p}$. In particular, $f(p)=1+O_C(\un_{p>t}t/p)$. Hence we may use Lemma \ref{lem:Tenenbaum} to complete the proof. 
\end{proof}

\begin{lemma}[Few numbers with many prime factors]\label{lem:Anatomy-omega}
	Fix $C\ge1$ and $\kappa>0$. For $x, t \ge1$, we have the uniform estimate
	\[
	\#\bgg\{n\le x: \#\{p|n:p\le t\} \ge \kappa\log t \bgg\} \ll_{C,\, \kappa} x \cdot t^{-C} .
	\]
\end{lemma}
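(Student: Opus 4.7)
My plan is to apply an exponential Markov-type inequality, converting the cardinality bound into a sum of a multiplicative function that can be controlled by Lemma \ref{lem:Tenenbaum}. This parallels the proof of Lemma \ref{lem:Anatomy-L}, but with the parameter appearing in the exponent of $\omega$ rather than multiplicatively in $L_t$.

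First, I would fix a parameter $z>1$, to be chosen at the end in terms of $C$ and $\kappa$, and introduce the multiplicative function
\[
f(n) := z^{\#\{p\mid n:\, p\le t\}},
\]
so that $f(p^k)=z$ for $p\le t$ and $f(p^k)=1$ for $p>t$. In particular $f(n)\le \lceil z\rceil^{\omega(n)}\le \tau_{\lceil z\rceil}(n)$, so the hypothesis of Lemma \ref{lem:Tenenbaum} is met.

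Second, I would note the pointwise lower bound $f(n)\ge z^{\kappa \log t}=t^{\kappa\log z}$ on the set of interest, so that
\[
\#\bg\{n\le x:\#\{p\mid n: p\le t\}\ge \kappa\log t\bg\}\le t^{-\kappa\log z}\sum_{n\le x}f(n).
\]
Applying Lemma \ref{lem:Tenenbaum} together with Mertens' theorem yields
\[
\sum_{n\le x}f(n)\ll_z x\exp\bggg\{(z-1)\sum_{p\le t}\frac{1}{p}\bggg\}\ll_z x\,(\log(t+2))^{z-1}.
\]

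Finally, I would choose $z=e^{(C+1)/\kappa}$, so that $\kappa\log z=C+1$. Since $(\log(t+2))^{z-1}\ll_{C,\kappa} t$ for all $t\ge 1$ (the implicit constant now being allowed to depend on the fixed $z$), the two factors combine to give the desired bound $\ll_{C,\kappa} x\cdot t^{-C}$. I do not expect a genuine obstacle here; the only minor cosmetic point is that for $t$ bounded by a constant depending on $C$ and $\kappa$, one has $t^{-C}\gtrsim 1$, so the trivial bound by $x$ already suffices and can be folded into the implicit constant.
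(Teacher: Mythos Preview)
Your proposal is correct and follows essentially the same approach as the paper: both apply exponential Markov with the multiplicative function $f(n)=z^{\#\{p\mid n:\,p\le t\}}$ for $z=e^{(C+1)/\kappa}$ and then invoke Lemma~\ref{lem:Tenenbaum}. You simply spell out the Mertens step and the absorption of the $(\log t)^{z-1}$ factor into the saved power of $t$, which the paper leaves implicit.
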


\begin{proof}
	The quantity in question is
	\[
	\le t^{-C-1} \sum_{n\le x}f(n)
	\]
	with $f(n)=\prod_{p|n} e^{ \un_{p\le t} \frac{C+1}{\kappa}}$. Using Lemma \ref{lem:Tenenbaum} completes the proof. 
\end{proof}

	\section{Reduction to three propositions}\label{sec:three propositions}

	In this section, we  reduce the proof of Theorem  \ref{thm:variance} (and hence Theorem \ref{thm:quantitative DS}) to the following three propositions. In their statements, recall the definitions of $D(q,r)$, $L_t(q,r)$ and $\omega_t(q,r)$ given in the beginning of Section \ref{sec:Overlap}.

	\begin{proposition}
		\label{prop:main-prop1}
		Fix $\eps \in (0, 1)$. Let $\psi:\N\to\R_{\ge0}$ and $y\ge1$. We then have the uniform estimate
		\[
		\mathop{\sum\sum}_{\substack{(q,r)\in[1,Q]^2 \\ D(q,r)\le y}} \frac{\psi(q)\phi(q)}{q}\cdot \frac{\psi(r)\phi(r)}{r}  \ll_\eps y^{1-\eps} \Psi(Q)^{1+\eps} .
		\]
	\end{proposition}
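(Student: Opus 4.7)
Since this is the first of the three bilinear estimates and the absence of the Pollington--Vaughan polar factors $e^{L_t}, 2^{\omega_t}$ makes it the technically simplest, I'd prove it by combining a dyadic decomposition with the iterative GCD-graph framework introduced in Section \ref{sec:GCDgraphs}.

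\textbf{Step 1 (dyadic decomposition).} I'd dyadically split the sum according to the value $D(q,r)\in[D_0,2D_0)$ for $D_0$ a power of $2$ with $D_0\leq y$. Since the exponent $1-\eps>0$, the geometric series $\sum_{D_0\le y}D_0^{1-\eps}\ll y^{1-\eps}$, so it suffices to prove, uniformly in $D_0\leq y$,
\[
\mathop{\sum\sum}_{\substack{(q,r)\in[1,Q]^2\\ D(q,r)\in[D_0,2D_0)}} \frac{\psi(q)\phi(q)}{q}\cdot\frac{\psi(r)\phi(r)}{r}\ \ll_\eps\ D_0^{1-\eps}\,\Psi(Q)^{1+\eps}.
\]
A further dyadic partition over the individual magnitudes of $q$, $r$, $\gcd(q,r)$, $\psi(q)\phi(q)/q$ and $\psi(r)\phi(r)/r$ reduces matters to $(\log\Psi(Q))^{O(1)}$ sub-pieces in each of which all these quantities lie in fixed dyadic intervals; the polylogarithmic loss is absorbed by $\Psi(Q)^\eps$.

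\textbf{Step 2 (GCD-graph encoding).} For each such sub-piece, I'd view the contributing pairs $(q,r)$ as the edges of a bipartite GCD graph $G=(\CV,\CW,\CE)$ in the sense of Section \ref{sec:GCDgraphs}: vertices sit in the specified dyadic ranges of $q$ and $r$, and edges encode the prescribed values of $\gcd(q,r)$, $\psi(q)$, $\psi(r)$ and $D(q,r)\in[D_0,2D_0)$. Up to dyadic constants the sub-sum equals the total edge weight of $G$, so our target becomes an edge-count estimate whose tightness is measured by the \emph{quality} of $G$.

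\textbf{Step 3 (iterative quality increment).} I'd then apply Proposition \ref{prop:IterationStep1} iteratively: at each step either the target bound is achieved directly for the current $G$, or $G$ is replaced by a subgraph of strictly greater quality at the cost of a controlled dyadic factor. Since quality is uniformly bounded above, the iteration terminates in $O(\log\Psi(Q))$ steps, and the terminal graph satisfies regularity hypotheses (bounded degrees, controlled small-prime content) strong enough that the desired bound follows from a direct counting argument using the defining constraint $D(q,r)\in[D_0,2D_0)$.

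\textbf{Main obstacle.} The principal technical issue is setting up the quality function of a GCD graph so that the iteration efficiently exploits the ``small $D$'' constraint $D(q,r)\leq y$ and delivers the exponent $1-\eps$ on $D_0$. This is precisely where the paper's refinement of the quality definition from \cite{KM} is crucial: with the cruder KM definition the iteration yields only a bound of the form $\Psi(Q)^{1+\eps}$ with essentially no $y$-dependence, whereas the refined definition captures the $D_0^{1-\eps}$ savings throughout each iteration step. Verifying that the iterative propositions of Section \ref{sec:GCDgraphs} remain valid under the stronger quality definition is the key input, and once that is in hand the deduction of Proposition \ref{prop:main-prop1} is mostly bookkeeping of the dyadic parameters.
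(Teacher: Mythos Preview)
Your proposal has a genuine gap in Step~1 and is vague at the decisive point in Step~3.

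\medskip

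\textbf{The dyadic loss in Step~1 cannot be absorbed.} You claim that dyadic decomposition over $q$, $r$, $\gcd(q,r)$, $\psi(q)\phi(q)/q$, $\psi(r)\phi(r)/r$ produces $(\log\Psi(Q))^{O(1)}$ sub-pieces. This is false: the number of dyadic ranges for $q$ alone is $\asymp\log Q$, and there is no relation between $Q$ and $\Psi(Q)$ in the hypotheses of the proposition. (Take $\psi$ supported on a single integer near $Q$; then $\Psi(Q)\asymp 1$ while $\log Q$ is arbitrarily large.) So the polylogarithmic loss you incur is $(\log Q)^{O(1)}$, which need not be $\le \Psi(Q)^\eps$. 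The paper sidesteps this entirely: it performs \emph{no} dyadic decomposition. Instead it builds the weight directly into the graph by taking $\mu(q)=\psi(q)\phi(q)/q$ as the measure on a single GCD graph $G=(\mu,\CS,\CS,\CE,\emptyset,f_\emptyset,g_\emptyset)$ with $\CS=[1,Q]\cap\Z$ and $\CE=\{(q,r):D(q,r)\le y\}$. Then $q(G)=\mu(\CE)^{2+\tau}\mu(\CS)^{-2-2\tau}$, so the target bound $\mu(\CE)\ll_\eps y^{1-\eps}\Psi(Q)^{1+\eps}$ (with $\eps=\tau/(2+\tau)$) is equivalent to the single estimate $q(G)\ll_\tau y^2$.

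\medskip

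\textbf{The endgame is missing.} Your Step~3 says the terminal graph ``satisfies regularity hypotheses strong enough that the desired bound follows from a direct counting argument'', but this is where the actual content lies and you do not supply it. In the paper, after applying Proposition~\ref{prop:SmallPrimes}, iterating Propositions~\ref{prop:IterationStep1}--\ref{prop:IterationStep2}, and finishing with Lemma~\ref{lem:HighDegreeSubgraph}, one reaches $G'$ with $\CR(G')=\emptyset$ and $q(G')\gg_\tau q(G)$. The crucial consequence of $\CR(G')=\emptyset$ is that $\gcd(v,w)=\gcd(a,b)$ is \emph{constant} on all edges of $G'$, where $a=\prod_{p\in\CP'}p^{f'(p)}$ and $b=\prod_{p\in\CP'}p^{g'(p)}$. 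The bound $q(G')\ll y^2$ then comes from a specific device: pick the largest vertex $w_0\in\CW'$, define $v_{\max}(w)$ and $w_{\max}(v)$, pass to the sub-edge-set $\CE''=\{(v,w)\in\CE':(v,w_0)\in\CE'\}$ (which by the high-degree property carries $\ge(\delta')^2/4$ of the edge mass), and then bound $\sum_{(v,w)\in\CE'}1/(v_{\max}(w)w_0)\le 1/(ab)$ by summing $v'\le v_{\max}(bw')/a$ and $w'\le w_0/b$. Without this step there is no mechanism linking the combinatorics of $G'$ back to the parameter $y$.

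\medskip

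Two smaller points: the iteration terminates because $\CR(G)$ strictly shrinks at each application of Proposition~\ref{prop:IterationStep1} or \ref{prop:IterationStep2}, not because quality is bounded above (Proposition~\ref{prop:IterationStep2} only guarantees $q(G')\ge q(G)$, no strict increase). And your characterisation of the old KM quality as giving ``essentially no $y$-dependence'' is off --- KM would give $y^{2/10}\Psi(Q)^{18/10}$ from $\delta^{10}$; the point of the $\delta^{2+\tau}$ refinement is to push the exponent pair arbitrarily close to $(1,1)$.
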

	
		\begin{proposition}
		\label{prop:main-prop2}
		Fix $\eps\in (0, 1)$ and $C\ge1$. Let $\psi:\N\to\R_{\ge0}$ and $y,t,s\ge1$. We then have the uniform estimate
		\[
		\mathop{\sum\sum}_{\substack{(q,r)\in[1,Q]^2 \\ D(q,r)\le y,\ L_t(q,r)\ge 1/s}} \frac{\psi(q)\phi(q)}{q}\cdot \frac{\psi(r)\phi(r)}{r}  \ll_{\eps, \,C} 
			e^{-Ct/s} y^{1-\eps} \Psi(Q)^{1+\eps} .
		\]
	\end{proposition}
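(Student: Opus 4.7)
The plan is to follow the same GCD-graph framework used to prove Proposition \ref{prop:main-prop1}, incorporating the additional constraint $L_t(q,r)\ge 1/s$ via an appeal to Lemma \ref{lem:Anatomy-L}. A natural first attempt is the Markov-type reduction: since any prime dividing $qr/\gcd(q,r)^2$ must divide $qr$, we have $L_t(q,r)\le L_t^*(q)+L_t^*(r)$ where $L_t^*(n)\coloneqq\sum_{p\mid n,\,p>t}1/p$, and hence for every $\mu>0$,
\[
\un_{L_t(q,r)\ge 1/s}\le e^{-\mu/s}e^{\mu L_t(q,r)}\le e^{-\mu/s}g_\mu(q)g_\mu(r),\qquad g_\mu(n)\coloneqq\prod_{p\mid n,\,p>t}e^{\mu/p}.
\]

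Substituting this pointwise inequality into the sum $T$ defining Proposition \ref{prop:main-prop2} and invoking Proposition \ref{prop:main-prop1} with the modified density $\widetilde{\psi}_\mu(q)\coloneqq\psi(q)g_\mu(q)$ yields
\[
T\le e^{-\mu/s}\cdot O_\eps\bigl(y^{1-\eps}\widetilde{\Psi}_\mu(Q)^{1+\eps}\bigr),\qquad \widetilde{\Psi}_\mu(Q)\coloneqq 2\sum_{q\le Q}\frac{\psi(q)g_\mu(q)\phi(q)}{q}.
\]
Taking $\mu=Ct$ immediately produces the desired factor $e^{-Ct/s}$, so the task reduces to bounding $\widetilde{\Psi}_\mu(Q)\ll_{C,\eps}\Psi(Q)$. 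I would approach this via a dyadic decomposition of the sum according to the size of $L_t^*(q)$: for $k\ge 0$, on the band $\{q\le Q:L_t^*(q)\in[k\alpha,(k+1)\alpha)\}$ with a suitable step $\alpha=\alpha(C,\eps)$, one has $g_\mu\le e^{\mu(k+1)\alpha}$, while Lemma \ref{lem:Anatomy-L} (with an arbitrarily large implicit constant $C_1$) gives cardinality $\ll Q\,e^{-C_1 k\alpha t}$. For $C_1$ large relative to $\mu/t=C$, the Anatomy decay in $k$ outweighs the growth $e^{\mu(k+1)\alpha}$ of $g_\mu$, and Lemma \ref{lem:Tenenbaum} handles the generic range $L_t^*(q)=O(1/t)$ contributing to $k=0$.

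The main obstacle is that this direct reduction is insufficient when $\psi$ is concentrated on integers with large $L_t^*$, in which case $\Psi(Q)\ll Q$: the dyadic argument then produces a bound of the shape $\widetilde{\Psi}_\mu(Q)\ll\Psi(Q)^{1-\kappa}Q^\kappa$ that loses a factor $(Q/\Psi(Q))^\kappa$ which cannot be absorbed into the slack $\Psi(Q)^\eps$ for arbitrary $\psi$. The resolution, to be carried out through the GCD-graph machinery developed later in the paper, is to encode the constraint $L_t(q,r)\ge 1/s$ directly into the definition of the graph's ``quality'' (so that having $L_t^*$ large on the vertex sets contributes to the quality itself), and to propagate the factor $e^{-Ct/s}$ through the iteration. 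The decay at each iteration step still originates from Lemma \ref{lem:Anatomy-L}, but is now applied to the vertex sets of the GCD graph, where $\psi$ is effectively supported uniformly and the issue above does not arise.
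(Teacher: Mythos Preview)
Your first attempt (multiplying by $g_\mu$ and reducing to Proposition~\ref{prop:main-prop1}) fails for exactly the reason you identify, so the entire weight of the proposal rests on the final paragraph. That paragraph does not supply a proof; it gestures at a mechanism (``encode $L_t\ge 1/s$ into the quality so that large $L_t^*$ contributes to quality'') that is both unspecified and different from what is actually needed.

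The paper does \emph{not} modify the quality functional. Instead, it exploits a freedom already built into the GCD-graph machinery: the constant $M$ in Propositions~\ref{prop:IterationStep1}--\ref{prop:IterationStep2}. Choosing $M=e^{4C}$, each iteration of Proposition~\ref{prop:IterationStep1} that produces a prime $p$ with $f'(p)\neq g'(p)$ gains a factor $M$ in quality. Letting $N$ be the number of such primes accumulated, one splits into two cases. If $N\ge t/(4s)$, the quality gain $M^N\ge e^{Ct/s}$ alone supplies the required decay, and one finishes exactly as in Proposition~\ref{prop:main-prop1}. If $N<t/(4s)$, one must show the anatomical condition survives the iteration: the primes added to $\CP'$ with $f'=g'$ don't affect $L_t(v,w)$ at all, and the $\le N<t/(4s)$ primes with $f'\neq g'$ contribute at most $N/t<1/(4s)$ to the sum. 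The remaining danger is the primes in $\CR(G)$ not yet processed; these are removed by Lemma~\ref{lem:Cosmetic-L} (the ``cosmetic'' step), which costs only a factor of $2$ in quality. After this, the condition $\sum_{p\mid v'w',\,p>t}1/p\ge 1/(4s)$ persists on the final edge set, where $v=av'$, $w=bw'$, and \emph{now} Lemma~\ref{lem:Anatomy-L} can legitimately be applied to count $v'$ (or $w'$) in the final summation, yielding the factor $e^{-Ct/s}$.

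The two ingredients your sketch lacks are: (i) the $M$-parameter / two-case dichotomy, which is what actually converts ``many asymmetric primes'' into decay; and (ii) Lemma~\ref{lem:Cosmetic-L}, without which the constraint $L_t\ge 1/s$ could be entirely absorbed by primes in $\CR(G)$ and lost during the Proposition~\ref{prop:IterationStep2} iterations. Your suggestion of baking $L_t$ into the quality would require reproving all of Section~\ref{sec:GCDgraphs} for a new functional, and it is not clear such a functional would be monotone under the iteration steps.
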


	\begin{proposition}
	\label{prop:main-prop3}
	Fix $\eps \in (0, 1),\kappa>0$ and $C\ge1$. Let $\psi:\N\to\R_{\ge0}$ and $y,t\ge1$. We then have the uniform estimate
	\[
	\mathop{\sum\sum}_{\substack{(q,r)\in[1,Q]^2 \\ D(q,r)\le y,\ \omega_t(q,r)\ge \kappa\log t}} \frac{\psi(q)\phi(q)}{q}\cdot \frac{\psi(r)\phi(r)}{r}  \ll_{\eps,\,\kappa,\,C} 
	t^{-C} y^{1-\eps} \Psi(Q)^{1+\eps} .
	\]
\end{proposition}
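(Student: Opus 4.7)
The proof should follow the same template as that of Proposition \ref{prop:main-prop2}, with Lemma \ref{lem:Anatomy-omega} replacing Lemma \ref{lem:Anatomy-L}. The idea is to soften the hard constraint $\omega_t(q,r)\ge\kappa\log t$ into a smooth multiplicative majorant via Rankin's trick, and then reduce the assertion to a weighted version of Proposition \ref{prop:main-prop1}.

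Concretely, set $\lambda=(C+1)/\kappa$. Since $\omega_t(q,r)$ equals the number of primes $p\le t$ dividing $qr/\gcd(q,r)^2$, the Rankin bound $\un_{X\ge a}\le e^{\lambda(X-a)}$ (valid for $\lambda>0$) yields
\[
\un_{\omega_t(q,r)\ge\kappa\log t}\;\le\;t^{-(C+1)}\cdot f\!\left(\frac{qr}{\gcd(q,r)^2}\right),
\]
where $f$ is the multiplicative function with $f(p^a)=e^{\lambda}$ if $p\le t$ and $f(p^a)=1$ otherwise. Substituting this into the sum of Proposition \ref{prop:main-prop3} reduces the assertion to the weighted bilinear estimate
\[
\mathop{\sum\sum}_{\substack{(q,r)\in[1,Q]^2\\ D(q,r)\le y}}\frac{\psi(q)\phi(q)}{q}\cdot\frac{\psi(r)\phi(r)}{r}\cdot f\!\left(\frac{qr}{\gcd(q,r)^2}\right)\;\ll_{\eps,C,\kappa}\;t\cdot y^{1-\eps}\Psi(Q)^{1+\eps},
\]
whereupon the factor of $t$ is absorbed against the $t^{-(C+1)}$ to yield the claimed $t^{-C}y^{1-\eps}\Psi(Q)^{1+\eps}$.

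To prove this weighted analogue, the plan is to carry the factor $f(qr/\gcd(q,r)^2)$ through the argument for Proposition \ref{prop:main-prop1}. Since $f(p)-1=O_{C,\kappa}(1)$ for $p\le t$ and vanishes otherwise, Lemma \ref{lem:Tenenbaum} yields the crude bound $\sum_{n\le x}f(n)\ll_{C,\kappa}x(\log t)^{e^\lambda-1}$ for every $x\ge1$, and every multiplicative-function sum encountered in the proof of Proposition \ref{prop:main-prop1} should admit an analogous majorization. The cumulative inflation from inserting $f$ is therefore $(\log t)^{O_{C,\kappa}(1)}$, which is $\le t$ once $t$ exceeds a threshold depending on $C$ and $\kappa$; the bounded range of $t$ is harmless. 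The principal obstacle is precisely this last step: carefully verifying that the proof of Proposition \ref{prop:main-prop1} accommodates a multiplicative weight on $qr/\gcd(q,r)^2$ with only a polylogarithmic cost. This should be straightforward because that proof presumably reduces to sums over the coprime parameters $q'=q/\gcd(q,r)$ and $r'=r/\gcd(q,r)$, so the weight $f(q'r')$ factors cleanly over the coprime pair and is controlled in average by Lemma \ref{lem:Tenenbaum}; no new structural difficulty beyond bookkeeping is anticipated.
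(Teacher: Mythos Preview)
Your Rankin reduction is clean, but the plan to ``carry the weight $f(qr/\gcd(q,r)^2)$ through the argument for Proposition~\ref{prop:main-prop1}'' rests on a false premise about how that proof is structured. The proof of Proposition~\ref{prop:main-prop1} does \emph{not} decompose the sum by $d=\gcd(q,r)$ into coprime parameters $q',r'$; it runs through the GCD-graph machinery (Propositions~\ref{prop:IterationStep1}--\ref{prop:SmallPrimes} and Lemma~\ref{lem:HighDegreeSubgraph}), all of which are formulated for the \emph{product} measure $\mu(v)\mu(w)$ on edges. Your weight $e^{\lambda\omega_t(q,r)}$ is not a product of vertex weights, so it cannot be absorbed into $\mu$; and the obvious majorant $e^{\lambda\omega_t(q,r)}\le e^{\lambda\omega_t(q)}e^{\lambda\omega_t(r)}$ fails to help, since then $\tilde\Psi(Q)=\sum_q \mu(q)e^{\lambda\omega_t(q)}$ need not be $\ll (\log t)^{O(1)}\Psi(Q)$ for arbitrary $\psi$ (take $\psi$ supported on a single $t$-smooth integer). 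Re-engineering the GCD-graph lemmas to accept a genuine edge weight would require reproving essentially all of Sections~\ref{sec:GCDgraphs}--\ref{sec:IterationStep2}, which is far beyond ``bookkeeping''.

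The paper's proof instead mirrors that of Proposition~\ref{prop:main-prop2}: it runs the GCD-graph iteration with $M=e^{4C/\kappa}$, stops after the $\CR^\flat$-phase, and splits according to whether $N=\#\{p\in\CP^{(1)}:p>C_6,\ f^{(1)}(p)\neq g^{(1)}(p)\}$ exceeds $0.25\kappa\log t$. In Case~1 the quality gain $M^N\ge t^C$ already suffices. In Case~2, Lemma~\ref{lem:Cosmetic-omega} excises the contribution of primes in $\CR(G^{(1)})$ to $\omega_t$, and the bound on $N$ excises that of primes in $\CP^{(1)}$; only then do the surviving primes land in $v'$ or $w'$ separately (with $\gcd(v',w')=1$), at which point Lemma~\ref{lem:Anatomy-omega} --- your Rankin bound, applied to a \emph{single} integer --- finishes the job. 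The cosmetic lemma and the case split on $N$ are precisely the devices that decouple the edge-dependent anatomical constraint into vertex-wise ones; your proposal skips both, and that is the gap.
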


	\begin{proof}[Deduction of Theorem \ref{thm:variance} from Propositions \ref{prop:main-prop1}-\ref{prop:main-prop3}]
		Using \eqref{eq:N(a,Q)} and \eqref{eq:N(a,Q) mean value}, we find that
		\begin{align*}
			0\le \int_0^1 \bg(N(\alpha;Q)-\Psi(Q)\bg)^2\dee\alpha	
			&= \int_0^1\bgg(\sum_{q\le Q}\un_{\CA_q}\bgg)^2\dee\alpha- \Psi(Q)^2 \\
			&=  \mathop{\sum\sum}_{q,r \leqslant Q} \lambda( \mathcal{A}_q \cap \mathcal{A}_r) - \Psi(Q)^2 .
		\end{align*}
		So in order to prove Theorem \ref{thm:variance},  it suffices to establish the upper bound 
		\[
			\mathop{\sum\sum}_{ q,r \leqslant Q} \lambda( \mathcal{A}_q \cap \mathcal{A}_r) \leqslant  \Psi(Q)^2 + \Psi(Q)+ O_{\eps} \bg( \Psi(Q)^{1+\eps}\bg).
		\]
		The terms with $q=r$ contribute 
		\[
		\sum_{q\le Q}\lambda(\CA_q)=\Psi(Q). 
		\]
		It thus remains to show that 
		\begin{align}\label{eq:goal}
				\mathop{\sum\sum}_{ \substack{q,r \leqslant Q \\  q\neq r}} 
					\lambda( \mathcal{A}_q \cap \mathcal{A}_r) \leqslant  \Psi(Q)^2 +  O_{\eps} \bg( \Psi(Q)^{1+\eps}\bg).
		\end{align}
		
		From the first part of Lemma \ref{lem:ovelap estimate}, we may assume that $D(q,r)\ge1/2$. To this end, let 
		\[
		\CE = \{(q,r)\in[1,Q]^2: q\neq r,\ D(q,r)\ge1/2\},
		\]
		so that the sum in \eqref{eq:goal} is over all pairs $(q,r)\in\CE$. Writing $D=D(q,r)$, we split $\CE$ into the following three subsets:
		\begin{align*}
			\CE^{(1)} &= \bgg\{ (q,r)\in\CE : L_{D^2}(q,r)\le \frac{1}{D},\ \omega_{D^2}(q,r) \le \frac{\eps}{4} \log(2D) \bgg\},\\
			\CE^{(2)} &= \bgg\{ (q,r) \in \CE : L_{D^2}(q,r)> \frac{1}{D} \bgg\},\\
			\CE^{(3)} & = \bgg\{ (q,r)\in \CE:  L_{D^2}(q,r)\le  \frac{1}{D},
			\ \omega_{D^2}(q,r) > \frac{\eps}{4} \log(2D)  \bgg\} .
		\end{align*}
	We claim the following estimates:
	\begin{align}
	\mathop{\sum\sum}_{(q,r)\in\CE^{(1)}} \lambda( \mathcal{A}_q \cap \mathcal{A}_r)
		&\le \Psi(Q)^2+O_\eps\bg(\Psi(Q)^{1+\eps}\bg) \,;
						\label{eq:E1 contribution}  \\
	\mathop{\sum\sum}_{(q,r)\in\CE^{(2)}} \lambda( \mathcal{A}_q \cap \mathcal{A}_r) 
		&\ll_\eps\Psi(Q)^{1+\eps} \,; 
					\label{eq:E2 contribution} \\
	\mathop{\sum\sum}_{(q,r)\in\CE^{(3)}} \lambda( \mathcal{A}_q \cap \mathcal{A}_r)
		&\ll_\eps\Psi(Q)^{1+\eps}.
	\label{eq:E3 contribution} 
	\end{align}	
	These bounds suffice to complete the proof, since they readily imply
	\[
				\mathop{\sum\sum}_{ \substack{q,r \leqslant Q \\  q\neq r}} 
						\lambda( \mathcal{A}_q \cap \mathcal{A}_r)=\sum_{i=1}^3\mathop{\sum\sum}_{ \substack{(q,r) \in \CE^{(i)}}} 
						\lambda( \mathcal{A}_q \cap \mathcal{A}_r)\le \Psi(Q)^2+O_\eps\Bigl(\Psi(Q)^{1+\eps}\Bigr).
	\]
	This gives \eqref{eq:goal}, and so completes the proof of Theorem \ref{thm:variance}. 
	
	Let us now prove \eqref{eq:E1 contribution}. Here the summation is over pairs $(q,r)\in\CE^{(1)}$. We then use Lemma \ref{lem:ovelap estimate} with $t=D^2$ to find that
	\[
		\lambda (\mathcal{A}_q \cap \mathcal{A}_r) \leqslant
		 \lambda (\mathcal{A}_q) \lambda (\mathcal{A}_r) \bg( 1 + O\bg(D^{-1+\eps/2}\bg)\bg) .
	\]
	Therefore
	\[
			\mathop{\sum\sum}_{(q,r)\in\CE^{(1)}} \lambda( \mathcal{A}_q \cap \mathcal{A}_r)
		\le M+O_\eps(R),
	\]
	where 
	\[
	M\coloneqq \mathop{\sum\sum}_{(q,r)\in\CE^{(1)}} \lambda( \mathcal{A}_q) \lambda(\mathcal{A}_r) 
	\quad\text{and}\quad
	R\coloneqq \mathop{\sum\sum}_{(q,r)\in\CE^{(1)}} D(q,r)^{\eps/2-1} \lambda( \mathcal{A}_q) \lambda(\mathcal{A}_r).
	\] 
	For the main term $M$ we simply notice that it is $\le \Psi(Q)^2$. Thus we focus on the $O_\eps(R)$ term. Let $I$ be the smallest non-negative integer such that $2^I\ge\Psi(Q)$. We then split $R$ according to the size of $D(q,r)$ as
	\[
	R = \sum_{i=0}^{I} R_i+R'
	\]
	where $R_i$ is the part of the sum with $D(q,r)\in[2^{i-1},2^i)$, and $R'$ is the part with $D(q,r)\ge 2^I\ge  \Psi(Q)$. Now, we trivially have that
	\[
	R'\le \Psi(Q)^{\eps/2-1} \mathop{\sum\sum}_{(q,r)\in\CE^{(1)}} \lambda( \mathcal{A}_q) \lambda(\mathcal{A}_r)\le \Psi(Q)^{1+\eps/2}.
	\]
	For each $i\in\{0,1,\dots,I\}$, Proposition \ref{prop:main-prop1} implies that
	\[
	R_i\ll_\eps (2^i)^{\eps/2-1} \cdot (2^i)^{1-\eps} \Psi(Q)^{1+\eps} = 2^{-i\eps/2} \Psi(Q)^{1+\eps}.
	\]
	We thus conclude that $R\ll_\eps \Psi(Q)^{1+\eps}$, whence \eqref{eq:E1 contribution} follows. 
	
	Let us now prove \eqref{eq:E2 contribution}. For $(q,r)\in\CE^{(2)}$ we use Lemma \ref{lem:PV} to find that 
	\[
	\mathop{\sum\sum}_{(q,r)\in\CE^{(2)}} \lambda( \mathcal{A}_q \cap \mathcal{A}_r)
	\ll 		\mathop{\sum\sum}_{(q,r)\in\CE^{(2)}} \lambda( \mathcal{A}_q ) \lambda( \mathcal{A}_r)
	e^{L_D(q,r)} .
	\]
	Note that if $D\in[2^{i-1},2^i)$ and $(q,r)\in\CE^{(2)}$, then $L_{4^{i-1}}(q,r)>1/2^i$. We may thus write
	\[
	\CE^{(2)} = \bigcup_{i=0}^\infty \bigcup_{j=i}^\infty \CE^{(2)}_{i,j},
	\]
	where $\CE^{(2)}_{i,i}$ is the set of pairs $(q,r)\in \CE^{(2)}$ with $D\in[2^{i-1},2^i)$ and $L_{4^{i-1}}(q,r)\le 1$, and $\CE^{(2)}_{i,j}$ with $j>i$ is  the set of pairs $(q,r)\in \CE^{(2)}$ with $D\in[2^{i-1},2^i)$ and  $L_{4^{j-1}}(q,r)\le 1<L_{4^{j-2}}(q,r)$. In particular, if $(q,r)\in\CE^{(2)}_{i,j}$, then 
	\[
	L_D(q,r)\le L_{2^{i-1}}(q,r) \le \sum_{2^{i-1}<p\le 4^{j-1}}\frac{1}{p} + L_{4^{j-1}}(q,r)\le \log\frac{j+1}{i+1} + O(1),
	\]
	and thus
	\begin{equation}
		\label{eq:E2 contribution - almost there}
	\mathop{\sum\sum}_{(q,r)\in\CE^{(2)}} \lambda( \mathcal{A}_q \cap \mathcal{A}_r)
	\ll 	\mathop{\sum\sum}_{j\ge i\ge 0}\frac{j+1}{i+1} 
		\mathop{\sum\sum}_{(q,r)\in\CE_{i,j}^{(2)}} \lambda( \mathcal{A}_q ) \lambda( \mathcal{A}_r).
	\end{equation}
For each $i\ge0$, Proposition \ref{prop:main-prop2} applied with $y=2^{i}$, $t=\max\{1,4^{i-1}\}$, $s=2^i$ and $C=8$ implies that\footnote{Since any prime is larger than $1$, we have $L_{4^{i-1}}(q,r)=L_1(q,r)$ when $i<1$, so we may indeed take $t=\max\{1,4^{i-1}\}$.}
	\[
		\mathop{\sum\sum}_{(q,r)\in\CE_{i,i}^{(2)}} \lambda( \mathcal{A}_q ) \lambda( \mathcal{A}_r)
		\ll_\eps e^{-2^i} \Psi(Q)^{1+\eps}.
	\]
 In addition, if $j>i\ge0 $, then Proposition \ref{prop:main-prop2} applied with $y=2^i$, $t=\max\{1,4^{j-2}\}$, $s=1$ and $C=17$ implies that
	\[
	\mathop{\sum\sum}_{(q,r)\in\CE_{i,j}^{(2)}}   \lambda( \mathcal{A}_q ) \lambda( \mathcal{A}_r)
	\ll_\eps e^{-4^j} \Psi(Q)^{1+\eps}.
	\]
Inserting the two above bounds into \eqref{eq:E2 contribution - almost there} completes the proof of \eqref{eq:E2 contribution}.

It remains to prove \eqref{eq:E3 contribution}. We note that for pairs $(q,r)\in\CE^{(3)}$ we have 
\[
L_D(q,r)\le \sum_{D< p\le D^2}\frac{1}{p}+L_{D^2}(q,r)\ll 1.
\]
Therefore, using Lemma \ref{lem:PV}, we find that
\[
\mathop{\sum\sum}_{(q,r)\in\CE^{(3)}} \lambda( \mathcal{A}_q \cap \mathcal{A}_r)
\ll 		\mathop{\sum\sum}_{(q,r)\in\CE^{(3)}} \lambda( \mathcal{A}_q ) \lambda( \mathcal{A}_r) .
\]
Note that if $D\in[2^{i-1},2^i)$ and $\omega_{D^2}(q,r)>0.25\eps\log(2D)$, then $\omega_{4^i}(q,r)>0.1\eps\log(4^i)$. Therefore,
\begin{equation}
\mathop{\sum\sum}_{(q,r)\in\CE^{(3)}} \lambda( \mathcal{A}_q \cap \mathcal{A}_r)
\ll \sum_{i\ge0} 
\mathop{\sum\sum}_{(q,r)\in\CE_i^{(3)}} \lambda( \mathcal{A}_q ) \lambda( \mathcal{A}_r),
	\label{eq:E3 contribution - almost there}
\end{equation}
where $\CE^{(3)}_i$ is the set of pairs $(q,r)\in \CE^{(3)}$ with $D\in[2^{i-1},2^{i})$ and $\omega_{4^i}(q,r)>0.1\eps\log(4^i)$. Proposition \ref{prop:main-prop3} applied with $y=2^i$, $t=4^i$, $\kappa=\eps/10$, and $C=2$ implies that
\[
\mathop{\sum\sum}_{(q,r)\in\CE_i^{(3)}}   \lambda( \mathcal{A}_q ) \lambda( \mathcal{A}_r)
\ll_\eps 2^{-i} \Psi(Q)^{1+\eps}
\]
for all $i\ge0$. Inserting this bound into \eqref{eq:E3 contribution - almost there} completes the proof of \eqref{eq:E3 contribution}, and thus of Theorem \ref{thm:variance}.
\end{proof}

	\section{GCD graphs}\label{sec:GCDgraphs}

	In order to prove Propositions \ref{prop:main-prop1}-\ref{prop:main-prop3}, we shall use the method of GCD graphs developed in \cite{KM} with some important modifications.  The definition of a  GCD graph is almost the same as in \cite{KM}:

	\begin{dfn}[GCD graph]\label{gcd graph dfn}
		Let $G$ be a septuple $(\mu,\CV,\CW,\CE,\CP,f,g)$ such that:	
		\begin{enumerate}
			\item $\mu$ is a measure on $\N$ such that $\mu(n)<\infty$ for all $n\in\N$; we extend to $\N^2$ by letting
			\[
			\mu(\CN):=\sum_{(n_1,n_2)\in\CN}\mu(n_1)\mu(n_2) 
			\quad\text{for}\quad\CN\subseteq\N^2;	
			\]	
			\item $\CV$ and $\CW$ are finite sets of positive integers;
			\item $\CE\subseteq\CV\times \CW$, that is to say $(\CV,\CW,\CE)$ is a bipartite graph;
			\item $\CP$ is a set of primes;
			\item $f$ and $g$ are functions from $\CP$ to $\Z_{\ge0}$ such that for all $p\in\CP$ we have:
			\begin{enumerate}
				\item $p^{f(p)}|v$ for all $v\in\CV$, and $p^{g(p)}|w$ for all $w\in\CW$;
				\item if $(v,w)\in\CE$, then $p^{\min\{f(p),g(p)\}}\|\gcd(v,w)$;
			\end{enumerate}
		\end{enumerate}			
		We then call $G$ a (bipartite) \emph{GCD graph} with \emph{sets of vertices} $(\CV,\CW)$, \emph{set of edges} $\CE$ and \emph{multiplicative data} $(\CP,f,g)$. We will also refer to $\CP$ as the \emph{set of primes} of $G$. If $\CP=\emptyset$, we say that $G$ has \emph{trivial} set of primes and we view $f=f_\emptyset$ and $g=g_\emptyset$ as two copies of the empty function from $\emptyset$ to $\mathbb{Z}_{\ge 0}$.
	\end{dfn}
	
	\begin{rem*}
	In \cite{KM}, the definition of a GCD graph included one additional condition: if $f(p)\neq g(p)$, then we required that $p^{f(p)}\|v$ for all $v\in\CV$, and $p^{g(p)}\|w$ for all $w\in\CW$. Even though the GCD graphs we will consider do satisfy this condition, we no longer need to include it in the definition due to a small strengthening of Proposition \ref{prop:IterationStep1}, which allows us to specify the $p$-adic valuation of all vertices in $G'$ and for all primes $p\in\CP'\setminus \CP$. 
	\end{rem*}

 To each GCD graph $G$, we associate a {\it quality} $q(G)$. This will be defined differently to \cite{KM}, and it will depend on some parameter $\tau\in(0,\frac{1}{100})$, which we can take to be arbitrarily small.
	
	\begin{dfn}
		For a GCD graph $G = (\mu,\mathcal{V},\mathcal{W},\mathcal{E},\mathcal{P},f,g)$, we define: 
		\begin{enumerate}[label=(\alph*)]
			\item The \emph{edge density} is defined by
			$$
			\delta(G) = \frac{\mu(\mathcal{E})}{\mu(\mathcal{V}) \mu(\mathcal{W})},
			$$
			provided that $\mu(\mathcal{V}) \mu(\mathcal{W}) \neq 0$.
			If $\mu(\mathcal{V}) \mu(\mathcal{W}) = 0$, we define $\delta(G)$ to be $0$.
			
			\item The \emph{quality} is defined by
			\[
			q(G):=\delta^{2+\tau}\mu(\mathcal{V})\mu(\mathcal{W})\prod_{p\in \mathcal{P}}
			p^{|f(p)-g(p)|}
			\bggg(1-\frac{\mathbbm{1}_{f(p)=g(p)\geqslant 1}}{p}\bggg)^{-2}  
			\bggg(1- \frac{1}{p^{1+\frac{\tau}{4}}} \bggg)^{-3}  .
			\]
		\end{enumerate}		
\end{dfn}

\begin{dfn}[Non-trivial GCD graph]\label{non-trivial gcd graph dfn}
Let $G=(\mu,\CV,\CW,\CE,\CP,f,g)$. We say that $G$ is {\it non-trivial} if $\mu(\CE)>0$. 
\end{dfn}

\begin{dfn}[GCD subgraph]
	Let $G=(\mu,\CV,\CW,\CE,\CP,f,g)$ and $G'=(\mu',\CV',\CW',\CE',\CP',f',g')$ be two GCD graphs. We say that $G'$ is a \emph{GCD subgraph} of $G$ if:
	\[
	\mu'=\mu,\quad \CV'\subseteq\CV,\quad \CW'\subseteq\CW,\quad \CE'\subseteq\CE,
	\quad \CP'\supseteq\CP, \quad f'\big|_{\CP}=f,\quad
	g'\big|_{\CP}=g.
	\]
	 Lastly, we say that $G'$ is a {\it non-trivial GCD subgraph} of $G$ if $\mu(\CE')>0$, that is to say $G'$ is non-trivial as a GCD graph.
\end{dfn}

\begin{dfn}[maximal GCD graph]\label{maximal GCD graph}
	Let $G = (\mu,\mathcal{V},\mathcal{W},\mathcal{E},\mathcal{P},f,g)$ be a GCD graph. We say that $G$ is \emph{maximal} or, more precisely, that $G$ is $(\CP,f,g)$-{\it maximal} if for every GCD subgraph $G'$ of $G$ with the same multiplicative data $(\CP,f,g)$ we have that $q(G')\le q(G)$.
\end{dfn}

\begin{rem*}
For any given GCD graph $G= (\mu,\mathcal{V},\mathcal{W},\mathcal{E},\mathcal{P},f,g)$, since both the sets $\mathcal{V}$ and $\mathcal{W}$ are finite, there exists a  GCD subgraph $G'$ of $G$ with the same multiplicative data  $(\CP,f,g)$, such that $G'$ is $(\CP,f,g)$-maximal. To see this, just list all  subgraphs of  $G$ with the same multiplicative data, and pick out one with the largest possible quality.  We will use this fact several times in the later sections without mentioning it again.
	\end{rem*}

We recall from \cite{KM} the special vertex sets $\CV_{p^k}$ and GCD graphs $G_{p^k,p^\ell}$ formed by restricted elements by their prime-power divisibilty.

\begin{dfn}
\label{def:special graphs}	
Let $p$ be a prime number, and let $k,\ell\in\Z_{\ge0}$.
\begin{enumerate}
\item If $\CV$ is a set of integers, we set
	\[
	\CV_{p^k}=\{v\in\CV:p^k\|v\}.
	\]
Here we have the understanding that $\CV_{p^0}$ denotes the set of $v\in\CV$ that are coprime to $p$.

\item Let $G=(\CV,\CW,\CE)$ be a bipartite graph. If $\CV'\subseteq\CV$ and $\CW'\subseteq\CW$, we define
\[
\CE(\CV',\CW'):=\CE\cap(\CV'\times\CW').
\]
We write for brevity
\[
\CE_{p^k,p^\ell}:=\CE(\CV_{p^k},\CW_{p^\ell}) .
\]

\item Let $G=(\mu,\CV,\CW,\CE,\CP,f,g)$ be a GCD graph such that $p\notin \CP$. We then define the septuple
\[
G_{p^k,p^\ell}=(\mu,\CV_{p^k},\CW_{p^\ell},\CE\cap(\CV_{p^k}\times \CW_{p^\ell}),\CP\cup\{p\},f_{p^k},g_{p^\ell})
\]
where the functions $f_{p^k}$, $g_{p^\ell}$ are defined on $\CP\cup\{p\}$ by the relations $f_{p^k}\vert_\CP=f$, $g_{p^\ell}\vert_\CP=g$, 
\[
f_{p^k}(p)=k\quad\text{and}\quad g_{p^\ell}(p)=\ell.
\]
\end{enumerate}
\end{dfn}

We further fix a constant $M\ge2$ and we introduce the auxiliary quantities  $C_1,\dots,C_7$ as follows:
\begin{equation}\label{eq:CDefs}
	\begin{split}
	&C_1
	=\frac{10^4}{\tau},  \qquad 
	C_2 = 10MC_1^3,  
	\qquad  
	C_3 = 10^3 C_1^3 , 
	\qquad 
	C_4 = 10^{10}M^2 C_2^2, \\
	&C_5=\max\{C_3, (50\log C_4)^3\}, 
	\qquad  C_6 = \max\{C_4, 10^4 MC_2, C_2^{ \frac{10}{\tau}}\},  
	\qquad C_7=C_5^{C_6}.
	\end{split}
\end{equation}

	\begin{dfn} Let $G = (\mu,\mathcal{V},\mathcal{W},\mathcal{E},\mathcal{P},f,g)$ be a GCD graph.
	We let $\CR(G)$ be given by
			\[
			\CR(G):= \{p\notin\CP: \exists (v,w)\in\CE\ \text{such that}\ p|\gcd(v,w)\} .
			\]
			That is to say $\CR(G)$ is the set of primes occurring in a GCD which we haven't yet accounted for. We split this into two further subsets:
			\begin{itemize}
				\item The set $\CR^\sharp(G)$ consisting of all primes $p\in \CR(G)$ for which there exists an integer $k\ge0$ such that
					\[
					\frac{\mu(\CV_{p^k})}{\mu(\CV)}\ge 1-\frac{C_2}{p}\quad\text{and}\quad
					\frac{\mu(\CW_{p^k})}{\mu(\CW)} \ge 1-\frac{C_2}{p},
					\]
					and 
					\[
					q(G_{p^a,p^b})<M\cdot q(G)
					\quad\text{for all}\ (a,b)\in\Z_{\ge0}^2\ \text{with}\ a\neq b.
					\]
				\item The set $\CR^\flat(G):=\CR(G)\setminus \CR^\sharp(G)$.
			\end{itemize}
	\end{dfn}

	In the next section, we reduce the proof of Propositions \ref{prop:main-prop1}-\ref{prop:main-prop3}  (and hence  Theorem \ref{thm:variance}) to the existence of
GCD subgraphs with nice properties as described in the following three propositions and three lemmas.
	
	\begin{proposition}[Iteration when $\CR^\flat(G)\ne\emptyset$]\label{prop:IterationStep1}
		Let $G=(\mu,\CV,\CW,\CE,\CP,f,g)$ be a GCD graph with edge density $\delta>0$ such that 
		\[
		\CR(G)\subseteq\{p>C_6\}\quad\text{and}\quad \CR^\flat(G)\neq\emptyset.
		\] 
		Then there is a GCD subgraph $G'$ of $G$ with edge density $\delta'>0$ and multiplicative data $(\CP',f',g')$ such that:
		\begin{enumerate}[label=(\alph*)]
			\item $G'$ is $(\CP',f',g')$-maximal;
			\item $\CP\subsetneq\CP'\subseteq\CP\cup\CR(G)$;
			\item$\CR(G')\subsetneq \CR(G)$;
			\item $q(G')\geqslant M^N q(G)$ with $N=\#\{p\in\CP'\setminus \CP:f'(p)\neq g'(p)\}$;
			\item $p^{f'(p)}\|v$ and $p^{g'(p)}\| w$ for all $p\in \CP'\setminus \CP$, $(v,w)\in \CV\times \CW$.
		\end{enumerate}
	\end{proposition}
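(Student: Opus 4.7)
My plan is to construct $G'$ by passing to a maximal subgraph of a single sliced graph $G_{p_0^a, p_0^b}$ (in the sense of Definition~\ref{def:special graphs}), where $p_0\in\CR^\flat(G)$ is fixed at the outset and the pair $(a,b)$ is chosen by the dichotomy that underlies the definition of $\CR^\flat$. Either (A) there is $(a,b)$ with $a\neq b$ and $q(G_{p_0^a,p_0^b})\ge M\,q(G)$, or (B) no such off-diagonal pair exists and, since $p_0\in\CR^\flat(G)$, the density conditions $\mu(\CV_{p_0^k})/\mu(\CV)\ge 1-C_2/p_0$ and $\mu(\CW_{p_0^k})/\mu(\CW)\ge 1-C_2/p_0$ must fail for every $k\ge 0$. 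In Case~(A) I take $G'$ to be a $(\CP\cup\{p_0\},\,f_{p_0^a},\,g_{p_0^b})$-maximal subgraph of $G_{p_0^a,p_0^b}$, obtaining $q(G')\ge q(G_{p_0^a,p_0^b})\ge M\,q(G)=M^N q(G)$ with $N=1$. In Case~(B) I find some $k$ with $q(G_{p_0^k,p_0^k})\ge q(G)$ (the sub-claim discussed below) and take $G'$ to be a $(\CP\cup\{p_0\},\,f_{p_0^k},\,g_{p_0^k})$-maximal subgraph of $G_{p_0^k,p_0^k}$; then $N=0$ and $q(G')\ge q(G)=M^N q(G)$.

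In both cases the remaining conditions are essentially automatic. Maximality yields~(a); $\CP'=\CP\cup\{p_0\}$ gives~(b); since $p_0\in\CP'$ we have $p_0\notin\CR(G')$, and combined with $\CE'\subseteq\CE$ this gives $\CR(G')\subseteq\CR(G)\setminus\{p_0\}\subsetneq\CR(G)$, verifying~(c); the above quality bound gives~(d); and~(e) follows from the definition of the sliced graph $G_{p_0^a,p_0^b}$, since every surviving $v\in\CV'$ satisfies exactly $p_0^a\|v$ and every surviving $w\in\CW'$ satisfies exactly $p_0^b\|w$. Non-triviality $\delta'>0$ is guaranteed in Case~(A) because $q(G')\ge M\,q(G)>0$ and in Case~(B) because $q(G')\ge q(G)>0$.

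The main obstacle is the diagonal sub-claim in Case~(B): producing $k\ge 0$ with $q(G_{p_0^k,p_0^k})\ge q(G)$. Writing $\alpha_{ab}=\mu(\CE_{p_0^a,p_0^b})/\mu(\CE)$, $\beta_a=\mu(\CV_{p_0^a})/\mu(\CV)$ and $\gamma_b=\mu(\CW_{p_0^b})/\mu(\CW)$, the sub-claim amounts to finding $k$ with
\[
\frac{\alpha_{kk}^{2+\tau}}{(\beta_k\gamma_k)^{1+\tau}}\;\ge\;\bigl(1-\tfrac{\mathbbm{1}_{k\ge 1}}{p_0}\bigr)^{2}\bigl(1-\tfrac{1}{p_0^{1+\tau/4}}\bigr)^{3}=1-O(1/p_0).
\]
Unpacking the Case~(B) off-diagonal hypothesis yields $\alpha_{ab}\ll_\tau M^{1/(2+\tau)}(\beta_a\gamma_b)^{(1+\tau)/(2+\tau)}p_0^{-|a-b|/(2+\tau)}$ for $a\neq b$; summing in $(a,b)$ and applying a H\"older estimate on $\sum_a\beta_a^{(1+\tau)/(2+\tau)}$ and $\sum_b\gamma_b^{(1+\tau)/(2+\tau)}$ shows that the off-diagonal mass $\sum_{a\neq b}\alpha_{ab}$ is $o_{\tau,M}(1)$ once $p_0>C_6\ge C_2^{10/\tau}$. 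Hence $\sum_k\alpha_{kk}=1-o(1)$. The H\"older lower bound $\sum_k\alpha_{kk}^{2+\tau}/(\beta_k\gamma_k)^{1+\tau}\ge\bigl(\sum_k\alpha_{kk}\bigr)^{2+\tau}/\bigl(\sum_k\beta_k\gamma_k\bigr)^{1+\tau}$, together with the spread information coming from the density-failure hypothesis (which forces $\sum_k\beta_k\gamma_k$ to be bounded away from $1$ via $\max_k\beta_k$ or $\max_k\gamma_k \le 1-C_2/p_0$ at the relevant indices), then allows a pigeonhole choice of $k$ meeting the displayed inequality, with the remaining $O(1/p_0)$ slack absorbed by the factor $(1-1/p_0^{1+\tau/4})^{-3}$ built into the quality.
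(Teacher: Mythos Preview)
Your overall architecture is exactly the paper's: fix $p_0\in\CR^\flat(G)$, split into the off-diagonal case (A) and the diagonal case (B), and in each case pass to a $(\CP\cup\{p_0\},f_{p_0^a},g_{p_0^b})$-maximal subgraph of the appropriate slice $G_{p_0^a,p_0^b}$. Case (A) and the verification of (a)--(e) are handled correctly.

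The genuine gap is in your argument for the diagonal sub-claim in Case~(B). Two of the three ingredients you invoke do not hold as stated. First, the ``H\"older estimate on $\sum_a\beta_a^{(1+\tau)/(2+\tau)}$'' cannot control the off-diagonal mass: with the exponent $\theta=(1+\tau)/(2+\tau)\in(1/2,1)$ the sum $\sum_a\beta_a^{\theta}$ is \emph{unbounded} over probability vectors $(\beta_a)$ (take $n$ equal parts to get $n^{1-\theta}\to\infty$), so the asserted bound $\sum_{a\neq b}\alpha_{ab}=o_{\tau,M}(1)$ does not follow from $\alpha_{ab}\ll (\beta_a\gamma_b)^{\theta}p_0^{-|a-b|/(2+\tau)}$. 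Second, the density-failure hypothesis says only that for each $k$ one has $\min(\beta_k,\gamma_k)<1-C_2/p_0$; it does \emph{not} give $\max_k\beta_k\le 1-C_2/p_0$ or $\max_k\gamma_k\le 1-C_2/p_0$, and it does not by itself force $\sum_k\beta_k\gamma_k$ to be bounded away from $1$ by more than $O(1/p_0)$. Finally, even granting your inputs, the H\"older inequality you quote lower-bounds a \emph{sum} $\sum_k \alpha_{kk}^{2+\tau}/(\beta_k\gamma_k)^{1+\tau}$, not any individual term, so a further argument is needed to extract a single $k$.

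The paper closes this gap differently. It proves a one-shot pigeonhole (Lemma~\ref{lem:EdgeSets}): assuming that \emph{no} pair $(k,\ell)$ satisfies either the diagonal bound $\alpha_{kk}\ge(\beta_k\gamma_k)^{(1+\tau)/(2+\tau)}$ or a suitably weighted off-diagonal bound, one sums the putative upper bounds and shows the total is $\le 1$, a contradiction. The crucial trick for the diagonal sum is $\sum_k(\beta_k\gamma_k)^{(1+\tau)/(2+\tau)}\le(\max_j\beta_j\gamma_j)^{\tau/(2(2+\tau))}\sum_k(\beta_k\gamma_k)^{1/2}\le 1$, using Cauchy--Schwarz on the $1/2$-powers, not a H\"older bound on $\sum_a\beta_a^\theta$. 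Then Lemma~\ref{lem:MainLem} shows that the off-diagonal outcome of Lemma~\ref{lem:EdgeSets}, combined with $q(G_{p_0^k,p_0^\ell})<Mq(G)$, forces exactly the density conditions you have assumed fail. Hence in your Case~(B) the pigeonhole must land on the diagonal, yielding $q(G_{p_0^k,p_0^k})\ge q(G)$ directly. Replacing your last paragraph with this argument repairs the proof.
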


	\begin{proposition}[Iteration when $\CR^\flat(G)=\emptyset$]\label{prop:IterationStep2}
		Let $G=(\mu,\CV,\CW,\CE,\CP,f,g)$ be a GCD graph with edge density $\delta>0$ such that 
		\[
		\CR(G)\subseteq\{p> C_6\},
		\quad
		\CR^\flat(G)=\emptyset ,
		\quad
		\CR^\sharp(G)\neq\emptyset.
		\] 
		Then there is a GCD subgraph $G'=(\mu,\CV',\CW',\CE',\CP',f',g')$ of $G$ such that
		\[
		\CP\subsetneq\CP'\subseteq\CP\cup\CR(G),
		\quad
		\CR(G')\subsetneq \CR(G),
		\quad 
		q(G')\geqslant q(G).
		\]
	\end{proposition}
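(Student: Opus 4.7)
The plan is to pick a prime $p\in\CR^\sharp(G)$ together with the integer $k\ge 0$ whose existence is asserted in the definition of $\CR^\sharp$, and set $G':=G_{p^k,p^k}$. The combinatorial conclusions are then immediate: $p\notin\CP$ because $p\in\CR(G)$, so $\CP\subsetneq\CP'=\CP\cup\{p\}\subseteq\CP\cup\CR(G)$, and adjoining $p$ to the prime set of $G'$ removes it from $\CR$, giving $\CR(G')\subseteq\CR(G)\setminus\{p\}\subsetneq\CR(G)$. The substantive task is the quality inequality $q(G_{p^k,p^k})\ge q(G)$. Unfolding and cancelling the shared multiplicative data, this is equivalent to
\[
\bgg(\frac{\mu(\CE_{p^k,p^k})}{\mu(\CE)}\bgg)^{2+\tau}\ge \bgg(\frac{\mu(\CV_{p^k})\mu(\CW_{p^k})}{\mu(\CV)\mu(\CW)}\bgg)^{1+\tau}E_k^{-1},
\]
where $E_k\ge 1$ is the Euler factor attached to the new prime $p$. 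Since $\mu(\CV_{p^k})/\mu(\CV)$ and $\mu(\CW_{p^k})/\mu(\CW)$ each exceed $1-C_2/p$, it suffices to show
\[
R:=\frac{1}{\mu(\CE)}\sum_{(a,b)\ne(k,k)}\mu(\CE_{p^a,p^b})\ll_\tau \frac{C_2}{p}.
\]

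For pairs $(a,b)$ with $a\ne b$, the defining inequality $q(G_{p^a,p^b})<Mq(G)$ of $\CR^\sharp$ unfolds into
\[
\mu(\CE_{p^a,p^b})<M^{1/(2+\tau)}\mu(\CE)\,(u_av_b)^{(1+\tau)/(2+\tau)}p^{-|a-b|/(2+\tau)},
\]
with $u_a=\mu(\CV_{p^a})/\mu(\CV)$ and $v_b=\mu(\CW_{p^b})/\mu(\CW)$. The arithmetic identity $\tfrac{1+\tau}{2+\tau}+\tfrac{1}{2+\tau}=1$ is precisely what is needed to sum these bounds against a geometric series in $|a-b|$: splitting into the cases $a=k,b\ne k$, $a\ne k,b=k$, and $a\ne k,b\ne k,a\ne b$, using $u_a\le C_2/p$ and $v_b\le C_2/p$ for indices different from $k$, and in the last case applying Cauchy--Schwarz (to avoid a factor counting the number of non-trivial $p$-adic valuations in $\CV\cup\CW$) together with the bounds $\sum_{a\ne k}u_a\le C_2/p$ and $\sum_{b\ne k}v_b\le C_2/p$, one obtains a total off-diagonal contribution to $R$ of order $M^{O(1)}C_2/p$, which is $\ll_\tau C_2/p$.

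The main obstacle is the diagonal part $\sum_{a\ne k}\mu(\CE_{p^a,p^a})$, on which $\CR^\sharp$ provides no direct control. The plan is a dichotomy: if $q(G_{p^a,p^a})\ge q(G)$ for some $a\ge 0$, one simply replaces $G'$ by the diagonal subgraph $G_{p^a,p^a}$, which satisfies the three required conclusions of the proposition equally well. Otherwise $q(G_{p^a,p^a})<q(G)$ for every $a\ge 0$, and the quality formula yields $\mu(\CE_{p^a,p^a})<\mu(\CE)(u_av_a)^{(1+\tau)/(2+\tau)}$; Cauchy--Schwarz together with the elementary inequality $u_a^{2(1+\tau)/(2+\tau)}\le u_a$ on $[0,1]$ (valid since $2(1+\tau)/(2+\tau)>1$) then give
\[
\sum_{a\ne k}(u_av_a)^{(1+\tau)/(2+\tau)}\le \bgg(\sum_{a\ne k}u_a\bgg)^{1/2}\bgg(\sum_{a\ne k}v_a\bgg)^{1/2}\le \frac{C_2}{p},
\]
bounding the diagonal contribution to $R$ by $C_2/p$. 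Combined with the off-diagonal estimate this forces $R\ll_\tau C_2/p$, whence $q(G_{p^k,p^k})\ge q(G)$; this contradicts the dichotomy's second branch, so $G'=G_{p^k,p^k}$ itself works.
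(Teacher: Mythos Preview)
Your argument has a genuine gap at the step ``$R\ll_\tau C_2/p$, whence $q(G_{p^k,p^k})\ge q(G)$''. Write $u_k=\mu(\CV_{p^k})/\mu(\CV)$ and $v_k=\mu(\CW_{p^k})/\mu(\CW)$. The quality inequality reads $(1-R)^{2+\tau}E_k\ge (u_kv_k)^{1+\tau}$, and the hypotheses $u_k,v_k\ge 1-C_2/p$ give no useful \emph{upper} bound on the right-hand side: if $u_k,v_k$ happen to be very close to $1$, the right-hand side is $1-O\bigl((1-u_k)+(1-v_k)+\un_{k\ge1}/p+1/p^{1+\tau/4}\bigr)$, so one needs $R$ bounded by essentially the same quantity \emph{with constant $\tfrac{1}{2+\tau}$}. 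Your off-diagonal estimate, however, gives only
\[
R\ \lesssim\ M^{1/(2+\tau)}\bigl[(1-u_k)^{(1+\tau)/(2+\tau)}+(1-v_k)^{(1+\tau)/(2+\tau)}\bigr]\cdot p^{-1/(2+\tau)}+\cdots,
\]
and since $(1+\tau)/(2+\tau)<1$, when $1-u_k$ and $1-v_k$ are of size $1/p$ this is \emph{larger} than $(1-u_k)+(1-v_k)$, not smaller. Concretely: take $k=0$, $u_0=1-\eps/p$ with $\eps$ tiny (just enough to place $p$ in $\CR(G)$), $v_0=1-1/p$, $v_1=1/p$, and let $\mu(\CE_{p^0,p^1})/\mu(\CE)$ sit just below $M^{1/(2+\tau)}/p$, which is exactly what the $\CR^\sharp$ constraint $q(G_{p^0,p^1})<Mq(G)$ allows. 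Then $R\approx M^{1/(2+\tau)}/p$, while
\[
\frac{q(G_{p^0,p^0})}{q(G)}\approx 1-\frac{(2+\tau)M^{1/(2+\tau)}-(1+\tau)}{p}<1
\]
since $(2+\tau)M^{1/(2+\tau)}\ge 2\cdot 2^{1/3}>1+\tau$. So neither branch of your dichotomy yields a subgraph of quality $\ge q(G)$.

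The paper's proof (Lemma~\ref{lem:MainLem2}) avoids this by not insisting on $G_{p^k,p^k}$. It builds a merged diagonal subgraph $G^+$ with vertex sets $\CV_{p^k}\cup\CV_{p^{k+1}}$ and $\CW_{p^k}\cup\CW_{p^{k+1}}$, declaring $f'(p)=g'(p)=k$ and keeping all edges with at least one endpoint of valuation exactly $k$; thus $\CE^+$ \emph{absorbs} the dangerous near-diagonal edges $\CE_{p^k,p^{k\pm1}}$ rather than trying to bound them away. Far off-diagonal edges are trimmed via Lemmas~\ref{lem:UnbalancedSetEdges1}--\ref{lem:UnbalancedSetEdges2} and the small-sets estimate \eqref{eq:NoEdgesBetweenSmallSets}, and the edges to valuation $k-1$ are handled by the alternatives $G_{p^k,p^{k-1}}$, $G_{p^{k-1},p^k}$. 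A three-way pigeonhole with \emph{explicit} constants (crucially using the inequality $x^{(1+\tau)/(2+\tau)}\le\tfrac{(1+\tau)x+1}{2+\tau}$) then shows one of $G^+$, $G_{p^k,p^{k-1}}$, $G_{p^{k-1},p^k}$ has quality $\ge q(G)$. The point is that the constants here are delicate and do not survive an $O_\tau(\cdot)$ treatment.
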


	\begin{proposition}[Bounded quality loss for small primes]\label{prop:SmallPrimes}
		Let  $G=(\mu,\mathcal{V},\mathcal{W},\mathcal{E},\mathcal{P},f,g)$  be a GCD graph with edge density $\delta>0$. Then there is a GCD subgraph $G'=(\mu,\CV',\CW',\CE',\CP',f',g')$ of $G$ with edge density $\delta'>0$ such that
		\[
		\mathcal{P}' \subseteq \mathcal{P} \cup \bg( \mathcal{R}(G) \cap \{ p\leqslant C_6 \}\bg),
		\quad
		\CR(G')\subseteq\{p> C_6\},
		\quad
	q(G')\ge q(G)/C_7.
		\]
	\end{proposition}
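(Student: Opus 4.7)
The plan is to process the primes of $\CR(G) \cap \{p \le C_6\}$ one at a time, accepting a bounded quality loss per prime. I would build a finite sequence of non-trivial GCD subgraphs $G = G_0, G_1, \dots, G_T$ in which each $G_{i+1}$ is obtained from $G_i$ by picking a prime $p_i \in \CR(G_i)$ with $p_i \le C_6$ and setting $G_{i+1} = (G_i)_{p_i^{k_i}, p_i^{\ell_i}}$ for a suitable choice of exponents $k_i, \ell_i \ge 0$. There are at most $C_6$ such primes, and $\CR(G_{i+1}) \subsetneq \CR(G_i)$ at each step, so the iteration terminates within $C_6$ rounds; if each round loses at most a factor of $C_5$ in quality, the total loss is at most $C_5^{C_6} = C_7$. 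The final graph $G' = G_T$ will then satisfy all three conclusions of the proposition.

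The key step is the following single-prime lemma: given a non-trivial GCD graph $H = (\mu, \CV, \CW, \CE, \CP, f, g)$ and a prime $p \in \CR(H)$ with $p \le C_6$, there exist $k, \ell \in \Z_{\ge 0}$ such that $H_{p^k, p^\ell}$ is non-trivial and $q(H_{p^k, p^\ell}) \ge q(H)/C_5$. Writing $a_{k, \ell} := \mu(\CE_{p^k, p^\ell})$ and $b_{k, \ell} := \mu(\CV_{p^k})\mu(\CW_{p^\ell})$, the quality ratio reads
\[
\frac{q(H_{p^k, p^\ell})}{q(H)} = \frac{a_{k, \ell}^{2+\tau}/b_{k, \ell}^{1+\tau}}{\mu(\CE)^{2+\tau}/(\mu(\CV)\mu(\CW))^{1+\tau}} \cdot p^{|k-\ell|} \Big(1 - \frac{\un_{k=\ell \ge 1}}{p}\Big)^{-2} \Big(1 - \frac{1}{p^{1+\tau/4}}\Big)^{-3}.
\]
Applying Hölder's inequality with exponents $2+\tau$ and $(2+\tau)/(1+\tau)$ to the identities $\sum_{k,\ell} a_{k,\ell} = \mu(\CE)$ and $\sum_{k,\ell} b_{k,\ell} = \mu(\CV)\mu(\CW)$ gives $\sum_{k,\ell} a_{k,\ell}^{2+\tau}/b_{k,\ell}^{1+\tau} \ge \mu(\CE)^{2+\tau}/(\mu(\CV)\mu(\CW))^{1+\tau}$, and incorporating the $\ge 1$ factors in the ratio yields the averaged bound $\sum_{k, \ell} q(H_{p^k, p^\ell}) \ge q(H)$.

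To convert this sum bound into a single term of size $\ge q(H)/C_5$, I would exploit the exponential factor $p^{|k - \ell|}$. Either all pairs $(k, \ell)$ with $a_{k, \ell} > 0$ have $\max(k, \ell)$ bounded by some $K^* = O(\log C_5)$, in which case a pigeonhole over the $O((K^*)^2)$ possibilities extracts a single $(k, \ell)$ with $q(H_{p^k, p^\ell}) \ge q(H)/C_5$; or there is a pair with $|k - \ell|$ large, and the multiplicative gain $p^{|k - \ell|}$ in the ratio alone provides such a term. A dyadic decomposition in both $k + \ell$ and $|k - \ell|$ formalizes this argument, and the constant $C_5 \ge (50 \log C_4)^3$ from \eqref{eq:CDefs} is calibrated to absorb the resulting logarithmic losses.

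The main obstacle will be this sum-to-max conversion: because $\mu$ is an arbitrary measure on $\N$, the support of $(a_{k, \ell})$ is not bounded graph-independently, so a naive pigeonhole over all $(k, \ell)$ fails. The combination of the $p^{|k-\ell|}$ gain with a careful dyadic scheme in $k + \ell$ must be used to reduce the problem to pigeonholing over only $O_\tau((\log C_5)^2)$ \emph{dominant} pairs, which is what makes a constant per-prime loss of $C_5$ attainable and hence yields the claimed overall factor $C_5^{C_6} = C_7$.
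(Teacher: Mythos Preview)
Your outer iteration (process one small prime at a time, at most $C_6$ rounds, lose at most $C_5$ per round) matches the paper exactly, and your H\"older inequality yielding $\sum_{k,\ell} q(H_{p^k,p^\ell}) \ge q(H)$ is correct. The gap is precisely where you flag it: the sum-to-max conversion does not go through as you describe.

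The proposed fix---a dyadic decomposition in $k+\ell$ combined with the $p^{|k-\ell|}$ gain---cannot work, because the quality ratio $q(H_{p^k,p^\ell})/q(H)$ has \emph{no} dependence on $k+\ell$ beyond the bounded factor $(1-\un_{k=\ell\ge 1}/p)^{-2}$. Consider edge sets supported entirely on the diagonal $k=\ell$ but spread across many values of $k$: then $p^{|k-\ell|}=1$ always, and nothing in your scheme limits the number of contributing $k$. Your H\"older bound degenerates to $\sum_k q(H_{p^k,p^k}) \ge q(H)$ with the number of summands controlled only by the graph, not by $\tau$ or $C_5$, so a pigeonhole over ``$O_\tau((\log C_5)^2)$ dominant pairs'' is not available.

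The paper does not argue via a sum bound at all. The key input is the \emph{pointwise} Lemma~\ref{lem:EdgeSets}: there is always some single $(k,\ell)$ with $\mu(\CE_{p^k,p^\ell})/\mu(\CE) \ge (\alpha_k\beta_k)^{(1+\tau)/(2+\tau)}$ when $k=\ell$ (which immediately gives $q(H_{p^k,p^k}) \ge q(H)$, handling the diagonal case with no loss), or an analogous off-diagonal bound. This is proved by contradiction using $\sum_k(\alpha_k\beta_k)^{1/2} \le 1$ (Cauchy--Schwarz on the constraints $\sum\alpha_k=\sum\beta_\ell=1$), which is exactly the extra structural input your H\"older approach lacks. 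The single-prime lemma (Lemma~\ref{lem:SmallIteration}) then layers several further steps on top: pass to a high-degree subgraph, use Lemma~\ref{lem:EdgeSets} to locate a dominant exponent $k$ with $\alpha_k,\beta_k \ge 9/10$, invoke the unbalanced-sets Lemma~\ref{lem:UnbalancedSetEdges1} to confine the relevant $\ell$ to a window $|k-\ell|\le r$ with $p^r>C_4$, and only then pigeonhole over the $O(\log C_4)$ remaining choices of $\ell$. The constant $C_5=\max\{C_3,(50\log C_4)^3\}$ is calibrated for this last step, not for a global dyadic scheme.
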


	\begin{lemma}[Removing the effect of $\CR(G)$ from $L_t(v,w)$]\label{lem:Cosmetic-L}
		Let $G=(\mu,\CV,\CW,\CE,\CP,f,g)$ be a maximal GCD graph with edge density $\delta>0$ and let $s\ge1$ and $t\ge C_6s$. Assume further that
		\[
		\CR^\flat(G)=\emptyset
		\quad\text{and}\quad
		\CE\subseteq \bg\{(v,w)\in\CV\times\CW: L_t(v,w)\ge 1/s \bg\}.
		\]
		Then there exists a GCD subgraph $G'=(\mu,\CV,\CW,\CE',\CP,f,g)$ of $G$ such that
		\[
		q(G')\ge \frac{q(G)}{2}>0\quad\text{and}\quad
		\CE'\subseteq 
		\bggg\{(v,w)\in\CV\times\CW:
		\sum_{\substack{p|v w/\gcd(v,w)^2\\p> t,\ p\notin \CR(G)}}\frac{1}{p}\ge \frac{1}{2s} \bggg\}.
		\]
	\end{lemma}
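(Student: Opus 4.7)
The plan is to take the obvious candidate subgraph and verify that the quality cost is at most a factor of $2$. Concretely, I would set
\[
\CE' := \bigg\{(v,w)\in\CE \,:\ \sum_{\substack{p\,|\,vw/\gcd(v,w)^2 \\ p>t,\, p\notin\CR(G)}} \frac{1}{p} \ge \frac{1}{2s}\bigg\}
\]
and $G' := (\mu,\CV,\CW,\CE',\CP,f,g)$. Since $G'$ shares the measure, vertex sets, and multiplicative data of $G$, one computes $q(G')/q(G)=(\mu(\CE')/\mu(\CE))^{2+\tau}$, so the desired conclusion $q(G')\ge q(G)/2$ reduces to proving $\mu(\CE')\ge 2^{-1/(2+\tau)}\mu(\CE)$.

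Every edge in $\CE\setminus\CE'$ satisfies $L_t(v,w)\ge 1/s$ by hypothesis but contributes less than $1/(2s)$ to the restricted sum, so its ``bad'' part $B(v,w):=\sum_{p\in\CR(G),\,p>t,\,\nu_p(v)\ne\nu_p(w)}1/p$ exceeds $1/(2s)$ on $\CE\setminus\CE'$. Markov's inequality therefore reduces the problem to proving
\[
\sum_{\substack{p\in\CR(G)\\p>t}}\frac{1}{p}\sum_{a\ne b}\mu(\CE_{p^a,p^b})\ \le\ \bg(1-2^{-1/(2+\tau)}\bg)\frac{\mu(\CE)}{2s}.
\]
For each $p\in\CR(G)=\CR^\sharp(G)$ (using $\CR^\flat(G)=\emptyset$), the definition of $\CR^\sharp$ supplies an integer $k=k_p$ with $\mu(\CV_{p^k})\ge(1-C_2/p)\mu(\CV)$ and $\mu(\CW_{p^k})\ge(1-C_2/p)\mu(\CW)$, and the inequality $q(G_{p^a,p^b})<Mq(G)$ for every $a\ne b$. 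Unfolding the latter via the definition of quality gives, with $r_a:=\mu(\CV_{p^a})/\mu(\CV)$ and $s_b:=\mu(\CW_{p^b})/\mu(\CW)$,
\[
\mu(\CE_{p^a,p^b})\ \le\ M^{1/(2+\tau)}\,\mu(\CE)\,(r_as_b)^{(1+\tau)/(2+\tau)}\,p^{-|a-b|/(2+\tau)} \qquad (a\ne b).
\]

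Writing $\alpha=(1+\tau)/(2+\tau)$ and $\beta=1/(2+\tau)$, the technical heart is the uniform-in-$p$ estimate $\sum_{a\ne b}(r_as_b)^\alpha p^{-|a-b|\beta}\ll_M C_2/p$. I would split the sum into three regimes. For (i)~$a=k,b\ne k$, H\"older with exponents $(1/\alpha,1/\beta)$ gives $\sum_b s_b^\alpha p^{-|k-b|\beta}\le(\sum_b s_b)^\alpha(\sum_b p^{-|k-b|})^\beta\le 1+O(1/p)$; subtracting the $b=k$ term, which is at least $s_k\ge 1-C_2/p$ (using $s_k^\alpha\ge s_k$ for $s_k\le 1$, $\alpha\le 1$), leaves only $O(C_2/p)$. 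Case (ii)~$a\ne k,b=k$ is symmetric. For (iii)~$a,b\ne k$ with $a\ne b$, the pointwise bounds $r_a,s_b\le C_2/p$ combined with an inner H\"older $\sum_{b\ne k}s_b^\alpha p^{-|a-b|\beta}\le(C_2/p)^\alpha(1+2/(p-1))^\beta$ yield a contribution of strictly smaller order. Once this key estimate holds, the bad sum becomes $\ll_M C_2\mu(\CE)\sum_{p>t}1/p^2\ll C_2\mu(\CE)/t$, and the bound $C_6\ge 10^4MC_2$ from \eqref{eq:CDefs} together with $t\ge C_6s$ makes this strictly less than $(1-2^{-1/(2+\tau)})\mu(\CE)/(2s)$.

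The main obstacle will be regime (iii): the na\"ive H\"older bound on $\sum_{a\ne k}r_a^\alpha$ is loose when the mass is distributed over many distinct $p$-adic valuations, so extracting the required polynomial-in-$p$ decay will require combining the pointwise bound $r_a\le C_2/p$ with the geometric kernel $p^{-|a-b|\beta}$ rather than relying on the $L^1$-constraint $\sum_{a\ne k}r_a\le C_2/p$ alone.
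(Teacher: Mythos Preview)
Your overall architecture matches the paper's exactly: define $\CE'$ by the anatomical condition, observe $q(G')/q(G)=(\mu(\CE')/\mu(\CE))^{2+\tau}$, and use Markov on the ``bad'' part $B(v,w)=\sum_{p\in\CR(G),\,p>t,\,\nu_p(v)\ne\nu_p(w)}1/p$. The difference is in how you bound $\sum_{a\ne b}\mu(\CE_{p^a,p^b})$ for each $p\in\CR^\sharp(G)$.

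The paper packages this into Corollary~\ref{cor:primes not dividing both}, whose proof (Lemma~\ref{lem:bound on edge set for anatomy}) splits exactly into your regimes (i)--(iii) but handles (iii) differently: it invokes the \emph{maximality} of $G$ via Lemma~\ref{lem:SmallSetEdges}. Since $\mu(\CV\setminus\CV_{p^k})\le (C_2/p)\mu(\CV)$ and similarly for $\CW$, if the edge mass on $(\CV\setminus\CV_{p^k})\times(\CW\setminus\CW_{p^k})$ exceeded $(C_2/p)^{(2+2\tau)/(2+\tau)}\mu(\CE)$ then the restricted subgraph would have strictly larger quality, contradicting maximality. This immediately dispatches regime (iii) with no combinatorics.

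Your route, by contrast, uses only the $\CR^\sharp$ condition $q(G_{p^a,p^b})<Mq(G)$ and never calls on maximality. Your instinct that (iii) is the obstacle is right, and your proposed cure (pointwise bound plus geometric kernel) does work, but you have not written the step. One clean completion: for fixed $d\ge1$, Cauchy--Schwarz gives $\sum_{a\ne k,\,a+d\ne k}r_a^\alpha s_{a+d}^\alpha\le(\sum_{a\ne k}r_a^{2\alpha})^{1/2}(\sum_{b\ne k}s_b^{2\alpha})^{1/2}$; since $2\alpha>1$ and each $r_a\le C_2/p$, one has $\sum_{a\ne k}r_a^{2\alpha}\le(C_2/p)^{2\alpha-1}\sum_{a\ne k}r_a\le(C_2/p)^{2\alpha}$, whence regime (iii) contributes $\le 2(C_2/p)^{2\alpha}\sum_{d\ge1}p^{-d\beta}\ll C_2^{2\alpha}/p^{1+\alpha}$, negligible next to the $O(C_2/p)$ from (i)--(ii). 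With this in hand your numerics go through using $C_6\ge10^4MC_2$. So your approach is valid and arguably sharper in that it does not use the maximality hypothesis at all; the paper's route is shorter because the maximality-based Lemma~\ref{lem:SmallSetEdges} handles (iii) in one line.
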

	
		\begin{lemma}[Removing the effect of $\CR(G)$ from $\omega_t(v,w)$]\label{lem:Cosmetic-omega}
		Let $G=(\mu,\CV,\CW,\CE,\CP,f,g)$ be a maximal GCD graph with edge density $\delta>0$ and let $t\ge \exp\exp(C_6)$ and $U\ge 2C_6\log\log t$. Assume further that
		\[
		\CR^\flat(G)=\emptyset
		\quad\text{and}\quad
		\CE\subseteq \bg\{(v,w)\in\CV\times\CW: \omega_t(v,w)\ge U \bg\}.
		\]
		Then there exists a GCD subgraph $G'=(\mu,\CV,\CW,\CE',\CP,f,g)$ of $G$ such that
		\[
		q(G')\ge \frac{q(G)}{2}>0
		\quad\text{and}\quad
		\CE'\subseteq 
		\bggg\{(v,w)\in\CV\times\CW:
		\sum_{\substack{p|v w/\gcd(v,w)^2\\C_6<p\le t,\ p\notin \CR(G)}} 1  \ge \frac{U}{2} \bggg\} .
		\]
	\end{lemma}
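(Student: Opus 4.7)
The plan is to set $\mathcal{E}' := \mathcal{E} \setminus \mathcal{E}^\flat$, where
\[
\mathcal{E}^\flat := \bigl\{(v,w) \in \mathcal{E} : X(v,w) > U/4\bigr\}, \qquad X(v,w) := \#\{p \in \mathcal{R}(G) : C_6 < p \leq t,\; p \mid vw/\gcd(v,w)^2\}.
\]
Since there are at most $\pi(C_6) \leq C_6$ primes $\leq C_6$ and $U \geq 4C_6$ (from $U \geq 2C_6\log\log t$ and $\log\log t \geq C_6 \geq 2$, valid because $t \geq \exp\exp(C_6)$), every $(v,w) \in \mathcal{E}'$ has at least $\omega_t(v,w) - C_6 - X(v,w) \geq U - C_6 - U/4 \geq U/2$ primes $p \in (C_6, t] \setminus \mathcal{R}(G)$ dividing $vw/\gcd(v,w)^2$, giving the edge condition in the conclusion for $G' := (\mu,\mathcal{V},\mathcal{W},\mathcal{E}',\mathcal{P},f,g)$. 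The quality ratio is $q(G')/q(G) = (\mu(\mathcal{E}')/\mu(\mathcal{E}))^{2+\tau}$, so it suffices to show $\mu(\mathcal{E}^\flat) \leq (1 - 2^{-1/(2+\tau)})\mu(\mathcal{E})$.

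I would obtain this via Markov applied to the first-moment identity $\sum_{(v,w) \in \mathcal{E}}\mu(v,w)X(v,w) = \sum_{p \in \mathcal{R}(G) \cap (C_6,t]}\sum_{a \neq b}\mu(\mathcal{E}_{p^a,p^b})$. Fix $p$ in the outer sum. Since $\mathcal{R}^\flat(G) = \emptyset$, one has $p \in \mathcal{R}^\sharp(G)$, yielding an exponent $k = k(p)$ with $\alpha_k := \mu(\mathcal{V}_{p^k})/\mu(\mathcal{V}) \geq 1 - C_2/p$ and $\beta_k := \mu(\mathcal{W}_{p^k})/\mu(\mathcal{W}) \geq 1 - C_2/p$, and $q(G_{p^a,p^b}) < M \cdot q(G)$ for every $a \neq b$. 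Rearranging the latter (the $\mathcal{P}$-factors cancel; for $a \neq b$ the factor $(1-\un_{a=b\geq 1}/p)^{-2}$ is $1$; the factor $(1 - 1/p^{1+\tau/4})^{-3}$ is bounded by an absolute constant for $p \geq C_6$) yields the pointwise estimate
\[
\mu(\mathcal{E}_{p^a,p^b}) \leq M^{1/(2+\tau)}\,\mu(\mathcal{E})\,\alpha_a^{(1+\tau)/(2+\tau)}\,\beta_b^{(1+\tau)/(2+\tau)}\,p^{-|a-b|/(2+\tau)}, \qquad a\neq b.
\]

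The technical heart of the proof is to sum this and obtain $\sum_{a \neq b}\mu(\mathcal{E}_{p^a,p^b}) \ll_{M,\tau}\mu(\mathcal{E})/p$. I split into the three ranges (i) $a = k,\,b \neq k$; (ii) $a \neq k,\,b = k$; (iii) $a,b \neq k,\,a\neq b$. Cases (i) and (ii) are handled by H\"older with exponents $(2+\tau)/(1+\tau)$ and $2+\tau$, using $\sum_{b\neq k}\beta_b \leq C_2/p$ and $\sum_{b\neq k}p^{-|k-b|} \leq 4/p$, to give a bound $\ll_\tau C_2^{(1+\tau)/(2+\tau)}/p$. Case (iii) requires a more delicate argument, since a direct H\"older over $(a,b)$ introduces an unwanted factor $N^{1/(2+\tau)}$, where $N$ is the number of distinct $p$-adic valuations appearing in $\mathcal{V},\mathcal{W}$. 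I would circumvent this by applying Cauchy--Schwarz with the symmetric splitting $p^{-|a-b|/(2+\tau)} = p^{-|a-b|/(2(2+\tau))}\cdot p^{-|a-b|/(2(2+\tau))}$, then exploiting the crucial fact $2(1+\tau)/(2+\tau) > 1$ (for $\tau > 0$) so that $\alpha_a^{2(1+\tau)/(2+\tau)} \leq \alpha_a$; summing gives
\[
\sum_{\substack{a,b\neq k\\ a \neq b}}\alpha_a^{2(1+\tau)/(2+\tau)}p^{-|a-b|/(2+\tau)} \leq \Bigl(\sum_{a\neq k}\alpha_a\Bigr) \cdot \sup_a\sum_{b \neq a}p^{-|a-b|/(2+\tau)} \ll_\tau (C_2/p)\cdot p^{-1/(2+\tau)},
\]
and similarly with $\alpha \leftrightarrow \beta$, yielding case (iii) $\ll_\tau C_2/p^{1+1/(2+\tau)}$.

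Summing over primes, $\sum_{(v,w)}\mu(v,w) X(v,w) \ll_{M,\tau}\mu(\mathcal{E})\sum_{C_6 < p \leq t}p^{-1} \ll_{M,\tau}\mu(\mathcal{E})\log\log t$. Markov then gives $\mu(\mathcal{E}^\flat) \leq (4/U)\sum_{(v,w)}\mu(v,w)X(v,w) \ll_{M,\tau}\mu(\mathcal{E})\log\log t/U \ll_{M,\tau}\mu(\mathcal{E})/C_6$, using $U \geq 2 C_6 \log\log t$. By the choice of constants in \eqref{eq:CDefs}, $C_6$ is sufficiently large to make this $\leq (1 - 2^{-1/(2+\tau)})\mu(\mathcal{E})$, completing the proof. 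The main obstacle is case (iii): the symmetric Cauchy--Schwarz exploiting $2(1+\tau)/(2+\tau) > 1$ is what avoids both a $\delta$-dependence (as would arise from the crude bound $\mu(\mathcal{V} \setminus \mathcal{V}_{p^k})\mu(\mathcal{W} \setminus \mathcal{W}_{p^k}) \leq (C_2/p)^2\mu(\mathcal{V})\mu(\mathcal{W})$) and an $N$-dependence on the number of valuations.
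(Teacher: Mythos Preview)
Your proof is correct and follows the same high-level strategy as the paper: bound the first moment $\sum_{(v,w)\in\mathcal{E}}\mu(v,w)X(v,w)$, apply Markov to control $\mu(\mathcal{E}^\flat)$, and deduce the quality and edge conclusions. The difference lies in how you establish the per-prime bound $\sum_{a\ne b}\mu(\mathcal{E}_{p^a,p^b})\ll_{M,\tau}\mu(\mathcal{E})/p$.

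The paper packages this estimate as Corollary~\ref{cor:primes not dividing both}, which in turn rests on Lemma~\ref{lem:bound on edge set for anatomy}. That lemma handles the ``both off-diagonal'' contribution (your case~(iii), together with the harmless diagonal $a=b\ne k$) via Lemma~\ref{lem:SmallSetEdges} applied with $\eta=C_2/p$, and this is precisely where the \emph{maximality} hypothesis on $G$ is invoked: if the small-set edge bound failed, one would get a strict quality increase on a subgraph with the same multiplicative data, contradicting maximality. Your treatment of case~(iii) is different and in one respect sharper: you use only the $\mathcal{R}^\sharp$ data $q(G_{p^a,p^b})<M\,q(G)$ for $a\ne b$, rearrange it to the pointwise bound on $\mu(\mathcal{E}_{p^a,p^b})$, and then sum by the symmetric Cauchy--Schwarz exploiting $2(1+\tau)/(2+\tau)>1$. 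This avoids any appeal to maximality, so your argument would go through even without that hypothesis. The paper's route is more modular (it reuses lemmas already proved for other purposes); yours is more self-contained. Both lead to the same $O(\mu(\mathcal{E})/p)$ bound and hence the same conclusion, and your final constant comparison is legitimate since the implied constant in your $\ll_{M,\tau}$ is of order $M^{1/(2+\tau)}C_2$, comfortably dominated by $C_6\ge 10^4 M C_2$.
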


	\begin{lemma}[Subgraph with high-degree vertices]\label{lem:HighDegreeSubgraph}
		Let $G=(\mu,\CV,\CW,\CE,\CP,f,g)$ be a GCD graph with edge density $\delta>0$. Then there is a GCD subgraph $G'=(\mu,\CV',\CW',\CE',\CP,f,g)$ of $G$ with edge density $\delta'>0$ such that:
		\begin{enumerate}[label=(\alph*)]
			\item $q(G')\geqslant q(G)$;
			\item For all $v\in\CV'$ and for all $w\in\CW'$, we have
			\[
			\mu(\Gamma_{G'}(v))\geqslant\frac{1+\tau}{2+\tau}\cdot \delta'\mu(\CW')
			\quad\text{and}\quad 
			\mu(\Gamma_{G'}(w))\geqslant \frac{1+\tau}{2+\tau}\cdot \delta' \mu(\CV').
			\]
		\end{enumerate}
		
	\end{lemma}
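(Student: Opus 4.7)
The plan is to take $G'$ as a subgraph of $G$ with the same multiplicative data $(\CP,f,g)$ that \emph{maximizes} the quality $q$. Such a maximizer exists because $\CV\times\CW$ is finite, and $G$ itself is an eligible competitor, so $q(G') \geq q(G) > 0$ and in particular $\delta' > 0$. This gives (a) immediately and provides the leverage for (b): any operation on $G'$ that preserves the multiplicative data and strictly raises the quality must be impossible.

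To prove (b), I would argue by contradiction: suppose some $v \in \CV'$ satisfies $\mu(\Gamma_{G'}(v)) < \frac{1+\tau}{2+\tau}\,\delta'\mu(\CW')$, and let $G''$ be obtained from $G'$ by deleting $v$ and its incident edges. Since $G''$ shares the multiplicative data of $G'$, the product $\prod_{p\in\CP}$ in the definition of $q$ cancels, and showing $q(G'')>q(G')$ reduces to showing $F(G'')>F(G')$ where $F(H) := \mu(\CE_H)^{2+\tau}/(\mu(\CV_H)\mu(\CW_H))^{1+\tau}$. Writing $A = \mu(\CE')$, $B = \mu(\CV')$, $C = \mu(\CW')$, $x=\mu(v)$, and $y = \mu(\Gamma_{G'}(v))$, the question becomes whether $\phi(1) > \phi(0)$ for
\[
\phi(t) := \frac{(A - txy)^{2+\tau}}{\bigl((B-tx)\,C\bigr)^{1+\tau}}, \qquad t \in [0,1].
\]

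The heart of the argument is to verify $\phi'(t) > 0$ on $[0,1]$. A direct differentiation shows that the sign of $\phi'(t)$ matches that of $\psi(t) := (1+\tau)(A - txy) - (2+\tau)\, y(B - tx)$; using $\delta' = A/(BC)$, the low-degree hypothesis is exactly $\psi(0) > 0$, and moreover $\psi'(t) = xy \geq 0$, so $\psi$ is non-decreasing and hence strictly positive throughout $[0,1]$. The degenerate case $B=x$ (i.e.\ $\CV'=\{v\}$) is ruled out because it forces $A=xy$, turning the hypothesis into $y < \frac{1+\tau}{2+\tau} y$, which is absurd. Consequently $q(G'')>q(G')$, contradicting maximality, so the first inequality in (b) must hold. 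The $\CW'$ inequality follows by the symmetric argument. The only mildly delicate point is the monotonicity $\psi'\ge 0$, which is what makes the path-wise increment argument work cleanly despite $F$ being a non-linear ratio of powers.
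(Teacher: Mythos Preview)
Your proposal is correct and follows essentially the same strategy as the paper: take a $(\CP,f,g)$-maximal GCD subgraph and show that any vertex violating the degree condition could be deleted to strictly increase the quality, contradicting maximality. The only difference is cosmetic---you verify the key inequality by differentiating the interpolation $\phi(t)$, whereas the paper establishes the same inequality by a direct algebraic manipulation.
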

	
\section{Proof of Propositions \ref{prop:main-prop1}-\ref{prop:main-prop3} assuming the results of Section \ref{sec:GCDgraphs}}\label{sec:proof-of-moments-bound}
	
\begin{dfn}[Neighbourhood sets]\label{dfn:neighborhood sets} Given a bipartite graph $G=(\CV,\CW,\CE)$, we define the \emph{neighbourhood sets} by
	\[
	\Gamma_G(v):=\{w\in\CW:\,(v,w)\in\CE\}\quad\text{for any}\ v\in\CV,
	\]
	and 
	\[
	\Gamma_G(w):=\{v\in\CV:\,(v,w)\in\CE\}
	\quad\text{for any}\ w\in \CW.
	\]			
\end{dfn}

\begin{proof}[Proof of Proposition \ref{prop:main-prop1}]
Let $\CS=\Z\cap[1,Q]$, $\mu(q)=\varphi(q)\psi(q)/q$ and $\CE=\{(q,r)\in\CS^2:D(q,r)\le y\}$, and consider the GCD graph $G=(\mu,\CS,\CS,\mathcal{E},\emptyset,f_\emptyset,g_\emptyset)$ with trivial set of primes. Let $\delta=\delta(G)$ be its edge density. It suffices to show that
	\begin{equation}
		\label{eq:main-prop1 rephrasing}
			q(G) \ll_\tau y^2.
	\end{equation}
	Indeed, we have that 
	\[
	q(G)=\delta^{2+\tau}\mu(\CS)^2 = \mu(\CE)^{2+\tau} \mu(\CS)^{-2-2\tau}
	\]
	and $\mu(\CS)= \frac{1}{2}  \Psi(Q)$. Hence, if \eqref{eq:main-prop1 rephrasing} holds, then 
	\[
	\mu(\CE) \ll_\tau y^{\frac{2}{2+\tau}}\Psi(Q)^{\frac{2+2\tau}{2+\tau}}.
	\]
	This proves Proposition \ref{prop:main-prop1} with $\eps=\tau/(2+\tau)$. Since $\tau>0$ can be taken to be arbitrarily small, this completes the proof of Proposition \ref{prop:main-prop1}. 
	
	Let us now prove \eqref{eq:main-prop1 rephrasing}. If $\delta=0$, relation \eqref{eq:main-prop1 rephrasing} holds trivially, so let us assume that $\delta>0$. We use the results of Section \ref{sec:GCDgraphs} with $M=2$. We first use Proposition \ref{prop:SmallPrimes}, and we then iterate Propositions \ref{prop:IterationStep1} and \ref{prop:IterationStep2}. Lastly, we apply Lemma \ref{lem:HighDegreeSubgraph}. Thus, we find that there exists a GCD subgraph $G' = (\mu,\mathcal{V}',\mathcal{W}',\mathcal{E}',\mathcal{P}',f',g')$ of $G$ of density $\delta'>0$. such that:
		\begin{enumerate}[label=(\alph*)]
			\item $\mathcal{R} (G') = \emptyset$;
			\item For all $v \in \mathcal{V}'$, we have $\mu(\Gamma_{G'}(v)) \geqslant \frac{1+ \tau }{2+\tau} \delta' \mu(\mathcal{W}')\ge 0.5\delta'\mu(\CW')$;
			\item For all $w \in \mathcal{W}'$, we have $\mu(\Gamma_{G'}(w)) \geqslant \frac{1+ \tau }{2+\tau} \delta'\mu(\mathcal{V}')\ge 0.5\delta'\mu(\CV')$;
			\item $q(G') \gg_\tau q(G)$.
		\end{enumerate}
	Hence, it suffices to prove that
		\begin{equation}
		\label{eq:main-prop1 rephrasing2}
		q(G') \ll_\tau y^2.
	\end{equation}
	To do this, we largely follow the proof of Proposition 6.3 in \cite{KM}, with some important simplifications because we do not need to keep track of the `anatomical' condition $L_t(v,w)\ge 10$ as we did there. For the ease of the reader, we repeat all the details instead of referring to \cite{KM}.
		
	Set
	\begin{equation}
		\label{eq:ab1}
	a:=\prod_{p\in\CP'}p^{f'(p)}
	\quad\text{and}\quad
	b:=\prod_{p\in\CP'}p^{g'(p)}.
	\end{equation}
	The definition of a GCD graph implies that 
	\begin{equation}
		\label{eq:ab2}
	a|v\quad\text{for all}\ v\in\CV',
	\qquad b|w\quad \text{for all}\ w\in\CW'.
	\end{equation}
	Moreover, since $\CR(G')=\emptyset$, and $p^{\min\{f'(p),g'(p)\}}\|\gcd(v,w)$ for all $(v,w)\in\CE'$, we have that 
	\begin{equation}
		\label{eq:ab3}
	\gcd(v,w)=\gcd(a,b)
	\quad\text{for all}\quad(v,w)\in\CE'.
	\end{equation}
	
	Now, note that
	\[
	\prod_{p\in\CP'}p^{|f'(p)-g'(p)|} = \prod_{p\in\CP'}p^{\max\{f'(p),g'(p)\}-\min\{f'(p),g'(p)\}}= \frac{\lcm(a,b)}{\gcd(a,b)}
	= \frac{ab}{\gcd(a,b)^2} ,
	\]
	as well as 
	\[
	\prod_{p\in\CP'}\frac{1}{(1-\un_{f'(p)=g'(p)\ge1}/p)^2(1-1/p^{1+\frac{\tau}{4}})^3} 
	\ll_\tau \prod_{p\in\CP'}\frac{1}{(1-\un_{f'(p)=g'(p)\ge1}/p)^2} \le \frac{ab}{\phi(a)\phi(b)} .
	\]
	Consequently, from the definition of $q(\cdot)$, we find
	\begin{align}
		q(G') &=  (\delta')^{2+\tau}\mu(\CV')\mu(\CW')
		\prod_{p\in\CP'} \frac{p^{|f'(p)-g'(p)|}}{(1-\un_{f'(p)=g'(p)\ge1}/p)^2(1-1/p^{1+\frac{\tau}{4}})^3}\nonumber\\
		& \ll_\tau (\delta')^{2+\tau} \mu(\CV')\mu(\CW') \cdot \frac{ab}{\gcd(a,b)^2} \cdot  \frac{ab}{\phi(a)\phi(b)}\nonumber \\
		&= (\delta')^{1+\tau} \mu(\CE')\cdot  \frac{ab}{\gcd(a,b)^2} \cdot  \frac{ab}{\phi(a)\phi(b)} .
		\label{eq:QG'Bound}
	\end{align}
	It remains to bound $\mu(\CE')$.
	
	We have that
	\[
	\CE'\subseteq\CE=\{(v,w)\in\CS\times\CS: D(v,w)\le y \},
	\]
	where we recall that $D(v,w)=\max\{v\psi(w),w\psi(v)\}/\gcd(v,w)$. 
	Since $\gcd(v,w)=\gcd(a,b)$ for all $(v,w)\in\CE'$, we infer that
	\[
	\psi(v)\le \frac{y\cdot \gcd(a,b)}{w}
	\quad\text{and}\quad 
	\psi(w)\le \frac{y\cdot \gcd(a,b)}{v}
	\quad\text{for all}\quad 
	(v,w)\in\CE'.
	\]
	The vertex sets $\CV',\CW'$ are finite sets of positive integers. For each $v\in\CV'$, let $w_{\max}(v)$ be the largest integer in $\CW'$ such that $(v,w_{\max}(v))\in\CE'$. (This quantity is well-defined in virtue of property (b) above. In addition, we emphasise to the reader that `largest' refers to the size of elements as positive integers, and does not depend on the measure $\mu$.) Similarly, for each $w\in\CW'$, let $v_{\max}(w)$ be the largest element of $\CV'$ such that $(v_{\max}(w),w)\in\CE'$. Consequently,
	\begin{equation}\label{psi ub}
		\psi(v)\le \frac{y\cdot \gcd(a,b)}{w_{\max}(v)}
		\quad\text{and}\quad 
		\psi(w)\le \frac{y\cdot \gcd(a,b)}{v_{\max}(w)}
		\quad\text{for all}\quad v\in \CV',\,w\in\CW'.
	\end{equation}

	Now, let $w_0$ be the largest integer in $\CW'$ and $\CE''=\{(v,w)\in\CE': (v,w_0)\in\CE'\}$. We then have
	\begin{equation}\label{eq:w0}
		w_{\max}(v)=w_0 \quad\text{for all}\quad(v,w)\in\CE'' .
	\end{equation}
	In addition, by properties (b) and (c) of $G'$, we have
	\[
			\mu(\CE'') 
		=\sum_{v\in \Gamma_{G'}(w_0)}\mu(v)\mu(\Gamma_{G'}(v))
		\ge \mu(\Gamma_{G'}(w_0)) \cdot \frac{\delta' \mu(\CW')}{2}		
		\ge \bigg(\frac{\delta'}{2} \bigg)^2 \mu(\CV')\mu(\CW') = \frac{\delta'\mu(\CE')}{4} .
	\]
	Substituting this bound into \eqref{eq:QG'Bound}, we find
	\begin{equation}\label{eq:q(G') bound}
		q(G') \ll_\tau (\delta')^{\tau} \mu(\CE'') \cdot  \frac{a b}{\phi(a)\phi(b)}\cdot \frac{a b}{\gcd(a,b)^2} 
		\le \mu(\CE'')  \cdot \frac{ab}{\gcd(a,b)^2} \cdot \frac{ab}{\phi(a)\phi(b)} ,
	\end{equation}
	where we used the trivial bound $\delta'\le 1$ in the second inequality and our assumption that $\tau>0$.
	
	In addition,
	\als{
		\mu(\CE'')  = \sum_{(v,w)\in\CE''} \frac{\psi(v)\phi(v)}{v} \cdot \frac{\psi(w)\phi(w)}{w} .
	}
	Since $a|v$ and $b|w$, we have $\phi(v)/v\le \phi(a)/a$ and $\phi(w)/w\le \phi(b)/b$. Therefore 
	\[
	\mu(\CE'')\le \frac{\phi(a)\phi(b)}{a b}\sum_{(v,w)\in\CE''}\psi(v)\psi(w).
	\]
	Together with \eqref{psi ub}, \eqref{eq:w0} and \eqref{eq:q(G') bound}, this implies that
	\begin{equation}
		q(G') \ll  y^2ab \sum_{(v,w)\in\CE''} \frac{1}{v_{\max}(w) w_0}  
		\le  y^2ab \sum_{(v,w)\in\CE'} \frac{1}{v_{\max}(w) w_0}  .
		\label{eq:G'Quality}
	\end{equation}
	Writing $v=v' a$ and $w=w' b$, we find that
	\begin{align*}
		\sum_{(v,w)\in\CE'} \frac{1}{v_{\max}(w) w_0}
		&\le \sum_{w'\le w_0/b}\frac{1}{ w_0 v_{\max}(b w')}
		\sum_{v'\le v_{\max}(b w')/a}1\\
		&\le \sum_{w'\le w_0/b}\frac{1}{ w_0 v_{\max}(b w')}\cdot \frac{v_{\max}(bw')}{a} \\
		&\le \frac{1}{ab} .
	\end{align*}
	Together with \eqref{eq:G'Quality}, this implies that
	\[
	q(G')\ll y^2 .
	\]
	as needed. This completes the proof of Proposition \ref{prop:main-prop1}.
\end{proof}

\begin{proof}[Proof of Proposition \ref{prop:main-prop2}] Let $C_6$ be defined as in \eqref{eq:CDefs} , where the parameter $M$ will be determined later. If $t\le C_6s$, then the result follows immediately by Proposition \ref{prop:main-prop1}. Let us assume now that $t\ge C_6s$.
	
Let $\CS=\Z\cap[1,Q]$, $\mu(q)=\varphi(q)\psi(q)/q$ and $\CE=\{(q,r)\in\CS^2:D(q,r)\le y,\ L_t(q,r)\ge 1/s\}$, and consider the GCD graph $G=(\mu,\CS,\CS,\mathcal{E},\emptyset,f_\emptyset,g_\emptyset)$ with trivial set of primes. Let $\delta=\delta(G)$ be its edge density. As in the proof of Proposition \ref{prop:main-prop1}, it suffices to show that
	\begin{equation}
		\label{eq:main-prop2 rephrasing}
		q(G) \ll_{\tau,C} y^2 e^{-Ct/s}. 
	\end{equation}
We may assume that $\delta>0$; otherwise, \eqref{eq:main-prop2 rephrasing} holds trivially. To proceed, we adapt the argument leading to Propositions 6.3 and 7.1 of \cite{KM}. 

We shall apply the results of Section \ref{sec:GCDgraphs} on GCD graphs with the parameter $M=e^{4C}$. First, we use Proposition \ref{prop:SmallPrimes} to find a GCD subgraph $G^{(0)} = (\mu,\mathcal{V}^{(0)},\mathcal{W}^{(0)},\mathcal{E}^{(0)},\mathcal{P}^{(0)},f^{(0)},g^{(0)})$ of $G$ of density $\delta^{(0)}>0$ such that:
	\begin{enumerate}[label=(\alph*)]
	\item $\CP^{(0)}\subseteq\{p\le C_6\}$;
	\item $\CR(G^{(0)})\subseteq\{p>C_6\}$;
	\item $q(G^{(0)}) \gg_{\tau,C} q(G)$.
\end{enumerate}

Next, we claim we may find a GCD subgraph $G^{(1)} = (\mu,\mathcal{V}^{(1)},\mathcal{W}^{(1)},\mathcal{E}^{(1)},\mathcal{P}^{(1)},f^{(1)},g^{(1)})$ of $G^{(0)}$ that has density $\delta^{(1)}>0$ and that satisfies the following properties:
\begin{enumerate}[label=(\alph*)]
	\item $G^{(1)}$ is maximal;
	\item $\CR^\flat(G^{(1)}) = \emptyset$;
	\item $q(G^{(1)}) \ge e^{4CN} q(G^{(0)})$ 
	with $N=\#\{p\in\CP^{(1)}: p>C_6,\ f^{(1)}(p)\neq g^{(1)}(p)\}$;
	\item $p^{f^{(1)}(p)}\| v$ and $p^{g^{(1)}(p)}\|w$ for all $(v,w)\in \CV^{(1)}\times \CW^{(1)}$ and all primes $p>C_6$ lying in $\CP^{(1)}$. 
\end{enumerate}

Indeed, if $\CR^\flat(G^{(0)})\neq\emptyset$, we may apply 
Proposition \ref{prop:IterationStep1} in an iterative fashion to construct $G^{(1)}$. On the other hand, if $\CR^\flat(G^{(0)})=\emptyset$, then we need to be a bit careful because $G^{(0)}$ is not necessarily maximal, so property (a) above might not be automatically satisfied if we take $G^{(1)}=G^{(0)}$. In this case, we begin by first selecting a maximal GCD subgraph of $G^{(0)}$ with the same multiplicative data as $G^{(0)}$, let us call it $G'$. We then check whether the set $\CR^\flat(G')$ is empty or not. If it is empty, then we take $G^{(1)}=G'$ and check easily that this choice satisfies all four properties listed above (property (c) holds with $N=0$). Otherwise, if $\CR^\flat(G')\neq\emptyset$, then we repeatedly apply Proposition \ref{prop:IterationStep1} till we arrive to a GCD subgraph $G^{(1)}$ of $G'$ satisfying all four properties above. 

In all cases, we have constructed the claimed GCD graph $G^{(1)}$. We then separate two cases.
	
	\bigskip
	
	\noindent {\it Case 1:} $N\ge 0.25t/s$. We then iterate Propositions \ref{prop:IterationStep1} and \ref{prop:IterationStep2}, and we subsequently apply Lemma \ref{lem:HighDegreeSubgraph} to arrive at 
	a GCD subgraph $G' = (\mu,\mathcal{V}',\mathcal{W}',\mathcal{E}',\mathcal{P}',f',g')$ of $G^{(1)}$ of density $\delta'>0$ such that:
	\begin{enumerate}[label=(\alph*)]
		\item $\mathcal{R} (G') = \emptyset$;
		\item For all $v \in \mathcal{V}'$, we have $\mu(\Gamma_{G'}(v)) \geqslant \frac{1+ \tau }{2+\tau} \delta' \mu(\mathcal{W}')\ge 0.5\delta'\mu(\CW')$;
		\item For all $w \in \mathcal{W}'$, we have $\mu(\Gamma_{G'}(w)) \geqslant \frac{1+ \tau }{2+\tau} \delta'\mu(\mathcal{V}')\ge 0.5\delta'\mu(\CV')$;
		\item $q(G') \ge q(G^{(1)})$.
	\end{enumerate}
	In particular, we see that $q(G')\gg_\tau e^{4CN} q(G)\ge e^{Ct/s}q(G)$. In addition, arguing as in the proof of Proposition \ref{prop:main-prop1}, we have that $q(G')\ll_\tau y^2$. Hence, \eqref{eq:main-prop2 rephrasing} holds in this case.
	
	\bigskip
	
	\noindent {\it Case 2:} $N< 0.25t/s$. In this case we must exploit the condition $L_t(q,r)\ge 1/s$ to obtain the necessary savings. If we blindly apply Proposition \ref{prop:IterationStep2}, we might lose track of this condition. So we first perform some cosmetic surgery to our graph. Applying Lemma \ref{lem:Cosmetic-L}, we find that there exists a GCD subgraph $G^{(2)}=(\mu,\CV^{(2)},\CW^{(2)},\CE^{(2)},\CP^{(2)},f^{(2)},g^{(2)})$  of $G^{(1)}$ with 
	\begin{align}
		\CP^{(2)}&=\CP^{(1)},\label{eq:P3b}\\
		q(G^{(2)})&\ge \frac{q(G^{(1)})}{2},
		\label{eq:G3Quality}
	\end{align}
	and such that 
	\begin{equation}
		\sum_{\substack{p|v w/\gcd(v,w)^2\\p>t,\ p\notin \CR(G^{(1)})}}\frac{1}{p}\ge \frac{1}{2s}
		\quad\text{whenever}\quad
		(v,w)\in\CE^{(2)}.
		\label{eq:Anat1}
	\end{equation}
	
	We claim that an inequality of the form \eqref{eq:Anat1} holds even if we remove from consideration the primes lying in $\CP^{(1)}$. Indeed, if $p>C_6$ lies in $\CP^{(1)}$, $(v,w)\in \CE^{(2)}\subseteq\CE^{(1)}$  and  $ p | \frac{vw}{\gcd(v, w)^2}$, then we use property (d) of $G^{(1)}$ to find that $f^{(1)}(p)\neq g^{(1)}(p)$. Hence,
	\[
	\sum_{\substack{p|v w/\gcd(v,w)^2\\p>t,\ p\in \CP^{(1)} }}\frac{1}{p}
	< \frac{\#\{p\in \CP^{(1)}: f^{(1)}(p)\neq g^{(1)}(p),\ p>t\}} {t} \le \frac{N}{t}< \frac{1}{4s}
	\]
	by our assumption on $N$, and because $t\ge C_6s\ge C_6$ here. As a consequence, for all $(v,w)\in\CE^{(2)}$ we have
	\begin{equation}
		\sum_{\substack{p|v w/\gcd(v,w)^2\\ p>t,\ p\notin \CR(G^{(1)})\cup \CP^{(1)} }}\frac{1}{p}\ge\frac{1}{4s}. 
		\label{eq:Anat4}
	\end{equation}
	
	Next, we iterate Propositions \ref{prop:IterationStep1} and Proposition \ref{prop:IterationStep2}, and we also apply Lemma \ref{lem:HighDegreeSubgraph} to find a  GCD subgraph $G' = (\mu,\mathcal{V}',\mathcal{W}',\mathcal{E}',\mathcal{P}',f',g')$ of $G^{(2)}$ of density $\delta'>0$ such that:
	\begin{enumerate}[label=(\alph*)]
		\item $\mathcal{R} (G') = \emptyset$;
		\item For all $v \in \mathcal{V}'$, we have $\mu(\Gamma_{G'}(v)) \geqslant \frac{1+ \tau }{2+\tau} \delta' \mu(\mathcal{W}')\ge 0.5\delta'\mu(\CW')$;
		\item For all $w \in \mathcal{W}'$, we have $\mu(\Gamma_{G'}(w)) \geqslant \frac{1+ \tau }{2+\tau} \delta'\mu(\mathcal{V}')\ge 0.5\delta'\mu(\CV')$;
		\item $q(G') \ge q(G^{(2)})$;
		\item \eqref{eq:Anat4} holds for all $(v,w)\in \CE'$.
	\end{enumerate}
	Hence, Proposition \ref{prop:main-prop2} will follow in this case too if we show that
	\begin{equation}
	\label{eq:main-prop2 rephrasing2}
	q(G') \ll_{\tau,C} y^2e^{-Ct/s}. 
	\end{equation}
	To do this, we largely follow the proof of Propositions 6.3 and 7.1 in \cite{KM}. We give all details for the ease of the reader.

	As in the proof of Proposition \ref{prop:main-prop1} above, we define $a$ and $b$ by \eqref{eq:ab1} and take note of relations \eqref{eq:ab2} and \eqref{eq:ab3}. In addition, relation \eqref{eq:QG'Bound} implies that 
	\begin{equation}		
		\label{eq:QG'Bound again}
		q(G')  \ll_\tau (\delta')^{1+\tau} \mu(\CE')\cdot  \frac{ab}{\gcd(a,b)^2} \cdot  \frac{ab}{\phi(a)\phi(b)} .
	\end{equation}
	
	It remains to bound $\mu(\CE')$. First of all, note that
	\[
	\CP'
	\subseteq \CR(G^{(2)})\cup\CP^{(2)}
	\subseteq \CR(G^{(1)})\cup\CP^{(1)},
	\]
	where the second relation follows by \eqref{eq:P3b}. 
	Let $(v,w)\in\CE'$ and let us write $v=av'$ and $w=bw'$. Note that if $p|vw$ but $p\nmid v'w'$, then $p|ab$, and thus $p\in \CP'	\subseteq \CR(G^{(1)})\cup\CP^{(1)}$. Since $\gcd(v,w)=\gcd(a,b)$, we must have that $\gcd(v',w')=1$.  Together with \eqref{eq:Anat4}, this implies that 
	$L_t(v',w') \ge 1/(4s)$. We conclude that
	\[
	\CE'\subseteq \bggg\{(v,w)=(av',bw')\in\CV'\times\CW': 
	\begin{array}{l}
		\gcd(v,w)=\gcd(a,b),\ D(v,w)\le y,\\
		L_t(v',w')\ge \frac{1}{4s} 
	\end{array}\bggg\}.
	\]
	
	Next, let us define $w_{\max}(v)$, $v_{\max}(w)$, $w_0$ and $\CE''$ as in the proof of Proposition \ref{prop:main-prop1}. We find that 
	\begin{equation}
		q(G') \ll  y^2ab \sum_{(v,w)\in\CE''} \frac{1}{v_{\max}(w) w_0}  
		\le  y^2ab \sum_{(v,w)\in\CE'} \frac{1}{v_{\max}(w) w_0}  .
		\label{eq:G'Quality again}
	\end{equation}
	If $v=v' a$ and $w=w' b$ and $(v,w)\in\CE'$, then we know that $L_t(v',w')\ge1/(4s)$. In particular, we see that either 
	\[
	\sum_{p|v',\,p>t}\frac{1}{p}\ge \frac{1}{8s}
	\quad\text{or}
	\quad
	\sum_{p|w',\,p>t}\frac{1}{p}\ge \frac{1}{8s}
	\]
	whenever $(v,w)\in\CE'$. Consequently,
	\[
		\sum_{(v,w)\in\CE'} \frac{1}{v_{\max}(w) w_0}  
		\le S_1+S_2,
	\]
	where
	\begin{align*}
		S_1&\coloneqq \mathop{\sum\sum}_{\substack{w'\le w_0/b,\ v'\le v_{\max}(b w')/a\\ \sum_{p|v',\,p>t}1/p\ge 1/(8s)}}  \frac{1}{v_{\max}(b w') w_0} ,\\
		S_2&\coloneqq \mathop{\sum\sum}_{\substack{w'\le w_0/b,\ v'\le v_{\max}(b w')/a\\ \sum_{p|w',\,p>t}1/p\ge 1/(8s)}}  \frac{1}{v_{\max}(b w') w_0}  .
	\end{align*}
	For $S_1$, we note that 
	\begin{align*}
		S_1&\le \sum_{w'\le w_0/b} \frac{1}{w_0v_{\max}(b w')} \sum_{\substack{v'\le v_{\max}(b w')/a \\ \sum_{p|v',\,p>t}1/p\ge 1/(8s)}} 1\\
		&\ll_C \sum_{w'\le w_0/b} \frac{1}{w_0v_{\max}(b w')} \cdot \frac{v_{\max}(b w')/a}{e^{Ct/s}} \\
		&\le \frac{1}{ab e^{Ct/s}} 
	\end{align*}
	by Lemma \ref{lem:Anatomy-L}. Similarly for $S_2$, we find that
	\begin{align*}
		S_2&\le\sum_{\substack{w'\le w_0/b \\ \sum_{p|w',\,p>t}1/p\ge 1/(8s)}} \frac{1}{w_0v_{\max}(b w')} \sum_{v'\le v_{\max}(b w')/a}1\\
		&\le \sum_{\substack{w'\le w_0/b \\ \sum_{p|w',\,p>t}1/p\ge 1/(8s)}} \frac{1}{aw_0} \\
		&\ll_C \frac{1}{ab e^{Ct/s}} ,
	\end{align*}
	by applying Lemma \ref{lem:Anatomy-L} once again. Substituting these bounds into \eqref{eq:G'Quality again} establishes \eqref{eq:main-prop2 rephrasing2}, thus completing the proof of Proposition \ref{prop:main-prop2}.
\end{proof}

\begin{proof}[Proof of Proposition \ref{prop:main-prop3}] 
	We use a very similar argument to the one employed in the previous proof. We highlight only the important changes. 
	
	\begin{itemize}
		\item We may assume that $t$ is large enough in terms of $\kappa$ and $C$; otherwise, the result follows from Proposition \ref{prop:main-prop1}. 
		\item We use the results of Section \ref{sec:GCDgraphs} with $M=e^{4C/\kappa}$.
		\item We separate Cases 1 and 2 according to whether $N>0.25\kappa\log t$ or $N\le 0.25\kappa\log t$. 
		\item In Case 2, we aim to show that 
			\begin{equation}
			\label{eq:main-prop3 rephrasing}
			q(G') \ll_{\tau,\kappa,C} y^2 t^{-C} .
		\end{equation}
		\item In  Case 2, we use Lemma \ref{lem:Cosmetic-omega} with $U=\kappa\log t$ in place of Lemma \ref{lem:Cosmetic-L} to find $G^{(2)}$ such that $\#\{p|vw/\gcd(v,w)^2:C_6<p\le t,\ p\notin \CR(G^{(1)})\} \ge 0.5\kappa\log t$ whenever $(v,w)\in \CE^{(2)}$. 
		\item  In  Case 2, we note that $\#\{p|vw/\gcd(v,w)^2: p>C_6,\ p\in  \CP^{(1)}\}\le N\le 0.25\kappa\log t$, where we used property (d) of the graph $G^{(1)}$. Thus $\#\{p|vw/\gcd(v,w)^2: C_6<p\le t,\ p\notin \CR(G^{(1)})\cup\CP^{(1)}\}\ge 0.25\kappa\log t$.
		\item To prove \eqref{eq:main-prop3 rephrasing} and conclude the proof, we apply Lemma \ref{lem:Anatomy-omega} in place of \ref{lem:Anatomy-L}.
	\end{itemize}
	We leave the details for how to complete the proof of Proposition \ref{prop:main-prop3} to the reader.
\end{proof}

	\section{Proof of Lemma \ref{lem:HighDegreeSubgraph}}\label{sec:HighDegree}
	
	In this section we establish Lemma \ref{lem:HighDegreeSubgraph}. 
	
	Recall the definition of neighborhood sets (cf.~Definition \ref{dfn:neighborhood sets}). We then have the following lemma.
	
	\begin{lemma}[Quality increment or all vertices have high degree]\label{lem:HighDegree}
		Let	$G=(\mu,\CV,\CW,\CE,\CP,f,g)$ be a GCD graph with edge density $\delta>0$. 		Then one of the following holds:
		\begin{enumerate}[label=(\alph*)]
			\item For all $v\in\CV$ and for all $w\in\CW$, we have
			\[
			\mu(\Gamma_G(v))\geqslant\frac{1+\tau}{2+\tau} \cdot \delta\mu(\CW)
			\quad\text{and}\quad 
			\mu(\Gamma_G(w))\geqslant  \frac{1+\tau}{2+\tau} \cdot \delta \mu(\CV).
			\]
			\item There is a GCD subgraph $G'=(\mu,\CV',\CW',\CE',\CP,f,g)$ of $G$ with quality $q(G')> q(G)$.
		\end{enumerate}
	\end{lemma}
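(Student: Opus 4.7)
The plan is to prove the dichotomy by showing that any violation of the degree condition in (a) can be rectified by a single vertex deletion that strictly increases the quality. By the symmetric roles of $\CV$ and $\CW$ in the quality formula, it suffices to treat the case of a vertex $v_0\in\CV$ with $\mu(\Gamma_G(v_0))<\beta\delta\mu(\CW)$, where $\beta:=(1+\tau)/(2+\tau)\in(0,1)$. Define
\[
G':=(\mu,\CV\setminus\{v_0\},\CW,\CE\cap((\CV\setminus\{v_0\})\times\CW),\CP,f,g).
\]
All conditions of Definition \ref{gcd graph dfn} transfer verbatim since the multiplicative data $(\CP,f,g)$ is unchanged, so $G'$ is a bona fide GCD subgraph with the same prime-power factor in the quality formula.

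Introduce the parameters $\theta:=\mu(v_0)/\mu(\CV)$ and $\eta:=\mu(\Gamma_G(v_0))/(\delta\mu(\CW))$, so by hypothesis $\eta<\beta$. A direct calculation gives $\mu(\CV\setminus\{v_0\})=(1-\theta)\mu(\CV)$ and $\delta(G')=\delta(1-\theta\eta)/(1-\theta)$, from which
\[
\frac{q(G')}{q(G)}=\frac{(1-\theta\eta)^{2+\tau}}{(1-\theta)^{1+\tau}}.
\]
To show this ratio exceeds $1$ for $\eta<\beta$ and $\theta\in(0,1)$, note that the strict concavity of $x\mapsto x^\beta$ on $(0,\infty)$ together with its tangent line at $x=1$ yields $(1-\theta)^\beta<1-\beta\theta$ for $\theta\in(0,1)$; raising both sides to the positive power $(1+\tau)/\beta=2+\tau$ gives $(1-\theta)^{1+\tau}<(1-\beta\theta)^{2+\tau}$. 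Combining this with the inequality $(1-\theta\eta)^{2+\tau}>(1-\beta\theta)^{2+\tau}$, which follows from $\eta<\beta$, produces $q(G')>q(G)$, establishing (b).

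The boundary cases pose no genuine obstacle. The case $\theta=1$ would mean $\CV=\{v_0\}$, but then the edge-density identity forces $\mu(\Gamma_G(v_0))=\delta\mu(\CW)$, i.e.\ $\eta=1\not<\beta$, a contradiction; similarly $\delta(G')>0$ is automatic since $\mu(\CE')=0$ would force $\theta\eta=1$. I expect the main interest of this lemma to lie not in the proof but in the \emph{choice of exponent} $(1+\tau)/(2+\tau)$, which is precisely calibrated so that the tangent-line inequality matches the ratio of the exponents $(2+\tau)$ on $\delta$ and $1$ on $\mu(\CV)\mu(\CW)$ in the quality functional; the $\tau$-dependent factor of $(1+\tau)/(2+\tau)$ in the conclusion of (a) is therefore essentially forced by the definition of $q(G)$.
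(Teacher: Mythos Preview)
Your proof is correct and follows the same single-vertex-deletion strategy as the paper: remove a low-degree vertex $v_0$ and verify that the quality ratio $(1-\theta\eta)^{2+\tau}/(1-\theta)^{1+\tau}$ exceeds $1$. The only cosmetic difference is in how the key inequality is packaged---you invoke concavity of $x\mapsto x^\beta$ and its tangent at $x=1$, while the paper writes the equivalent Bernoulli-type step $(1+\tfrac{y}{2+\tau})^{2+\tau}\ge 1+y$ directly into its chain of inequalities; your closing remark on why the threshold $(1+\tau)/(2+\tau)$ is forced by the exponents in $q(G)$ is exactly the point.
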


	\begin{proof} Assume that (a) fails. Then either its first or its second inequality fails. Assume that the first one fails for some $v\in\CV$; the other case is entirely analogous. Let $\CE'$ be the set of edges between the vertex sets $\CV\setminus\{v\}$ and $\CW$. Note that 
		\[
		\mu(\CE')= \mu(\CE)-\mu(v)\mu(\Gamma_G(v)) > \bggg(1 - \frac{1+\tau}{2+\tau}\bggg)\mu(\CE) > 0
		\]
		because $\mu(\Gamma_G(v))< \frac{1+\tau}{2+\tau} \delta\mu(\CW)$, $\mu(v)\leqslant \mu(\CV)$, and $\mu(\CE)>0$ by the assumption that $\delta>0$. In particular, we have $\mu(\CW),\mu(\CV\setminus\{v\})>0$. We then consider $G'=(\mu,\CV\setminus\{v\},\CW,\CE',\CP,f,g)$, which is a GCD subgraph of $G$. Let $G'$ have edge density $\delta'$. We claim that $\delta'\geqslant \delta$ and $q(G')\geqslant q(G)$.
		
		Indeed, we have
		\begin{align*}
			\mu(\CE')
			= \mu(\CE)-\mu(v)\mu(\Gamma_G(v)) 
			&> \delta \mu(\CV)\mu(\CW)-\frac{(1+\tau)\delta}{2+\tau}\mu(v)\mu(\CW) \\
			&=\delta \big(\mu(\CV)-\mu(v)\big)\mu(\CW)\cdot\Big(1+\frac{\mu(v)}{\mu(\CV)-\mu(v)}\cdot\frac{1}{2+\tau}\Big).
		\end{align*}
		Thus the edge density $\delta'$ of $G'$ satisfies
		\[
		\delta'=\frac{\mu(\CE')}{\mu(\CV\setminus\{v\})\mu(\CW)}
		=\frac{\mu(\CE')}{\big(\mu(\CV)-\mu(v)\big)\mu(\CW)}
		> \delta\cdot \Bigl(1+\frac{\mu(v)}{\mu(\CV)-\mu(v)}\cdot\frac{1}{2+\tau}\Bigr).
		\]
		Consequently,
		\begin{align*}
			(\delta')^{2+\tau}\mu(\CV\setminus\{v\})\mu(\CW)
			&> \delta^{2+\tau}\big(\mu(\CV)-\mu(v)\big)\mu(\CW)
			\Bigl(1+\frac{\mu(v)}{\mu(\CV)-\mu(v)}\Bigr)
			= \delta^{2+\tau} \mu(\CV)\mu(\CW) .
		\end{align*}
		This proves the claim that $q(G')>q(G)$, thus completing the proof of the lemma.
	\end{proof}
	
	\begin{proof}[Proof of Lemma \ref{lem:HighDegreeSubgraph}] Let $G'$ be a $(\CP,f,g)$-maximal GCD subgraph of $G$, and let us apply Lemma \ref{lem:HighDegree} to $G'$. By its maximality, conclusion (a) of  Lemma \ref{lem:HighDegree} must hold. This completes the proof. 
	\end{proof}

	\section{Lemmas on GCD graphs}\label{sec:Prep}
	
	\begin{lemma}[Quality variation for special GCD subgraphs]\label{lem:InducedGraphs}
		Let $G=(\mu,\CV,\CW,\CE,\CP,f,g)$ be a GCD graph, $p\in\CR(G)$ and $k,\ell\in\Z_{\ge0}$.  In addition, if $G$ is non-trivial and $\mu(\CV_{p^k}),\mu(\CW_{p^\ell})>0$, then we have
		\[
		\frac{q(G_{p^k,p^\ell})}{q(G)}
		=\bigg(\frac{\mu(\CE_{p^k,p^\ell})}{\mu(\CE)}\bigg)^{2+\tau}
		\bigg(\frac{\mu(\CV)}{\mu(\CV_{p^k})}\bigg)^{1+\tau}
		\bigg(\frac{\mu(\CW)}{\mu(\CW_{p^\ell})}\bigg)^{1+\tau}
		\frac{p^{|k-\ell|}}{(1-\un_{k=\ell\geqslant 1}/p)^2(1-1/p^{1+\tau/4})^{3}}.
		\]
	\end{lemma}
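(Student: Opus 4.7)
The proof is a direct calculation from the definition of quality, and I would organize it as follows. First I would rewrite the quality in the multiplicative form
\[
q(G) = \frac{\mu(\CE)^{2+\tau}}{(\mu(\CV)\mu(\CW))^{1+\tau}} \prod_{p\in \CP} p^{|f(p)-g(p)|}\bigl(1-\un_{f(p)=g(p)\geq 1}/p\bigr)^{-2}\bigl(1-1/p^{1+\tau/4}\bigr)^{-3},
\]
obtained by substituting $\delta(G) = \mu(\CE)/(\mu(\CV)\mu(\CW))$ into the definition of $q(G)$; this rewriting is legitimate because $G$ is non-trivial, so $\mu(\CV)\mu(\CW) > 0$.

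Next I would apply the same rewriting to $G_{p^k,p^\ell}$. Since $p \in \CR(G)$, the definition of $\CR(G)$ forces $p \notin \CP$, so the prime set of $G_{p^k,p^\ell}$ is the disjoint union $\CP \sqcup \{p\}$ and the product over primes in $q(G_{p^k,p^\ell})$ factors as the product over $\CP$, identical to the one appearing in $q(G)$ since $f_{p^k},g_{p^\ell}$ agree with $f,g$ on $\CP$, times one additional factor corresponding to the prime $p$. Because $f_{p^k}(p)=k$ and $g_{p^\ell}(p)=\ell$, this extra factor simplifies to exactly
\[
p^{|k-\ell|}\bigl(1-\un_{k=\ell\geq 1}/p\bigr)^{-2}\bigl(1-1/p^{1+\tau/4}\bigr)^{-3}.
\]

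Finally, I would take the ratio $q(G_{p^k,p^\ell})/q(G)$. The product over the common prime set $\CP$ cancels identically, and the measure-dependent prefactors combine to produce precisely the three ratios $(\mu(\CE_{p^k,p^\ell})/\mu(\CE))^{2+\tau}$, $(\mu(\CV)/\mu(\CV_{p^k}))^{1+\tau}$ and $(\mu(\CW)/\mu(\CW_{p^\ell}))^{1+\tau}$ appearing on the right-hand side of the claim. The hypotheses $\mu(\CV_{p^k}),\mu(\CW_{p^\ell}) > 0$ guarantee that none of the denominators vanish, and the non-triviality of $G$ ensures the same for $\mu(\CE),\mu(\CV),\mu(\CW)$. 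The lemma is essentially a bookkeeping exercise and I do not anticipate any substantive obstacle beyond verifying that $p \notin \CP$ so that the product over primes factors correctly.
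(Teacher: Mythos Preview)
Your proposal is correct and is precisely the direct computation the paper has in mind; the paper's own proof is the single line ``This follows directly from the definitions.'' Your write-up simply spells out that computation, and all the checks you flag (that $p\in\CR(G)$ forces $p\notin\CP$, and that the positivity hypotheses keep the denominators nonzero) are exactly the points that need to be noted.
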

	\begin{proof}
		This follows from directly the definitions.
	\end{proof}

	\begin{lemma}[One subgraph must have limited quality loss]\label{lem:Pigeonhole}
		Let $G=(\mu,\CV,\CW,\CE,\CP,f,g)$ be a GCD graph with edge density $\delta>0$, and let $\CV=\CV_1\sqcup\dots \sqcup\CV_I$ and $\CW=\CW_1\sqcup\dots \sqcup\CW_J$ be partitions of $\CV$ and $\CW$. Then there is a GCD subgraph $G'=(\mu,\CV',\CW',\CE',\CP,f,g)$ of $G$ with edge density $\delta'>0$ such that
		\[
		q(G')\geqslant \frac{q(G)}{(I J)^{2+\tau}},\qquad \delta'\geqslant\frac{\delta}{I J},
		\]
		and with $\CV'\in\{\CV_1,\dots,\CV_I\}$, $\CW'\in\{\CW_1,\dots,\CW_J\}$, and $\CE'=\CE(\CV',\CW')$.
	\end{lemma}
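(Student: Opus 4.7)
The plan is a direct pigeonhole argument on the edge set. Since the partitions of $\CV$ and $\CW$ induce a partition of $\CE$ via
\[
\CE = \bigsqcup_{i=1}^{I}\bigsqcup_{j=1}^{J} \CE(\CV_i,\CW_j),
\]
and $\mu$ is additive on disjoint unions, summing gives $\mu(\CE)=\sum_{i,j}\mu(\CE(\CV_i,\CW_j))$. Hence there exist indices $i^*,j^*$ for which $\mu(\CE(\CV_{i^*},\CW_{j^*}))\ge \mu(\CE)/(IJ)$. I would then take $\CV'=\CV_{i^*}$, $\CW'=\CW_{j^*}$, $\CE'=\CE(\CV',\CW')$, and keep the same multiplicative data $(\CP,f,g)$. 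This is manifestly a GCD subgraph, and since $\mu(\CE)>0$ (because $\delta>0$) we also have $\mu(\CE')>0$, which in turn forces $\mu(\CV'),\mu(\CW')>0$ and so $\delta'$ is well-defined and strictly positive.

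For the density bound I would just estimate
\[
\delta' \;=\; \frac{\mu(\CE')}{\mu(\CV')\mu(\CW')} \;\ge\; \frac{\mu(\CE)/(IJ)}{\mu(\CV)\mu(\CW)} \;=\; \frac{\delta}{IJ},
\]
using $\mu(\CV')\le\mu(\CV)$ and $\mu(\CW')\le\mu(\CW)$. For the quality bound, the key observation is that $G$ and $G'$ share the same multiplicative data $(\CP,f,g)$, so the product over $p\in\CP$ appearing in the definition of $q$ cancels in the ratio $q(G')/q(G)$. Rewriting $\delta^{2+\tau}\mu(\CV)\mu(\CW)=\mu(\CE)^{2+\tau}/(\mu(\CV)\mu(\CW))^{1+\tau}$ (and analogously for $G'$), I get
\[
\frac{q(G')}{q(G)} \;=\; \left(\frac{\mu(\CE')}{\mu(\CE)}\right)^{2+\tau}\!\left(\frac{\mu(\CV)\mu(\CW)}{\mu(\CV')\mu(\CW')}\right)^{1+\tau} \;\ge\; \frac{1}{(IJ)^{2+\tau}},
\]
where in the last step I use $\mu(\CE')/\mu(\CE)\ge 1/(IJ)$ together with the trivial bounds $\mu(\CV')\le\mu(\CV)$ and $\mu(\CW')\le\mu(\CW)$ to drop the second factor (it is $\ge 1$).

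There is no real obstacle here: the argument is pure pigeonhole plus the fact that the multiplicative-data factor in $q(G)$ depends only on $(\CP,f,g)$ and so is preserved on any GCD subgraph that retains the same multiplicative data. The only mild care needed is to confirm non-triviality ($\delta'>0$), which follows automatically from $\delta>0$ and the pigeonhole lower bound $\mu(\CE')\ge \mu(\CE)/(IJ)>0$.
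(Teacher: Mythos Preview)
Your proof is correct and is exactly the standard pigeonhole argument intended here; the paper itself does not write out a proof but simply cites Lemma~11.2 of \cite{KM} with the obvious changes for the modified definition of $q(G)$, which amounts precisely to the computation you give.
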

	\begin{proof}
		This is Lemma 11.2 in \cite{KM} with the obvious changes to account for the modified definition of $q(G)$. 
	\end{proof}

	\begin{lemma}[Few edges between unbalanced sets, I]\label{lem:UnbalancedSetEdges1}
		Let	$G=(\mu,\CV,\CW,\CE,\CP,f,g)$ be a GCD graph with edge density $\delta>0$. Let $p\in\CR(G)$, $r\in\Z_{\geqslant1}$ and $k\in\mathbb{Z}_{\geqslant 0}$ be such that $p^r> C_4$ and
		\[
		\frac{\mu(\CW_{p^k})}{\mu(\CW)}\geqslant 1-\frac{C_2}{p}.
		\]
		(In particular, if $p\leqslant C_2$, the last hypothesis is vacuous.) 
		
		If we set $\CL_{k,r}=\{\ell\in\Z_{\geqslant0}:|\ell-k|\geqslant r+1\}$, then one of the following holds:
		\begin{enumerate}[label=(\alph*)]
			\item There is $\ell\in\CL_{k,r}$ such that $q(G_{p^k,\,p^\ell})>M\cdot q(G)$.
			\item $\sum_{\ell\in\CL_{k,r}}  \mu(\CE_{p^k,\,p^\ell})\leqslant\mu(\CE)/(4p^{1+\tau/4})$.
		\end{enumerate}
	\end{lemma}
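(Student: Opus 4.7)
The plan is to argue by contraposition: assume conclusion (a) fails, so $q(G_{p^k, p^\ell})\le M\cdot q(G)$ for every $\ell\in\CL_{k,r}$, and deduce (b). Since $|k-\ell|\ge r+1\ge 2$ for $\ell\in\CL_{k,r}$, we have $k\neq \ell$, so the indicator $\un_{k=\ell\ge 1}$ in Lemma \ref{lem:InducedGraphs} vanishes. Rearranging that identity, discarding the harmless factor $(1-1/p^{1+\tau/4})^{3}\le 1$, and taking $(2+\tau)$-th roots yields the pointwise estimate
\[
\frac{\mu(\CE_{p^k,p^\ell})}{\mu(\CE)} \le M^{1/(2+\tau)} \left(\frac{\mu(\CV_{p^k})}{\mu(\CV)}\right)^{\!(1+\tau)/(2+\tau)} \left(\frac{\mu(\CW_{p^\ell})}{\mu(\CW)}\right)^{\!(1+\tau)/(2+\tau)} p^{-|k-\ell|/(2+\tau)}
\]
for each $\ell\in\CL_{k,r}$.

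Next, I would sum over $\ell\in\CL_{k,r}$, discarding the $\CV$-factor by the crude bound $\mu(\CV_{p^k})/\mu(\CV)\le 1$ (which suffices since we only need to save one power) and applying H\"older's inequality with conjugate exponents $(2+\tau)/(1+\tau)$ and $2+\tau$ to the remaining $\ell$-sum. The hypothesis $\mu(\CW_{p^k})/\mu(\CW)\ge 1-C_2/p$ controls the arithmetic piece via $\sum_{\ell\neq k}\mu(\CW_{p^\ell})/\mu(\CW)\le C_2/p$, while the analytic piece $\sum_{\ell\in\CL_{k,r}} p^{-|k-\ell|}$ is a geometric sum bounded by $4 p^{-(r+1)}$ (summing both tails $\ell\ge k+r+1$ and $\ell\le k-r-1$, using $1-1/p\ge 1/2$). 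Combining gives
\[
\sum_{\ell\in\CL_{k,r}} \frac{\mu(\CE_{p^k,p^\ell})}{\mu(\CE)} \le (4M)^{1/(2+\tau)}\, C_2^{(1+\tau)/(2+\tau)}\, p^{-(r+2+\tau)/(2+\tau)}.
\]

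It remains to verify that this is at most $1/(4p^{1+\tau/4})$, and I expect this constant-chasing to be the only delicate part of the write-up. After taking $(2+\tau)$-th powers, the required inequality reduces to $p^{r-\tau/2-\tau^2/4}\ge 4^{3+\tau} M C_2^{1+\tau}$. The key trick is the trivial bound $p\le p^r$, which gives $p^{\tau/2+\tau^2/4}\le (p^r)^{\tau/2+\tau^2/4}$, and hence $p^{r-\tau/2-\tau^2/4}\ge (p^r)^{1-\tau/2-\tau^2/4}\ge C_4^{0.994}$ using $p^r>C_4$ and $\tau<1/100$. The numerical choice $C_4=10^{10}M^2 C_2^2$ in \eqref{eq:CDefs} was calibrated precisely so that $C_4^{0.994}$ comfortably dominates $4^{3+\tau} M C_2^{1+\tau}$, closing the argument.
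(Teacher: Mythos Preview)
Your proof is correct and takes a genuinely different route from the paper's. The paper argues the other contraposition: it assumes (b) fails, so the total $\sum_{\ell\in\CL_{k,r}}\mu(\CE_{p^k,p^\ell})$ is large, and then uses a weighted pigeonhole (with weights $2^{-|k-\ell|/20}$, normalized via $\sum_{|j|\ge0}2^{-|j|/20}\le 60$) to extract a single $\ell\in\CL_{k,r}$ with $\mu(\CE_{p^k,p^\ell})$ large enough that plugging into Lemma~\ref{lem:InducedGraphs} forces $q(G_{p^k,p^\ell})>M\cdot q(G)$. You instead assume (a) fails, extract a uniform pointwise bound on each $\mu(\CE_{p^k,p^\ell})$ from Lemma~\ref{lem:InducedGraphs}, and sum via H\"older. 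The two arguments are essentially dual: the paper's pigeonhole is exactly the tool that inverts your H\"older step. Your approach is arguably cleaner in that H\"older is standard and you avoid the slightly ad~hoc weight $2^{|k-\ell|/20}$; the paper's approach has the minor advantage of explicitly locating the $\ell$ that yields the quality gain. The numerical constraints both routes impose on $C_4$ are comparable.

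One cosmetic point for the write-up: Lemma~\ref{lem:InducedGraphs} formally requires $\mu(\CV_{p^k}),\mu(\CW_{p^\ell})>0$, so you should dispose of the degenerate cases first (if $\mu(\CV_{p^k})=0$ then every $\mu(\CE_{p^k,p^\ell})=0$ and (b) is trivial; if $\mu(\CW_{p^\ell})=0$ for some $\ell$ then that term contributes $0$ to the sum and your pointwise inequality holds vacuously).
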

	
	\begin{proof} Assume that conclusion $(b)$ does not hold, so $\sum_{\ell\in\CL_{k,r}}  \mu(\CE_{p^k,p^\ell})>\mu(\CE)/(4p^{1+\tau/4})$ and we wish to establish $(a)$. Then there must exist some $\ell\in\CL_{k,r}$ such that 
		\[
		\mu(\CE_{p^k,p^\ell}) > \frac{\mu(\CE)}{300\cdot2^{|k-\ell|/20}p^{1+\tau/4}} >0 ,
		\]
		where we used that $\sum_{|j|\geqslant 0}2^{-|j|/20}\leqslant 2/(1-2^{-1/20})\leqslant 60$. In particular, $G_{p^k,p^\ell}$ is a non-trivial GCD graph. Since $\mu(\CW_{p^k})\geqslant (1-C_2/p)\mu(\CW)$ and $\ell\neq k$, we have that $\mu(\CW_{p^\ell})\leqslant C_2\mu(\CW)/p$. Consequently,
		\begin{align*}
			\frac{q(G_{p^k,p^\ell})}{q(G)}
			&= \bigg(\frac{\mu(\CE_{p^k,p^\ell})}{\mu(\CE)}\bigg)^{2+\tau}
			\bigg(\frac{\mu(\CV)}{\mu(\CV_{p^k})}\bigg)^{1+\tau}
			\bigg(\frac{\mu(\CW)}{\mu(\CW_{p^\ell})}\bigg)^{1+\tau} \frac{p^{|k-\ell|}}{(1-1/p^{1+\tau/4})^3} \\
			&\geqslant \Big(\frac{1}{300\cdot 2^{|k-\ell|/20}p^{1+\tau/4}}\Big)^{2+\tau}
			\bggg(\frac{p}{C_2}\bggg)^{1+\tau} p^{|k-\ell|} \\
			&\geqslant \frac{1}{300^{2+\tau}C_2^{1+\tau}}\cdot \bggg( \frac{p}{2^{\frac{2+\tau}{20}}}\bggg)^{|k-\ell|} \cdot p^{-1 - \tau/2 - \tau^2/4 }.
		\end{align*}
		
		Note that $0< \tau \leqslant 0.01$, so $1 + \tau/2 + \tau^2/4 \leqslant 1.01$ and $ (2+\tau)/20\leqslant 1/9$. Since $|k-\ell|\geqslant r+1\geqslant 0.99r + 1.01$ and $p/2^{1/9} \geqslant p^{8/9}$, we have 
		\[
		\bggg(\frac{p}{2^{\frac{2+\tau}{20}}}\bggg)^{|k-\ell|} \cdot p^{-1 - 0.5\tau - 0.25 \tau^2 } \geqslant p^{\frac{88}{100}r}\cdot \frac{1}{2^{\frac{101}{900}}}.
		\]
		Therefore
		\begin{align*}
			\frac{q(G_{p^k,p^\ell})}{q(G)}	&\geqslant  \frac{1}{300^{2.01}C_2^{1.01}} \cdot p^{\frac{88}{100}r}\cdot \frac{1}{2^{\frac{101}{900}}}
			>M
		\end{align*}
		by our assumption that $p^r> C_4$ (recall \eqref{eq:CDefs}). 
%
%
		This completes the proof of the lemma.
	\end{proof}

	Clearly, the symmetric version of Lemma \ref{lem:UnbalancedSetEdges1}  also  holds:

	\begin{lemma}[Few edges between unbalanced sets, II]\label{lem:UnbalancedSetEdges2}
		Let $G=(\mu,\CV,\CW,\CE,\CP,f,g)$ be a GCD graph with edge density $\delta>0$. Let $p\in\CR(G)$,  $r\in\mathbb{Z}_{\geqslant 1}$ and $\ell\in\Z_{\geqslant0}$ be such that $p^r>C_4$ and
		\[
		\frac{\mu(\CV_{p^\ell})}{\mu(\CV)}\geqslant 1-\frac{C_2}{p},
		\]
		and set $\CK_{\ell,r}=\{k\in\Z_{\geqslant0}:|\ell-k|\geqslant r+1\}$. Then one of the following holds:
		\begin{enumerate}[label=(\alph*)]
			\item There is $k\in\CK_{\ell,r}$ such that $q(G_{p^k,\,p^\ell})>M\cdot q(G)$.
			\item $\sum_{k\in\CK_{\ell,r}}  \mu(\CE_{p^k,\,p^\ell})\leqslant\mu(\CE)/(4p^{1+\tau/4})$.
		\end{enumerate}
	\end{lemma}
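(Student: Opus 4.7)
The statement is manifestly the $\CV \leftrightarrow \CW$ symmetric version of Lemma \ref{lem:UnbalancedSetEdges1}, so my plan is to mimic that proof line by line, swapping the roles of the two vertex sets (and correspondingly $k \leftrightarrow \ell$, $f \leftrightarrow g$). Specifically, I would assume that conclusion (b) fails and show that (a) must hold.

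Under the negation of (b), a pigeonhole argument via the convergent series $\sum_{j\ge 0} 2^{-j/20} \leqslant 60$ produces some $k \in \CK_{\ell,r}$ with
\[
\mu(\CE_{p^k,p^\ell}) > \frac{\mu(\CE)}{300 \cdot 2^{|k-\ell|/20}\, p^{1+\tau/4}} > 0,
\]
so $G_{p^k,p^\ell}$ is non-trivial. The key structural observation (the only place where the symmetry matters substantively) is that the sets $\{\CV_{p^j}\}_{j\ge 0}$ partition $\CV$, so the hypothesis $\mu(\CV_{p^\ell}) \geqslant (1 - C_2/p)\mu(\CV)$ together with $k \ne \ell$ forces $\mu(\CV_{p^k}) \leqslant C_2\mu(\CV)/p$. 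Here, in contrast to the proof of Lemma \ref{lem:UnbalancedSetEdges1}, the strong bound in Lemma \ref{lem:InducedGraphs} comes from the $\CV$-factor, while the $\CW$-factor is merely estimated trivially by $\mu(\CW)/\mu(\CW_{p^\ell}) \geqslant 1$.

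Plugging these bounds into Lemma \ref{lem:InducedGraphs} gives
\[
\frac{q(G_{p^k,p^\ell})}{q(G)} \geqslant \Bigl(\frac{1}{300 \cdot 2^{|k-\ell|/20} p^{1+\tau/4}}\Bigr)^{2+\tau} \Bigl(\frac{p}{C_2}\Bigr)^{1+\tau} \frac{p^{|k-\ell|}}{(1-1/p^{1+\tau/4})^3},
\]
which is exactly the quantity bounded below in the proof of Lemma \ref{lem:UnbalancedSetEdges1}. The remainder of the argument is then identical: using $|k-\ell| \geqslant r + 1$, $0 < \tau \leqslant 0.01$, and the elementary inequality $p/2^{(2+\tau)/20} \geqslant p^{8/9}$, one obtains $(p/2^{(2+\tau)/20})^{|k-\ell|} p^{-1-\tau/2-\tau^2/4} \geqslant p^{88r/100}/2^{101/900}$, and the hypothesis $p^r > C_4$ together with the definition of $C_4$ in \eqref{eq:CDefs} completes the verification that $q(G_{p^k,p^\ell}) > M \cdot q(G)$.

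No new obstacle arises; the only thing to be careful about is the partition observation above that transfers the upper bound on $\mu(\CV_{p^\ell})^c$ into the required bound on $\mu(\CV_{p^k})$ for each specific $k \ne \ell$. All numerical constants are unchanged from the proof of Lemma \ref{lem:UnbalancedSetEdges1}.
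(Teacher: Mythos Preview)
Your proposal is correct and matches the paper's approach exactly: the paper itself gives no separate proof of Lemma~\ref{lem:UnbalancedSetEdges2}, simply noting that it is the symmetric version of Lemma~\ref{lem:UnbalancedSetEdges1}, and you have spelled out precisely that symmetry argument with the roles of $\CV$ and $\CW$ (and $k$ and $\ell$) swapped.
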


	\begin{lemma}[Few edges between small sets]\label{lem:SmallSetEdges}
		Let	$G=(\mu,\CV,\CW,\CE,\CP,f,g)$ be a GCD graph with edge density $\delta>0$ and let $\eta\in(0,1]$. Then one of the following holds:
		\begin{enumerate}[label=(\alph*)]
			\item For all sets $\CA\subseteq\CV$ and $\CB\subseteq\CW$ such that $\mu(\CA)\leqslant \eta\cdot  \mu(\CV)$ and  $\mu(\CB)\leqslant\eta \cdot \mu(\CW)$, we have $\mu(\CE(\CA,\CB))\leqslant \eta^{(2+2\tau)/(2+\tau)}\cdot \mu(\CE)$.
			\item There is a GCD subgraph $G'=(\mu,\CV',\CW',\CE',\CP,f,g)$ of $G$ such that $q(G')>q(G)$,  $\CV'\subsetneq \CV$ and $\CW'\subsetneq \CW$.
		\end{enumerate}
	\end{lemma}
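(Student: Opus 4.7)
The plan is to prove the dichotomy by supposing (a) fails and producing a subgraph witnessing (b). First observe that if $\eta=1$ then $\eta^{(2+2\tau)/(2+\tau)}=1$ and (a) reads $\mu(\CE(\CA,\CB))\le \mu(\CE)$, which is automatic; so I may assume $\eta\in(0,1)$. Assume then that (a) fails and pick $\CA\subseteq\CV$ and $\CB\subseteq\CW$ with
\[
\mu(\CA)\le\eta\mu(\CV),\quad \mu(\CB)\le\eta\mu(\CW),\quad \mu(\CE(\CA,\CB))>\eta^{(2+2\tau)/(2+\tau)}\mu(\CE).
\]
Define $G'=(\mu,\CA,\CB,\CE(\CA,\CB),\CP,f,g)$. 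Since $\CA\subseteq\CV$, $\CB\subseteq\CW$, and $\CE(\CA,\CB)=\CE\cap(\CA\times\CB)\subseteq\CE$, this is clearly a GCD subgraph of $G$ sharing the multiplicative data $(\CP,f,g)$.

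The quality comparison is then a direct computation. Since the product over $p\in\CP$ in the definition of $q(\cdot)$ is common to $G$ and $G'$, and since $\delta=\mu(\CE)/(\mu(\CV)\mu(\CW))$ while $\delta(G')=\mu(\CE(\CA,\CB))/(\mu(\CA)\mu(\CB))$, we obtain
\[
\frac{q(G')}{q(G)}
= \bggg(\frac{\mu(\CE(\CA,\CB))}{\mu(\CE)}\bggg)^{2+\tau}
\bggg(\frac{\mu(\CV)\mu(\CW)}{\mu(\CA)\mu(\CB)}\bggg)^{1+\tau}.
\]
Raising the failure hypothesis to the $(2+\tau)$-th power shows the first factor strictly exceeds $\eta^{2+2\tau}$, while the size constraints on $\CA,\CB$ give $(\mu(\CV)\mu(\CW)/(\mu(\CA)\mu(\CB)))^{1+\tau}\ge \eta^{-2(1+\tau)}=\eta^{-(2+2\tau)}$. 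Multiplying the two estimates gives $q(G')/q(G)>1$ strictly. Strictness of the vertex inclusions then follows from $\eta<1$: indeed $\mu(\CA)\le \eta\mu(\CV)<\mu(\CV)$ forces $\CA\subsetneq\CV$, and similarly $\CB\subsetneq\CW$, establishing (b).

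There is no serious obstacle here; the entire proof is essentially this single computation. The only point worth noting is that the exponent $(2+2\tau)/(2+\tau)$ appearing in conclusion (a) is calibrated exactly so that the $\eta^{2+2\tau}$ gain from concentration of edges (entering with exponent $2+\tau$ because quality features $\delta^{2+\tau}$) precisely balances the $\eta^{-(2+2\tau)}$ loss from shrinking the vertex sets (entering with exponent $1+\tau$, since after cancellation the $\mu(\CV)\mu(\CW)$ factors combine with the denominator of $\delta^{2+\tau}$). This is the unique threshold at which the dichotomy is both sharp and non-vacuous, and every strict inequality in the failure hypothesis propagates cleanly through the computation.
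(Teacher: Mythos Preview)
Your proof is correct and follows essentially the same approach as the paper's (which defers to Lemma~11.5 of \cite{KM} after noting the $\eta=1$ case is trivial). The computation showing $q(G')/q(G)>1$ via the exponent calibration is precisely the intended argument, and the strict inclusions follow from $\eta<1$ together with $\mu(\CV),\mu(\CW)>0$ (guaranteed by $\delta>0$).
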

	
	\begin{proof} When $\eta=1$, case (a) holds trivially. When $\eta<1$, the claimed result is  Lemma 11.5 in \cite{KM} with the obvious changes to account for the modified definition of $q(G)$. 
	\end{proof}

	\begin{lemma}[Subgraph with few edges between all small sets]\label{lem:NoSmallSetEdges}
		Let	$G=(\mu,\CV,\CW,\CE,\CP,f,g)$ be a GCD graph with edge density $\delta>0$, and let $\eta\in(0,1]$. Then there is a GCD subgraph $G'=(\mu,\CV',\CW',\CE',\CP,f,g)$ of $G$ with edge density $\delta'>0$ such that both of the following hold:
		\begin{enumerate}[label=(\alph*)]
			\item$q(G')\geqslant q(G)>0$.
			\item For all sets $\CA\subseteq\CV'$ and $\CB\subseteq\CW'$ such that $\mu(\CA)\leqslant \eta\cdot \mu(\CV')$ and $\mu(\CB)\leqslant \eta\cdot \mu(\CW')$, we have $\mu(\CE'\cap(\CA\times\CB))\leqslant \eta^{(2+2\tau)/(2+\tau)}\mu(\CE')$.
		\end{enumerate}
	\end{lemma}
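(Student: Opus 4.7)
The plan is to iterate Lemma \ref{lem:SmallSetEdges} and use the finiteness of $\CV$ and $\CW$ to guarantee termination.

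Set $G_0 := G$ and suppose that GCD subgraphs $G_0, G_1, \dots, G_i$ of $G$ have been constructed, each with the same multiplicative data $(\CP, f, g)$, with strictly increasing quality $q(G_0) < q(G_1) < \dots < q(G_i)$, and with strictly decreasing vertex sets $\CV \supsetneq \CV_1 \supsetneq \dots \supsetneq \CV_i$ and $\CW \supsetneq \CW_1 \supsetneq \dots \supsetneq \CW_i$. Since $q(G_i) \geqslant q(G_0) = q(G) > 0$, the edge density $\delta_i$ of $G_i$ must be strictly positive (otherwise $\mu(\CE_i) = 0$ and the quality would vanish). We may therefore apply Lemma \ref{lem:SmallSetEdges} to $G_i$ with the parameter $\eta$.

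If conclusion (a) of Lemma \ref{lem:SmallSetEdges} holds for $G_i$, we halt the iteration and take $G' := G_i$; this graph has the same multiplicative data as $G$, its quality satisfies $q(G') \geqslant q(G) > 0$ (and hence $\delta' > 0$), and by construction property (b) of Lemma \ref{lem:NoSmallSetEdges} holds. If instead conclusion (b) of Lemma \ref{lem:SmallSetEdges} holds for $G_i$, then we obtain a GCD subgraph $G_{i+1}$ of $G_i$ (and hence of $G$) with the same multiplicative data $(\CP, f, g)$, with $\CV_{i+1} \subsetneq \CV_i$ and $\CW_{i+1} \subsetneq \CW_i$, and with $q(G_{i+1}) > q(G_i)$, so the inductive hypotheses persist.

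Because $\CV$ and $\CW$ are finite, the strictly decreasing chain of vertex sets must terminate after finitely many steps, so we are eventually forced into conclusion (a) of Lemma \ref{lem:SmallSetEdges}, which yields the desired $G'$. The only point that needs a little care is verifying that at every step we may legitimately invoke Lemma \ref{lem:SmallSetEdges}, i.e.\ that $\delta_i > 0$; but this is immediate from $q(G_i) > 0$ together with the formula for the quality function, which vanishes whenever the edge density does. No genuine obstacle arises: the argument is a direct finite induction, entirely parallel to how Lemma \ref{lem:HighDegreeSubgraph} was deduced from its single-step counterpart (Lemma \ref{lem:HighDegree}) via maximality.
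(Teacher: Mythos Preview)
Your proposal is correct and is exactly the intended argument: the paper's own proof consists of the single line ``By iterating Lemma \ref{lem:SmallSetEdges},'' which your write-up simply unpacks. The termination argument via strictly shrinking finite vertex sets and the positivity of $\delta_i$ via $q(G_i)>0$ are precisely the details one fills in.
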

	
	\begin{proof} 
		By iterating  Lemma \ref{lem:SmallSetEdges}.
	\end{proof}

		\begin{lemma}\label{lem:bound on edge set for anatomy} 
		Let $G=(\mu,\CV,\CW,\CE,\CP,f,g)$ be a GCD graph with $\CR^\flat(G)=\emptyset$ and edge density $\delta>0$. Let $p>C_6$ be a prime in $\CR(G)$. There exists an integer $k\ge0$ such that if we let $\CV'=\CV\setminus \CV_{p^k}$ and $\CW'=\CW\setminus \CW_{p^k}$, then one of the following holds:
		\begin{enumerate}[label=(\alph*)]
			\item We have
			\[
			\mu\bgg( \CE\bg((\CV\times \CW')\cup (\CV'\times \CW)  \bg)\bgg)  \le \frac{C_6}{10^{10}p}\cdot \mu(\CE) .
			\]
		\item The GCD subgraph $G'=(\mu,\CV',\CW',\CE',\CP,f,g)$ of $G$ with $\CE'=\CE(\CV',\CW')$ satisfies $q(G')> q(G)$.
	\end{enumerate}
	\end{lemma}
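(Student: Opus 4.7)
The plan is to exploit the hypothesis $p\in\CR^\sharp(G)$, which is valid because $\CR^\flat(G)=\emptyset$ and $p\in\CR(G)$. By definition of $\CR^\sharp(G)$, I would fix an integer $k\ge 0$ such that $\mu(\CV_{p^k})\ge(1-C_2/p)\mu(\CV)$, $\mu(\CW_{p^k})\ge(1-C_2/p)\mu(\CW)$, and $q(G_{p^a,p^b})<M\cdot q(G)$ for every $(a,b)\in\Z_{\ge0}^2$ with $a\neq b$. Taking $\CV'=\CV\setminus\CV_{p^k}$ and $\CW'=\CW\setminus\CW_{p^k}$, we then have $\mu(\CV')\le C_2\mu(\CV)/p$ and $\mu(\CW')\le C_2\mu(\CW)/p$.

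Next, I would decompose
\[
\CE\bigl((\CV\times\CW')\cup(\CV'\times\CW)\bigr)=\CE(\CV_{p^k},\CW')\sqcup\CE(\CV',\CW_{p^k})\sqcup\CE(\CV',\CW')
\]
and bound the three pieces. For the first, $\CE(\CV_{p^k},\CW')=\bigsqcup_{\ell\ne k}\CE_{p^k,p^\ell}$, I apply Lemma \ref{lem:InducedGraphs} together with $q(G_{p^k,p^\ell})<M\cdot q(G)$ to get
\[
\frac{\mu(\CE_{p^k,p^\ell})}{\mu(\CE)}\ll \frac{M^{1/(2+\tau)}C_2^{(1+\tau)/(2+\tau)}}{p^{(1+\tau+|k-\ell|)/(2+\tau)}},
\]
where I used $\mu(\CW_{p^\ell})\le C_2\mu(\CW)/p$ (since $\ell\neq k$) and the trivial $\mu(\CV_{p^k})\le\mu(\CV)$. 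Summing the resulting geometric series over $\ell\neq k$ (convergent because $p>C_6$) produces
\[
\mu\bigl(\CE(\CV_{p^k},\CW')\bigr)\ll\sqrt{MC_2^{\,2}}\cdot\mu(\CE)/p\le C_6^{1/2}\mu(\CE)/(10^5 p),
\]
using $C_6\ge C_4=10^{10}M^2C_2^{\,2}$. A symmetric argument, or an appeal to Lemma \ref{lem:UnbalancedSetEdges2}, yields the same bound for $\mu(\CE(\CV',\CW_{p^k}))$.

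Finally, I treat $\mu(\CE(\CV',\CW'))=\mu(\CE')$ by a dichotomy. If $\mu(\CE')\le C_6\mu(\CE)/(2\cdot 10^{10}p)$, then adding the three contributions gives conclusion (a). Otherwise $\mu(\CE')>C_6\mu(\CE)/(2\cdot 10^{10}p)$, and I verify conclusion (b): since $G$ and $G'$ share the multiplicative data $(\CP,f,g)$, the associated product factors cancel in the quality ratio, leaving
\[
\frac{q(G')}{q(G)}=\left(\frac{\mu(\CE')}{\mu(\CE)}\right)^{2+\tau}\left(\frac{\mu(\CV)\mu(\CW)}{\mu(\CV')\mu(\CW')}\right)^{1+\tau}\ge\left(\frac{\mu(\CE')}{\mu(\CE)}\right)^{2+\tau}\left(\frac{p}{C_2}\right)^{2(1+\tau)}.
\]
So $q(G')>q(G)$ whenever $\mu(\CE')/\mu(\CE)>(C_2/p)^{(2+2\tau)/(2+\tau)}$; the bound $C_6\ge 4\cdot 10^{10}C_2^{\,2}$ (which follows from $C_6\ge C_4$ and $M\ge 2$) shows the running hypothesis $\mu(\CE')>C_6\mu(\CE)/(2\cdot 10^{10}p)$ is strictly stronger than this threshold.

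The main obstacle, though routine, is the bookkeeping of the constants $C_2,C_4,C_6$ and the parameter $M$: one must verify that $\sqrt{MC_2^{\,2}}\le C_6/(10^{10})$ so that $N_1+N_2$ is absorbed by the target bound, and simultaneously that $(C_2/p)^{(2+2\tau)/(2+\tau)}$ sits below $C_6/(2\cdot 10^{10}p)$ so that the case split on $\mu(\CE')$ covers both possibilities cleanly. Both inequalities follow from $C_4=10^{10}M^2C_2^{\,2}\le C_6$, so no further assumption on the prime $p$ beyond $p>C_6$ is needed.
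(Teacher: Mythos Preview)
Your proposal is correct and follows essentially the same route as the paper. The only organizational differences are that the paper invokes Lemma~\ref{lem:SmallSetEdges} for the $\CE(\CV',\CW')$ dichotomy (where you compute the quality ratio directly, which is exactly what that lemma's proof does), and the paper splits the off-diagonal contribution $\sum_{\ell\neq k}\mu(\CE_{p^k,p^\ell})$ into the ``near'' terms $|k-\ell|=1$ (bounded individually via Lemma~\ref{lem:InducedGraphs}) and the ``far'' terms $|k-\ell|\ge 2$ (packaged into Lemmas~\ref{lem:UnbalancedSetEdges1}--\ref{lem:UnbalancedSetEdges2}), whereas you treat all $\ell\neq k$ in a single geometric series; both routes give the same $O(\sqrt{MC_2^2}/p)$ bound and the constant bookkeeping is as you describe.
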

	
	\begin{proof}
		Let $p>C_6$ be a prime in $\CR(G)$. Since $\CR^\flat(G)=\emptyset$, we must have $p\in\CR^\sharp(G)$. Therefore there exists an integer $k\ge0$ such that	
		\begin{equation}
			\label{eq:p^k divides almost all}
			\frac{\mu(\CV_{p^k})}{\mu(\CV)}\ge 1-\frac{C_2}{p}\quad\text{and}\quad
			\frac{\mu(\CW_{p^k})}{\mu(\CW)} \ge 1-\frac{C_2}{p},
		\end{equation}
		and 
		\begin{equation}
			\label{eq:no good quality increment}
			q(G_{p^a,p^b})<M\cdot q(G)
			\quad\text{for all}\ (a,b)\in\Z_{\ge0}^2\ \text{with}\ a\neq b.
		\end{equation}
		In particular, we have 
		\[
			\frac{\mu(\CV')}{\mu(\CV)}\le \frac{C_2}{p}\quad\text{and}\quad
		\frac{\mu(\CW')}{\mu(\CW)} \le \frac{C_2}{p}.
		\]
		By the proof of Lemma \ref{lem:SmallSetEdges} applied with $\eta=\min\{1,C_2/p\}$ we see that one of the following holds: 
		\begin{enumerate}
			\item $\mu(\CE')\le \mu(\CE) \cdot (C_2/p)^{\frac{2+2\tau}{2+\tau}}$;
			\item $q(G')>q(G)$.
		\end{enumerate}
		If (b) holds, we are done, so let us assume that (a) holds. In particular, we then find that
		\begin{equation}
			\label{eq:bound on edges from small sets}	
		\mathop{\sum\sum}_{a, b\in\Z_{\ge0}\setminus\{k\}} \mu(\CE_{p^a,p^b}) = \mu(\CE')
		\le \mu(\CE) \cdot \bggg(\frac{C_2^{2+2\tau}}{p^\tau}\bggg)^{\frac{1}{2+\tau}} \cdot \frac{1}{p} \le \mu(\CE) \cdot \frac{1}{10^{10}p} 
		\end{equation}
		since $p>C_6\ge C_2^{10/\tau}$ and $\tau\le 0.01$  (recall \eqref{eq:CDefs}). Moreover, since $p>C_6\ge C_4$, we may apply Lemmas \ref{lem:UnbalancedSetEdges1} and \ref{lem:UnbalancedSetEdges2} with $r=1$. Relation \eqref{eq:no good quality increment} tells us that we must in case (b) of these two lemmas. We must therefore have that
		\begin{equation}
			\label{eq:bound on remote edge sets}
			\sum_{\substack{b\ge0\\ |k-b|\ge2}} \mu(\CE_{p^k,p^b}) \le \frac{\mu(\CE)}{4p}
			\quad\text{and}\quad
			\sum_{\substack{a\ge0\\ |a-k|\ge2}} \mu(\CE_{p^a,p^k}) \le \frac{\mu(\CE)}{4p} .
		\end{equation}
		In addition, if we take $(a,b)=(k,k+1)$ in \eqref{eq:no good quality increment} and we use Lemma \ref{lem:InducedGraphs}, we find that
		\[
		M>\frac{q(G_{p^k,p^{k+1}})}{q(G)}
		\ge \bigg(\frac{\mu(\CE_{p^k,p^{k+1}})}{\mu(\CE)}\bigg)^{2+\tau}
		\bigg(\frac{\mu(\CV)}{\mu(\CV_{p^k})}\bigg)^{1+\tau}
		\bigg(\frac{\mu(\CW)}{\mu(\CW_{p^{k+1}})}\bigg)^{1+\tau} p .
		\]
		Using \eqref{eq:p^k divides almost all}, we find that $\mu(\CW)/\mu(\CW_{p^{k+1}})\ge p/C_2$. In addition, we have the trivial lower bound $\mu(\CV)/\mu(\CV_{p^k})\ge1$. Therefore we conclude that
		\[
		M>\bigg(\frac{\mu(\CE_{p^k,p^{k+1}})}{\mu(\CE)}\bigg)^{2+\tau} \frac{p^{2+\tau}}{C_2^{1+\tau}},
		\]
		whence
		\begin{equation}
			\label{eq:bound on asymmetric edges1}
		\mu(\CE_{p^k,p^{k+1}}) \le \mu(\CE) \cdot \frac{(MC_2^{1+\tau})^{\frac{1}{2+\tau}}}{p} \le \mu(\CE)\cdot \frac{C_6}{10^{11} p} 
		\end{equation}
		since $C_6\ge C_4=10^{10}M^2C_2^2\ge 10^{12}$ from \eqref{eq:CDefs}. Similarly, we find that
		\begin{equation}
			\label{eq:bound on asymmetric edges2}
		\mu(\CE_{p^k,p^{k-1}}),\ 	\mu(\CE_{p^{k+1},p^k}),\ 	\mu(\CE_{p^{k-1},p^k}) \le  \mu(\CE)\cdot \frac{C_6}{10^{11} p} ,
		\end{equation}
			with the convention that $\CE_{p^0,p^{-1}}=\CE_{p^{-1},p^0}=\emptyset$. 		Combining \eqref{eq:bound on edges from small sets}, \eqref{eq:bound on remote edge sets}, \eqref{eq:bound on asymmetric edges1} and \eqref{eq:bound on asymmetric edges2} completes the proof of the lemma.
	\end{proof}

	\begin{cororollary}\label{cor:primes not dividing both}
		Let $G=(\mu,\CV,\CW,\CE,\CP,f,g)$, $p$ and $G'$ be as in Lemma \ref{lem:bound on edge set for anatomy}. Then one of the following holds:
		\begin{enumerate}[label=(\alph*)]
		\item $\ds
		\mu\Bigl(\Bigl\{(v,w)\in\CE: p|\frac{v w}{\gcd(v,w)^2}\Bigr\}\Bigr) 
		\le \frac{C_6}{10^{10} p} \cdot \mu(\CE)$;
		\item $q(G')> q(G)$.
		\end{enumerate}
	\end{cororollary}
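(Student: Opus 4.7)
Let $k\in\Z_{\ge0}$ and $\CV'=\CV\setminus \CV_{p^k}$, $\CW'=\CW\setminus \CW_{p^k}$ be as supplied by Lemma \ref{lem:bound on edge set for anatomy}, and let $G'=(\mu,\CV',\CW',\CE(\CV',\CW'),\CP,f,g)$. If alternative (b) of that lemma holds, then $q(G')>q(G)$ and we are done. So assume alternative (a) holds, namely
\[
\mu\bg(\CE\bg((\CV\times \CW')\cup(\CV'\times\CW)\bg)\bg)\le \frac{C_6}{10^{10}p}\cdot \mu(\CE).
\]

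It therefore suffices to show the set inclusion
\[
\Bigl\{(v,w)\in\CE:\ p\,\Big|\,\frac{vw}{\gcd(v,w)^2}\Bigr\}
\ \subseteq\ \CE\bg((\CV\times\CW')\cup(\CV'\times\CW)\bg).
\]
Let $(v,w)\in\CE$ satisfy $p\mid vw/\gcd(v,w)^2$. Writing $a=\nu_p(v)$ and $b=\nu_p(w)$, the condition $p\mid vw/\gcd(v,w)^2$ is equivalent to $a+b-2\min\{a,b\}\ge 1$, i.e.\ $a\ne b$. Consequently, at most one of the equalities $a=k$ and $b=k$ can hold, which means that either $v\notin\CV_{p^k}$ (so $v\in\CV'$) or $w\notin\CW_{p^k}$ (so $w\in\CW'$). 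Hence $(v,w)\in(\CV'\times\CW)\cup(\CV\times\CW')$, as claimed.

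Combining the inclusion with the bound from alternative (a) yields conclusion (a) of the corollary.
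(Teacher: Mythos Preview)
Your proof is correct and follows essentially the same approach as the paper: assume (b) fails so that alternative (a) of Lemma \ref{lem:bound on edge set for anatomy} holds, then observe that $p\mid vw/\gcd(v,w)^2$ forces $\nu_p(v)\neq\nu_p(w)$, so the pair $(v,w)$ cannot lie in $\CV_{p^k}\times\CW_{p^k}$ and hence belongs to $(\CV\times\CW')\cup(\CV'\times\CW)$. The paper's version is slightly terser but the argument is identical.
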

	
	\begin{proof} Let $k\in\Z_{\ge0}$, $\CV'$ and $\CW'$ be as in Lemma \ref{lem:bound on edge set for anatomy}. Assume that conclusion (b) is false. Then conclusion (a) of Lemma \ref{lem:bound on edge set for anatomy} must be true. In addition, note that if $p|v w/\gcd(v,w)^2$, then $p^j\|v$ and $p^\ell\|w$ for some $j\ne \ell$. In particular we cannot have $p^k \|v$ and $p^k\|w$.  Thus
		\[
			\mu\Bigl(\Bigl\{(v,w)\in\CE: p|\frac{v w}{\gcd(v,w)^2}\Bigr\}\Bigr)
			\le \mu\bgg( \CE \bg( (\CV\times\CW')\cup (\CV'\times \CW)\bg)\bgg)  
			\le \frac{C_6}{10^{10} p} \cdot \mu(\CE) ,
		\]
		as claimed.
	\end{proof}

	\section{Proof of the anatomical Lemmas \ref{lem:Cosmetic-L} and \ref{lem:Cosmetic-omega}}\label{sec:proof-anatomy-lemma}

	\begin{proof}[Proof of Lemma \ref{lem:Cosmetic-L}]		
		For brevity, let
		\[
		L_t'(v,w) = \sum_{\substack{p|v w/\gcd(v,w)^2\\p\in \CR(G),\ p>t}}\frac{1}{p}.
		\]
		Since we have assumed that $G$ is maximal, conclusion (b) of Corollary \ref{cor:primes not dividing both} cannot hold, and thus
		\begin{equation}
		\label{eq:not too many assymetric vertices}
		\mu\Bigl(\Bigl\{(v,w)\in\CE: p|\frac{v w}{\gcd(v,w)^2}\Bigr\}\Bigr) 
		\le \frac{C_6}{10^{10}p} \cdot \mu(\CE)
		\end{equation}
		for every prime $p>C_6$ lying in $\CR(G)$. Consequently, 
		\begin{align*}
		\sum_{(v,w)\in\CE } \mu(v)\mu(w) 	L_t'(v,w)
			&=   	\sum_{\substack{p\in \CR(G) \\ p>t}}\frac{1}{p}\cdot 
		\mu\Bigl(\Bigl\{(v,w)\in\CE: p|\frac{v w}{\gcd(v,w)^2}\Bigr\}\Bigr)  \\
			&\le \sum_{p>t} \frac{C_6\mu(\CE)}{10^{10}p^2}<\frac{C_6\mu(\CE)}{10^{10}t}
			\le \frac{\mu(\CE)}{10^{10}s}
		\end{align*}
		by Corollary \ref{cor:primes not dividing both} and our assumption that $t\ge C_6s\ge C_6$  (recall \eqref{eq:CDefs}). Now, let us define
		\[
		\CE':=\{(v,w)\in\CE:  	L_t'(v,w) \le 1/(2s)\}.
		\]
		Evidently, we have that
		\[
		\mu(\CE\setminus\CE')=\sum_{\substack{(v,w)\in\CE \\ 	L_t'(v,w)>1/(2s)}}\mu(v)\mu(w)< 2s \sum_{(v,w)\in\CE } \mu(v)\mu(w) 	L_t'(v,w) < \frac{\mu(\CE)}{1000}.
		\]
		Thus $\mu(\CE')\ge \frac{999}{1000}\mu(\CE)$. We then take $G':=(\mu,\CV,\CW,\CE',\CP,f,g)$ and note that 
		\[
		\frac{q(G')}{q(G)}=\bggg( \frac{\mu(\CE')}{\mu(\CE)}\bggg)^{2+\tau}\ge \frac{1}{2} .
		\]
		Hence, since $\CE'\subseteq\CE\subseteq \{(v,w)\in \CV\times\CW: L_t(v,w)\ge 1/s\}$, for any $(v,w)\in\CE'$ we have
		\[
		\sum_{\substack{p|v w/\gcd(v,w)^2\\p\notin \CR(G),\ p>t}}\frac{1}{p}= L_t(v,w)-L_t'(v,w) 
		\ge \frac{1}{s}-\frac{1}{2s}=\frac{1}{2s}.
		\]
		This completes the proof.
	\end{proof}
	
		\begin{proof}[Proof of Lemma \ref{lem:Cosmetic-omega}]		
		For brevity, let
		\[
		\omega_t'(v,w) = \sum_{\substack{p|v w/\gcd(v,w)^2\\p\in \CR(G),\ C_6<p\le t}}  1. 
		\]
		As before, using the maximality of $G$ yields 	\eqref{eq:not too many assymetric vertices} for each prime $p>C_6$ lying in $\CR(G)$. Hence,
		\begin{align*}
			\sum_{(v,w)\in\CE } \mu(v)\mu(w) 	\omega_t'(v,w)
			&=   	\sum_{\substack{p\in \CR(G) \\ C_6<p\le t}}1\cdot 
			\mu\Bigl(\Bigl\{(v,w)\in\CE: p|\frac{v w}{\gcd(v,w)^2}\Bigr\}\Bigr)  \\
			&\le \sum_{C_6<p\le t} \frac{C_6\mu(\CE)}{10^{10} p} \\
			&\le \Bigl(\log\log{t}-\log\log{C_6}+\frac{1}{2(\log{t})^2}+\frac{1}{2(\log{C_6})^2}\Bigr)\frac{C_6\mu(\CE)}{10^{10}}\\
			&\le \frac{C_6\log\log t}{10^{10}} \cdot \mu(\CE) ,
		\end{align*}
		where we used \cite[Theorem 5]{RS} in the third line and the fact $C_6>10^{10}$ in the final line. Now, let us define
		\[
		\CE':=\Bigl\{(v,w)\in\CE:  	\omega_t'(v,w) \le \frac{C_6}{10^9}\log\log t\Bigr\}.
		\]
		Evidently, we have that
		\[
		\mu(\CE\setminus\CE')=\sum_{\substack{(v,w)\in\CE \\ 	\omega_t'(v,w)>C_610^{-9} \log\log t} }\mu(v)\mu(w)< \sum_{(v,w)\in\CE } \mu(v)\mu(w)\cdot  \frac{\omega_t'(v,w)}{C_6 10^{-9} \log\log t} \le \frac{\mu(\CE)}{10}.
		\]
		Thus $\mu(\CE')\ge 9\mu(\CE)/10$. We then take $G':=(\mu,\CV,\CW,\CE',\CP,f,g)$ and note that 
		\[
		\frac{q(G')}{q(G)}=\Bigl(\frac{\mu(\CE')}{\mu(\CE)}\Bigr)^{2+\tau}\ge \frac{1}{2} .
		\]
		By assumption we have $\CE\subseteq \{(v,w)\in \CV\times\CW: \omega_t(v,w)\ge U\}$. Thus, since $\CE'\subseteq\CE$, for any $(v,w)\in\CE'$ we have
		\[
		\sum_{\substack{p|v w/\gcd(v,w)^2\\p\notin \CR(G),\ C_6<p\le t}}1 \ge \omega_t(v,w)-\omega_t'(v,w) -C_6 
		\ge U-\frac{C_6}{10^9}\log\log t-C_6 \ge \frac{U}{2}
		\]
		since we have assumed that $U\ge 2C_6\log\log t\ge 2C_6^2$. This completes the proof.
	\end{proof}

	\section{Proof of Proposition \ref{prop:IterationStep1}} \label{sec:IterationStep1}
	\begin{lemma}[Bounds on edge sets]\label{lem:EdgeSets}
		Consider a GCD graph $G=(\mu,\CV,\CW,\CE,\CP,f,g)$ with edge density $\delta>0$ and a prime $p\in\CR(G)$. 
		For each $k,\ell\in\Z_{\geqslant0}$, let
		\[
		\alpha_k=\frac{\mu(\CV_{p^k})}{\mu(\CV)}
		\quad\text{and}\quad
		\beta_\ell=\frac{\mu(\CW_{p^\ell})}{\mu(\CW)}.
		\]
		Then there exist $k,\ell\in \Z_{\geqslant 0}$ such that $\alpha_k,\beta_\ell>0$ and
		\[
		\frac{\mu(\mathcal{E}_{p^k,p^\ell})}{\mu(\mathcal{E})}\geqslant \begin{cases}
			(\alpha_k\beta_k)^{(1+\tau)/(2+\tau)},\qquad &\text{if}\ \ k=\ell,\\
			\displaystyle \frac{\alpha_k(1-\beta_k)+\beta_k(1-\alpha_k)+\alpha_\ell(1-\beta_\ell)+\beta_\ell(1-\alpha_\ell)}{2^{|k-\ell|/20} \times C_1}, &\text{if}\ \ k\ne \ell .
		\end{cases}
		\]\\
		
	\end{lemma}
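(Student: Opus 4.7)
The plan is to argue by contradiction. Suppose no pair $(k,\ell)$ with $\alpha_k,\beta_\ell>0$ satisfies the claimed bound on $\mu(\mathcal{E}_{p^k,p^\ell})/\mu(\mathcal{E})$. Denoting this bound by $B(k,\ell)$ (with the case distinction $k=\ell$ versus $k\ne\ell$), summing all the reverse strict inequalities and using $\sum_{k,\ell}\mu(\mathcal{E}_{p^k,p^\ell})=\mu(\mathcal{E})$ reduces the task to proving
\[
S_1+S_2\le 1,\qquad S_1:=\sum_{k}(\alpha_k\beta_k)^{(1+\tau)/(2+\tau)},\quad S_2:=\sum_{k\ne\ell}B(k,\ell),
\]
which would yield the desired contradiction.

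For $S_2$, note that the numerator in $B(k,\ell)$ equals $a_k+a_\ell$ with $a_k:=\alpha_k(1-\beta_k)+\beta_k(1-\alpha_k)$. By the symmetry $(k,\ell)\leftrightarrow(\ell,k)$ together with the identity $\sum_k a_k=2(1-v)$, where $v:=\sum_k\alpha_k\beta_k$, and the geometric bound $\sum_{\ell\ne k}2^{-|\ell-k|/20}\le 60$, one obtains
\[
S_2\le \frac{240}{C_1}(1-v).
\]
Since $C_1=10^4/\tau$, this contribution is of order $O(\tau)$ and hence very small.

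For $S_1$, set $\gamma:=(1+\tau)/(2+\tau)$ and $c:=\gamma-\tfrac{1}{2}=\tau/(2(2+\tau))$. The estimate $(\alpha_k\beta_k)^\gamma=\sqrt{\alpha_k\beta_k}\cdot (\alpha_k\beta_k)^c \le \sqrt{\alpha_k\beta_k}\cdot u^c$ (where $u:=\max_k\alpha_k\beta_k$, valid since $c>0$ and $\alpha_k\beta_k\le u$), summed over $k$, gives
\[
S_1\le v_0\, u^c,\qquad v_0:=\sum_k\sqrt{\alpha_k\beta_k}\le 1,
\]
the final bound being Cauchy--Schwarz applied to $\sum_k\sqrt{\alpha_k}\sqrt{\beta_k}$ together with $\sum_k\alpha_k=\sum_k\beta_k=1$. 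A further application of Cauchy--Schwarz also gives $v\le v_0\sqrt{u}$.

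To combine, write $1-v_0 u^c=(1-v_0)+v_0(1-u^c)$ and use the concavity estimate $1-u^c\ge c(1-u)$ valid for $u\in[0,1]$ and $c\in(0,1)$. The required inequality $S_1+S_2\le 1$ is thus reduced to
\[
(1-v_0)+c\,v_0(1-u)\ge \frac{240(1-v)}{C_1}.
\]
The key quantitative input is that $c\, C_1=10^4/(2(2+\tau))\asymp 10^4$ is comfortably larger than the numerical constant $240$, so the left-hand side dominates in generic regimes; the relations $v\le v_0\sqrt{u}$ and $v_0\le 1$ then close out the inequality by a short case analysis.

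The main obstacle is the near-concentrated regime $u\to 1$, where the estimate $S_1\le v_0 u^c$ is nearly tight and the savings from $u^c<1$ are small. Here one must exploit that $u$ close to $1$ forces $\alpha_{k_0},\beta_{k_0}$ close to $1$ at some common index $k_0$ (using the hypothesis $p\in\mathcal{R}(G)$ to exclude the trivial configurations $\alpha_0=1$ or $\beta_0=1$), which in turn forces $v\to 1$ and makes the off-diagonal bound on $S_2$ vanish. Balancing these two effects in the near-saturation limit is the delicate step; away from this limit, the slack in $cC_1\gg 240$ provides abundant margin.
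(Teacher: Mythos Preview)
Your setup and the bounds on $S_1$ and $S_2$ are exactly those of the paper's proof, so the approach is the same. The gap is in the final step, which you leave incomplete and overcomplicate with a spurious ``near-concentrated obstacle''.

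The observation you are missing is simply that $v=\sum_k\alpha_k\beta_k\geqslant \max_k\alpha_k\beta_k=u$, since $u$ is one of the summands defining $v$. Hence $1-v\leqslant 1-u$, and your bound on $S_2$ immediately gives $S_2\leqslant \tfrac{240}{C_1}(1-u)$. Combined with $S_1\leqslant u^c$ (drop the factor $v_0\leqslant 1$), it suffices to check that the single-variable function
\[
h(u)=u^c+\tfrac{240}{C_1}(1-u)
\]
satisfies $h(u)\leqslant 1$ on $[0,1]$. Its derivative $h'(u)=c\,u^{c-1}-240/C_1$ is decreasing on $(0,1]$ and satisfies $h'(1)=c-240/C_1>0$ (since $c=\tau/(2(2+\tau))>\tau/5$ while $240/C_1=0.024\tau$), so $h$ is increasing on $[0,1]$ and its maximum is $h(1)=1$. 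This is precisely how the paper finishes: no case analysis, no balancing, and no appeal to $p\in\CR(G)$ is needed for the inequality $S_1+S_2\leqslant 1$; that hypothesis only ensures $p\notin\CP$ so that the subgraphs $G_{p^k,p^\ell}$ are defined.
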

	\begin{proof} Let $\CX=\{(k,\ell)\in\Z_{\geqslant 0}^2: \alpha_k,\beta_\ell>0\}$. Note that if $(k,\ell)\in\Z_{\geqslant0}^2\setminus \CX$, then $\mu(\CE_{p^k,p^\ell})\leqslant \mu(\CV_{p^k})\mu(\CW_{p^\ell})=0$. Thus $\sum_{(k,\ell)\in \CX}\mu(\CE_{p^k,p^\ell}) = \mu(\CE)$. Hence, if we assume that the inequality in the statement of the lemma does not hold for any pair $(k,\ell)\in \CX$, we must have
		\[
		1=\sum_{(k,\ell)\in\CX}\frac{\mu(\CE_{p^k,p^\ell})}{\mu(\CE)}< S_1+S_2,
		\]
		where
		\[
		S_1 \coloneqq \sum_{k=0}^\infty(\alpha_k\beta_k)^{(1+\tau)/(2+\tau)}
		\]
		and
		\[
		S_2\coloneqq \mathop{\sum\sum}_{\substack{k,\ell\ge0\\ k\neq\ell}}
		\frac{\alpha_k(1-\beta_k)
			+\beta_k(1-\alpha_k)
			+\alpha_\ell(1-\beta_\ell)
			+\beta_\ell(1-\alpha_\ell)}{2^{|k-\ell|/20}\times C_1}.
		\]
		Thus, to arrive at a contradiction, it suffices to show that
		\[
		S_1+S_2\leqslant 1.
		\]
		First of all, note that $\sum_{|j|\ge1}2^{-|j|/20}=2/(2^{1/20}-1)\le 100$ and recall from \eqref{eq:CDefs} that $C_1=10^4/\tau$, whence	
		\begin{align*}
			S_2&\leqslant 
			\frac{\tau}{100}
			\bigg(\sum_{k=0}^\infty\alpha_k(1-\beta_k)
			+\sum_{k=0}^\infty\beta_k(1-\alpha_k)
			+\sum_{\ell=0}^\infty\alpha_\ell(1-\beta_\ell)
			+\sum_{\ell=0}^\infty\beta_\ell(1-\alpha_\ell)\bigg) \\
			&=\frac{\tau}{50}\bigg(\sum_{k=0}^\infty\alpha_k(1-\beta_k)
			+ \sum_{\ell=0}^\infty\beta_\ell(1-\alpha_\ell)\bigg).
		\end{align*}
		Observing that 
		\[
		1-\beta_k=\sum_{\ell\ge0,\ \ell\neq k} \beta_\ell
		\quad\text{and}\quad
		1-\alpha_\ell=\sum_{k\ge0,\ k\neq \ell} \alpha_k,
		\]
		we conclude that
		\[
		S_2\leqslant \frac{\tau}{25} \mathop{\sum\sum}_{\substack{k,\ell\geqslant0\\ k\neq\ell}} \alpha_k\beta_\ell
		=  \frac{\tau}{25}\bggg(1-\sum_{k=0}^\infty \alpha_k\beta_k\bggg) .
		\]
		
		Let us now study $S_1$. Since $\alpha_k,\beta_\ell$ are non-negative reals which sum to 1, there exists some $k_0\ge0$ such that 
		\[
		\gamma \coloneqq \max_{k\geqslant0}\alpha_k\beta_k= \alpha_{k_0}\beta_{k_0}.
		\]
		We thus find that 
		\[
		S_1 = \sum_{k=0}^\infty(\alpha_k\beta_k)^{\frac{1+\tau}{2+\tau}}\leqslant \gamma^{\frac{\tau}{2(2+\tau)}}\sum_{k=0}^\infty(\alpha_k\beta_k)^{\frac{1}{2}}\leqslant \gamma^{\frac{\tau}{2(2+\tau)}}\Bigl(\sum_{k=0}^\infty \alpha_k\Bigr)^{\frac{1}{2}}\Bigl(\sum_{\ell=0}^{\infty}\beta_\ell\Bigr)^{\frac{1}{2}}= \gamma^{\frac{\tau}{2(2+\tau)}}
		\]
		where we used the Cauchy--Schwarz inequality to bound $\sum_k (\alpha_k\beta_k)^{1/2}$ from above. 
		We also find that
		\[
		S_2\leqslant \frac{\tau}{25} \bggg(1- \sum_{k=0}^\infty \alpha_k\beta_k\bggg)
		\leqslant \frac{\tau}{25}(1-\gamma). 
		\]
		As a consequence, 
		\[
		S_1+S_2\leqslant \gamma^{\frac{\tau}{2(2+\tau)}}+\frac{\tau}{25}(1-\gamma).
		\]
		The function $x\mapsto x^{\frac{\tau}{2(2+\tau)}}+\frac{\tau}{25}(1-x)$ is increasing for $0\leqslant x\leqslant 1$, and so maximized at $x=1$. Thus we infer that $S_1+S_2\leqslant 1$ as required, completing the proof of the lemma.
	\end{proof}

	\begin{lemma}[Quality increment unless a prime power divides almost all]\label{lem:MainLem}
		Consider a GCD graph $G=(\mu,\CV,\CW,\CE,\CP,f,g)$ with edge density $\delta>0$ and a prime $p\in\CR(G)$ with $p>C_2$. Then one of the following holds:
		\begin{enumerate}
			\item There is a GCD subgraph $G'$ of $G$ with multiplicative data $(\CP',f',g')$ and edge density $\delta'>0$ such that:
			\begin{enumerate}
				\item $G'$ is $(\CP',f',g')$-maximal;
				\item $	\CP'=\CP\cup\{p\}$;
				\item $\CR(G')\subseteq \CR(G)\setminus\{p\}$;
				\item $q(G') \geqslant M^{\un_{f'(p)\neq g'(p)}} \cdot q(G)$;
				\item $p^{f'(p)}\|v$ and $p^{g'(p)}\| w$ for all $(v,w)\in \CV\times \CW$. 
			\end{enumerate}
			\item There is some $k\in\Z_{\geqslant0}$ such that
			\[
			\frac{\mu(\CV_{p^k})}{\mu(\CV)}\geqslant 1-\frac{C_2}{p}
			\quad\text{and}\quad
			\frac{\mu(\CW_{p^k})}{\mu(\CW)}\geqslant 1-\frac{C_2}{p}.
			\]
		\end{enumerate}
	\end{lemma}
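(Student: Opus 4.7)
The plan is to apply Lemma \ref{lem:EdgeSets} to $G$ and the prime $p$ to produce integers $k, \ell \in \Z_{\geq 0}$ with $\alpha_k := \mu(\CV_{p^k})/\mu(\CV) > 0$ and $\beta_\ell := \mu(\CW_{p^\ell})/\mu(\CW) > 0$, satisfying the stated lower bound on $\mu(\CE_{p^k, p^\ell})/\mu(\CE)$. I would then let $G'$ be a $(\CP \cup \{p\}, f_{p^k}, g_{p^\ell})$-maximal subgraph of the induced GCD graph $G_{p^k, p^\ell}$ (which exists by the remark after Definition \ref{maximal GCD graph}). By construction, $G'$ satisfies conclusions (1)(a)--(c) and (e) of the lemma, and by maximality $q(G') \geq q(G_{p^k, p^\ell})$; thus it remains to bound $q(G_{p^k,p^\ell})/q(G)$ from below using Lemma \ref{lem:InducedGraphs}.

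In the case $k = \ell$, inserting the bound $\mu(\CE_{p^k,p^k})/\mu(\CE) \geq (\alpha_k\beta_k)^{(1+\tau)/(2+\tau)}$ into Lemma \ref{lem:InducedGraphs} causes the $\alpha_k^{1+\tau}\beta_k^{1+\tau}$ factors to cancel exactly, yielding
\[
\frac{q(G_{p^k,p^k})}{q(G)} \geq \frac{1}{(1 - \un_{k \geq 1}/p)^2 (1 - 1/p^{1+\tau/4})^3} \geq 1.
\]
Since $f'(p) = g'(p) = k$, the required factor in conclusion (1)(d) is $M^0 = 1$, so we are done.

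In the case $k \neq \ell$, I use the four-term lower bound $\mu(\CE_{p^k,p^\ell})/\mu(\CE) \geq S/(2^{|k-\ell|/20} C_1)$, where $S = \alpha_k(1-\beta_k) + \beta_k(1-\alpha_k) + \alpha_\ell(1-\beta_\ell) + \beta_\ell(1-\alpha_\ell)$, and aim to show either $q(G_{p^k,p^\ell})/q(G) \geq M$ or that conclusion (2) of the lemma holds. Bounding $S \geq \alpha_k(1-\beta_k)$ and using $\beta_\ell \leq 1 - \beta_k$ (since $\ell \neq k$), Lemma \ref{lem:InducedGraphs} gives
\[
\frac{q(G_{p^k,p^\ell})}{q(G)} \geq \frac{\alpha_k(1-\beta_k) \cdot p^{|k-\ell|}}{C_1^{2+\tau} \cdot 2^{(2+\tau)|k-\ell|/20}(1-1/p^{1+\tau/4})^3},
\]
and an analogous bound holds with $\alpha_k(1-\beta_k)$ replaced by $\beta_\ell(1-\alpha_\ell)$ (using $\alpha_k \leq 1 - \alpha_\ell$). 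If conclusion (2) fails, then at each of $j = k, \ell$ either $\alpha_j < 1 - C_2/p$ or $\beta_j < 1 - C_2/p$, so one of the factors $(1-\beta_k)$ or $(1-\alpha_\ell)$ is $\geq C_2/p$ (possibly after swapping the roles of $k$ and $\ell$, or examining $G_{p^\ell, p^k}$ to pair with the remaining two summands of $S$). Since $p > C_2 = 10MC_1^3$, the factor $p^{|k-\ell|} \geq p$ dominates the pigeonhole loss $C_1^{2+\tau}$, and we obtain $q(G_{p^k,p^\ell})/q(G) \geq M$.

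The main obstacle is the case analysis in the last paragraph: the four ways conclusion (2) can fail at $\{k, \ell\}$ lead to several configurations, and in the stickiest scenarios (e.g., when $\alpha_k, \beta_\ell$ are both small while $\alpha_\ell, \beta_k$ are close to $1$) the displayed $T$-bound becomes vacuous, so one must instead analyze $G_{p^\ell, p^k}$ or exploit the other two summands $\beta_k(1-\alpha_k)$ and $\alpha_\ell(1-\beta_\ell)$ of $S$ (combined with suitable inequalities such as $1 - \alpha_k \geq \alpha_\ell$). The choice $C_2 = 10MC_1^3$ in \eqref{eq:CDefs} is engineered precisely so that, after absorbing the $C_1^{2+\tau}$ loss from Lemma \ref{lem:EdgeSets}'s pigeonhole and the mild growth $2^{(2+\tau)|k-\ell|/20}$, a single factor of $p > C_2$ supplies the required $M$-fold quality gain whenever conclusion (2) fails.
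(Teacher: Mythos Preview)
Your setup and the case $k=\ell$ match the paper exactly. In the case $k\ne\ell$, however, there is a genuine gap. After reaching
\[
\frac{q(G_{p^k,p^\ell})}{q(G)}\ \ge\ \frac{S^{2+\tau}}{C_1^{2+\tau}(\alpha_k\beta_\ell)^{1+\tau}}\cdot\Bigl(\frac{p}{2^{(2+\tau)/20}}\Bigr)^{|k-\ell|},
\]
you throw away the factor $S/( \alpha_k\beta_\ell)$ by only keeping $S\ge\alpha_k(1-\beta_k)$ (or $\beta_\ell(1-\alpha_\ell)$). That is too lossy: if $\alpha_k$ is tiny while $\alpha_\ell$ is near $1$ and $\beta_\ell<1-C_2/p$, conclusion~(2) fails but both $\alpha_k(1-\beta_k)$ and $\beta_\ell(1-\alpha_\ell)$ are negligible, and your displayed bound gives nothing. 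Your suggested fallback of ``examining $G_{p^\ell,p^k}$'' does not work either, since Lemma~\ref{lem:EdgeSets} supplies an edge-density lower bound only for the specific ordered pair $(k,\ell)$ it produced, not for $(\ell,k)$.

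The paper avoids this case analysis by arguing in the opposite direction. Assuming $q(G_{p^k,p^\ell})<M\,q(G)$, it keeps the full inequality $q(G')/q(G)\ge (S^2/\alpha_k\beta_\ell)\cdot p/C_1^{2+\tau}\cdot(\cdots)$ to deduce simultaneously $S\le C_2/(10p)$ and $S^2/(\alpha_k\beta_\ell)\le 1/10$. Then, combining $S\ge(\alpha_k+\beta_\ell)(1-\max\{\alpha_\ell,\beta_k\})$ with AM--GM $(\alpha_k+\beta_\ell)^2\ge 4\alpha_k\beta_\ell$, one gets $(1-\max\{\alpha_\ell,\beta_k\})^2\le S^2/(4\alpha_k\beta_\ell)\le 1/40$, so (say) $\beta_k\ge 1/2$; feeding this back into $S\ge\beta_k(1-\alpha_k)$ gives $\alpha_k\ge 1-C_2/p$, and then symmetrically $\beta_k\ge 1-C_2/p$, i.e.\ conclusion~(2). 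This two-step bootstrap is precisely the missing ingredient that your case analysis was groping for.
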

	
	\begin{proof} Let $\alpha_k$ and $\beta_\ell$ be defined as in the statement of Lemma \ref{lem:EdgeSets}. Consequently, there are $k,\ell\in\Z_{\geqslant0}$ such that $\alpha_k,\beta_\ell>0$ and
		\begin{equation}\label{eq:FirstPigeonholeArgument}
			\frac{\mu(\CE_{p^k,p^\ell})}{\mu(\CE)} 	
			\geqslant 	
			\begin{cases}
				(\alpha_k\beta_k)^{(1+\tau)/(2+\tau)},\qquad &\text{if}\ \ k=\ell,\\
				\displaystyle \frac{\alpha_k(1-\beta_k)+\beta_k(1-\alpha_k)+\alpha_\ell(1-\beta_\ell)+\beta_\ell(1-\alpha_\ell)}{2^{|k-\ell|/20}\times C_1}, &\text{if}\ \ k\ne \ell
			\end{cases}
		\end{equation}
		In particular, $\mu(\CE_{p^k,p^\ell})>0$, so that $G_{p^k,p^\ell}$ is a non-trivial GCD subgraph of $G$. 
		
		Let $(\CP',f',g')$ be the multiplicative data of $G_{p^k,p^\ell}$, and let $G'$ be a $(\CP',f',g')$-maximal GCD subgraph of $G_{p^k,p^\ell}$. We claim that either $q(G') \geqslant M^{\un_{k\neq \ell}} q(G)$, so that $G'$ satisfies conclusion (a) of the lemma, or that conclusion (b) holds.
		
		We separate two cases, according to whether $k=\ell$ or not.
		
		\bigskip
		
		\noindent 
		\textit{Case 1: $k=\ell$.} 	Lemma \ref{lem:InducedGraphs} and our lower bound $\mu(\CE_{p^k,p^k})/\mu(\CE)\geqslant(\alpha_k\beta_k)^{(1+\tau)/(2+\tau)}$ imply that
		\als{
			\frac{q(G')}{q(G)}	\ge 			\frac{q(G_{p^k,p^\ell}) }{q(G)}=\bigg(\frac{\mu(\CE_{p^k,p^k})}{\mu(\CE)}\bigg)^{2+\tau}\cdot
			\frac{1}{(\alpha_k\beta_k)^{1+\tau}}
			\cdot\frac{1}{(1-\un_{k\geqslant 1}/p)^2(1-1/p^{1+\tau/4})^{3}} \geqslant 1.
		}
		This establishes conclusion  (a) in this case, since $f'(p)=g'(p)=k$ and thus $\un_{f'(p)\ne g'(p)}=0$.

		\bigskip
		
		\noindent 
		\textit{Case 2: $k\ne \ell$}. 	As before, we use Lemma \ref{lem:InducedGraphs} and our lower bound on $\mu(\CE_{p^k,p^\ell})$ to find that
		\begin{align*}
			\frac{q(G')}{q(G)}
			&\ge \bigg(\frac{\mu(\CE_{p^k,p^\ell})}{\mu(\CE)}\bigg)^{2+\tau}
			(\alpha_k\beta_\ell)^{-1-\tau}
			\frac{p^{|k-\ell|}}{(1-1/p^{1+\tau/4})^{3}} \\
			&\geqslant\frac{S^{2+\tau}}{C_1^{2+\tau}(\alpha_k\beta_\ell)^{1+\tau}}
			\cdot \bggg(\frac{p}{2^{\frac{2+\tau}{20}}}\bggg)^{|k-\ell|},
		\end{align*}
		where
		\[
		S=\alpha_k(1-\beta_k)
		+\beta_k(1-\alpha_k)
		+\alpha_\ell(1-\beta_\ell)
		+\beta_\ell(1-\alpha_\ell).
		\]
		Note that 
		\begin{equation}
			\label{S lb1}
			S\geqslant \alpha_k(1-\beta_k)\geqslant \alpha_k\beta_\ell.
		\end{equation}
		Indeed, this follows by our assumption that $k\neq\ell$, which implies that $\beta_k+\beta_\ell\leqslant \sum_{j\ge 0}\beta_j=1$. Combining the above, we conclude that
		\begin{equation}
		\label{q'/q-lb} 
			 \frac{q(G')}{q(G)}
			\geqslant \frac{S^{2}}{\alpha_k\beta_\ell}
			\cdot \frac{1}{C_1^{2+\tau}} \cdot \bggg(\frac{p}{2^{\frac{2+\tau}{20}}}\bggg)^{|k-\ell|} .
		\end{equation}
		If $q(G')\ge M\cdot q(G)$, we are done. So let us assume that $q(G')<M\cdot q(G)$. Since $|k-\ell|\geqslant 1$, we must then have that
		\[
		S\leqslant \frac{S^{2}}{\alpha_k\beta_\ell}
		\leqslant M\cdot C_1^{2+\tau} \cdot  \bggg(\frac{2}{p}\bggg)^{|k-\ell|}
		\leqslant \frac{2MC_1^{2.01}}{p}\le \frac{C_2}{10p}
		\leqslant \frac{1}{10},
		\]
		where we used our assumption that $p\geqslant C_2 = 10MC_1^3$ for the second-to-last inequality  (recall \eqref{eq:CDefs}). In particular, this gives
		\begin{equation}\label{small quality}
			S\leqslant  \frac{C_2}{10p}
			\quad\text{and}\quad
			\frac{S^2}{\alpha_k\beta_\ell}\leqslant \frac{1}{10}.
		\end{equation}
		
		We note that
		\begin{equation}
			S\geqslant\alpha_k(1-\beta_k)+\beta_\ell(1-\alpha_\ell)\geqslant (\alpha_k+\beta_\ell)(1-\max\{\alpha_\ell,\beta_k\}).
			\label{eq:SBound}
		\end{equation}
		Thus by the arithmetic-geometric mean inequality, and relations \eqref{eq:SBound} and \eqref{small quality}, we have
		\[
		(1-\max\{\alpha_\ell,\beta_k\})^2\leqslant  \frac{(\alpha_k+\beta_\ell)^2}{4\alpha_k\beta_\ell}(1-\max\{\alpha_\ell,\beta_k\})^2\leqslant \frac{S^2}{4\alpha_k\beta_\ell}\leqslant \frac{1}{40}.
		\]
		In particular, $\max\{\alpha_\ell,\beta_k\}\geqslant 1/2$.
		
		We consider the case when $\beta_k\geqslant1/2$; the case with $\alpha_\ell\geqslant 1/2$ is entirely analogous with the roles of $\beta$ and $\alpha$ swapped, and the roles of $k$ and $\ell$ swapped. Thus, to complete the proof of the lemma, it suffices to show that
		\begin{equation}\label{small quality - large structure}
			\alpha_k,\beta_k
			\geqslant 1-\frac{C_2}{p}.
		\end{equation}
		The first inequality of \eqref{small quality} states that
		\[
		\alpha_k(1-\beta_k)
		+\beta_k(1-\alpha_k)
		+\alpha_\ell(1-\beta_\ell)
		+\beta_\ell(1-\alpha_\ell)
		\leqslant  \frac{C_2}{10p}.
		\]
		Since $\beta_k\geqslant1/2$ and $p\ge C_2$, we infer that
		\[
		1-\alpha_k\leqslant 2\beta_k(1-\alpha_k)\leqslant 
		\frac{2 C_2}{10p}\leqslant \frac{1}{5}. 
		\]
		In particular, $\alpha_k\geqslant 1-C_2/p$ and $\alpha_k\geqslant1/2$, whence
		\[
		1-\beta_k\leqslant 2\alpha_k(1-\beta_k)\leqslant 
		\frac{2C_2}{10p}\leqslant \frac{C_2}{p}.
		\]
		This completes the proof of \eqref{small quality - large structure} and hence of the lemma.
	\end{proof}

	\begin{proof}[Proof of Proposition \ref{prop:IterationStep1}]
		This follows almost immediately from Lemma \ref{lem:MainLem}. Since $\CR(G)\subseteq\{p> C_6\}$ by assumption, if $p\in\CR(G)$ then $p> C_6$. We have also assumed that $\CR^{\flat}(G)\ne\emptyset$. Consequently, there is a prime $p\in\CR^\flat(G)$ with $p> C_6> C_2$. We then have two cases:
		
		\medskip
		
		\noindent {\it Case 1:} there exists an integer $k\ge0$ such that 
		\[	
		\frac{\mu(\CV_{p^k})}{\mu(\CV)}\geqslant 1-\frac{C_2}{p}
		\quad\text{and}\quad
		\frac{\mu(\CW_{p^k})}{\mu(\CW)}\geqslant 1-\frac{C_2}{p}.
		\]
		Since $p\notin \CR^\sharp(G)$, there must exist some a pair of distinct integers $a,b\ge0$ such that $q(G_{p^a,p^b})\ge Mq(G)$. We then take $G'$ to be a $(\CP\cup\{p\},f_{p^a},g_{p^b})$-maximal GCD subgraph of $G_{p^a,p^b}$. To complete the proof of the proposition, note that the condition $\delta>0$ implies that $q(G)>0$. Therefore $q(G')\ge q(G_{p^a,p^b})\ge Mq(G)>0$, which also means that $\delta'>0$.
		
		\medskip
		
		\noindent {\it Case 2:} for every $k\in\Z_{\ge0}$, we have
		\begin{equation}
		\label{eq:Rflat - main case}
		\min\bggg\{ \frac{\mu(\CV_{p^k})}{\mu(\CV)}, \frac{\mu(\CW_{p^k})}{\mu(\CW)} \bggg\} < 1-\frac{C_2}{p}.
				\end{equation}
		 We may then apply Lemma \ref{lem:MainLem} with this choice of $p$. By \eqref{eq:Rflat - main case},  conclusion $(b)$ cannot hold, and so conclusion $(a)$ must hold. This then gives the result.
	\end{proof}

	\section{Proof of Proposition \ref{prop:SmallPrimes}}\label{sec:SmallPrime}

	In this section we prove Proposition \ref{prop:SmallPrimes}.

	\begin{lemma}[Small quality loss or prime power divides positive proportion]\label{lem:SmallPrime}
		Consider a GCD graph $G=(\mu,\CV,\CW,\CE,\CP,f,g)$ with edge density $\delta>0$, and let $p\in\CR(G)$ be a prime.  Then one of the following holds:
		\begin{enumerate}[label=(\alph*)]
			\item There is a GCD subgraph $G'$ of $G$ with multiplicative data $(\CP',f',g')$ and edge density $\delta'>0$ such that
			\[
			\CP'=\CP\cup\{p\},\quad
			\CR(G')\subseteq\CR(G)\setminus\{p\},\quad 
		q(G')\ge q(G)/C_3 .
			\]
			\item There is some $k\in\Z_{\geqslant0}$ such that
			\[
			\frac{\mu(\CV_{p^k})}{\mu(\CV)}\geqslant \frac{9}{10}
			\quad\text{and}\quad
			\frac{\mu(\CW_{p^k})}{\mu(\CW)}\geqslant \frac{9}{10}.
			\]
		\end{enumerate}
	\end{lemma}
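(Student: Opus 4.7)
The plan is to mimic the argument of Lemma \ref{lem:MainLem}, dropping the hypothesis $p>C_2$; its absence forces a quality loss by the factor $C_3$ in conclusion (a) rather than a gain of $M$. I would first apply Lemma \ref{lem:EdgeSets} to produce integers $k,\ell\in\Z_{\ge0}$ with $\alpha_k := \mu(\CV_{p^k})/\mu(\CV) >0$, $\beta_\ell := \mu(\CW_{p^\ell})/\mu(\CW) >0$, and a lower bound on $\mu(\CE_{p^k,p^\ell})/\mu(\CE)$. Setting $G'$ to be a $(\CP\cup\{p\},f_{p^k},g_{p^\ell})$-maximal GCD subgraph of $G_{p^k,p^\ell}$ automatically yields $\CP'=\CP\cup\{p\}$, $\CR(G')\subseteq\CR(G)\setminus\{p\}$, and $\delta'>0$; the only nontrivial point in conclusion (a) is the quality bound $q(G')\ge q(G)/C_3$, which it suffices to verify with $q(G')$ replaced by $q(G_{p^k,p^\ell})$.

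If $k=\ell$, Lemma \ref{lem:InducedGraphs} together with the estimate $\mu(\CE_{p^k,p^k})/\mu(\CE)\ge(\alpha_k\beta_k)^{(1+\tau)/(2+\tau)}$ from Lemma \ref{lem:EdgeSets} gives $q(G_{p^k,p^k})\ge q(G)$ exactly as in Case 1 of Lemma \ref{lem:MainLem}, and conclusion (a) follows immediately.

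If instead $k\ne\ell$, Lemma \ref{lem:InducedGraphs} combined with the second branch of Lemma \ref{lem:EdgeSets} yields, as in the proof of Lemma \ref{lem:MainLem},
\[
\frac{q(G_{p^k,p^\ell})}{q(G)} \ge \frac{S^{2+\tau}}{C_1^{2+\tau}(\alpha_k\beta_\ell)^{1+\tau}}\cdot\bigg(\frac{p}{2^{(2+\tau)/20}}\bigg)^{|k-\ell|},
\]
where $S=\alpha_k(1-\beta_k)+\beta_k(1-\alpha_k)+\alpha_\ell(1-\beta_\ell)+\beta_\ell(1-\alpha_\ell)$. Because $S\ge\alpha_k\beta_\ell$ as in \eqref{S lb1}, and because $p/2^{(2+\tau)/20}\ge 1$ for every prime $p\ge 2$ when $\tau<1/100$, the right-hand side is bounded below by $S^2/(\alpha_k\beta_\ell C_1^{2+\tau})$. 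If this quantity is at least $1/C_3$, then conclusion (a) holds. Otherwise $S^2/(\alpha_k\beta_\ell)\le C_1^{2+\tau}/C_3$; using $S\ge(\alpha_k+\beta_\ell)(1-\max\{\alpha_\ell,\beta_k\})$ together with AM-GM gives $(1-\max\{\alpha_\ell,\beta_k\})^2\le C_1^{2+\tau}/(4C_3)$. The choice $C_3=10^3C_1^3$ and the lower bound $C_1\ge 10^6$ (which follows from $\tau<1/100$) make this bound minuscule, so $\max\{\alpha_\ell,\beta_k\}\ge 9/10$. By symmetry assume $\beta_k\ge 9/10$; then $\beta_k(1-\alpha_k)\le S\le \sqrt{C_1^{2+\tau}/C_3}$ is likewise tiny, forcing $\alpha_k\ge 9/10$ and establishing conclusion (b) for this $k$.

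The main obstacle, as above, is the bookkeeping in the final chain of inequalities: one must verify that the constants $C_1$ and $C_3$ chosen in \eqref{eq:CDefs} really do force both $\max\{\alpha_\ell,\beta_k\}$ and subsequently the symmetric parameter past $9/10$. This is essentially the calculation performed in Lemma \ref{lem:MainLem} after the bound $S\le C_2/(10p)$, suitably adapted to the regime without the hypothesis $p>C_2$.
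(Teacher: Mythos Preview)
Your proposal is correct and follows essentially the same route as the paper's proof: apply Lemma \ref{lem:EdgeSets}, handle the diagonal case $k=\ell$ exactly as in Lemma \ref{lem:MainLem}, and in the off-diagonal case use the lower bound \eqref{q'/q-lb} (which needs no hypothesis on $p$) together with $S\ge\alpha_k\beta_\ell$ and $(p/2^{(2+\tau)/20})^{|k-\ell|}\ge1$ to force $S^2/(\alpha_k\beta_\ell)$ small when (a) fails, then extract $\alpha_k,\beta_k\ge 9/10$ from this. The only cosmetic differences are that the paper takes $G'=G_{p^k,p^\ell}$ directly rather than a maximal subgraph, and bounds $S$ via $S\le S^2/(\alpha_k\beta_\ell)\le C_1^3/C_3=10^{-3}$ rather than your $S\le\sqrt{C_1^{2+\tau}/C_3}$; both chains work with the constants in \eqref{eq:CDefs}.
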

	\begin{proof}
		Assume that conclusion $(a)$ does not hold, so we intend to establish $(b)$. For $k,\ell\in\Z_{\ge0}$, let $\mu(\CV_{p^k})=\alpha_k\mu(\CV)$ and $\mu(\CW_{p^\ell})=\beta_\ell\mu(\CW)$. We begin as in the proof of  Lemma \ref{lem:MainLem}, by considering $k,\ell\in\Z_{\ge0}$ satisfying \eqref{eq:FirstPigeonholeArgument} and the inequalities $\alpha_k,\beta_\ell>0$. We do not need to worry about the maximality of $G'$ here, so we shall simply take $G'=G_{p^k,p^\ell}$. In particular, $G'$ is a non-trivial GCD subgraph of $G$.
		
		We note that the proof of Lemma \ref{lem:MainLem} up to relation \eqref{q'/q-lb} requires no assumption on the size of $p$. Now, if $k=\ell$, then Case 1 of the proof of Lemma \ref{lem:MainLem} shows that conclusion $(a)$ must hold, contradicting our assumption. Therefore we may assume that $k\ne \ell$. Now, arguing as in Case 2 of the proof of Lemma \ref{lem:MainLem}, and setting
		\[
		S=\alpha_k(1-\beta_k)
		+\beta_k(1-\alpha_k)
		+\alpha_\ell(1-\beta_\ell)
		+\beta_\ell(1-\alpha_\ell),
		\]
		we infer that
		\[
		\frac{1}{C_3}\geqslant \frac{q(G')}{q(G)}
		\geqslant\frac{S^{2}}{\alpha_k\beta_\ell} \cdot \frac{1}{C_1^{2+\tau}}
		\cdot \bggg(\frac{p}{2^{  \frac{2+\tau}{20}  }}\bggg)^{|k-\ell|}\geqslant \frac{S^{2}}{\alpha_k\beta_\ell} \cdot \frac{1}{C_1^3}.
		\]
		Therefore we have that
		\[
		S\leqslant \frac{S^2}{\alpha_k\beta_\ell}\leqslant \frac{C_1^3}{C_3} \leqslant \frac{1}{10^3}.
		\]
		Since $S\geqslant (\alpha_k+\beta_\ell)(1-\max\{\alpha_\ell,\beta_k\})$, we have
		\[
		(1-\max\{\alpha_\ell,\beta_k\})^2\leqslant \frac{ (\alpha_k+\beta_\ell)^2}{4\alpha_k\beta_\ell}(1-\max\{\alpha_\ell,\beta_k\})^2\leqslant \frac{S^2}{4\alpha_k\beta_\ell}\leqslant \frac{1}{100},
		\]
		so $\max\{\alpha_\ell,\beta_k\}\geqslant 9/10$. We deal with the case when $\beta_k\geqslant 9/10$; the case with $\alpha_\ell\geqslant 9/10$ is entirely analogous with the roles of $k$ and $\ell$ and the roles of $\alpha$ and $\beta$ swapped. 
		
		Since $\beta_k\geqslant 9/10$, we have
		\[
		1-\alpha_k\leqslant 2\beta_k(1-\alpha_k)\leqslant 2S\leqslant \frac{2}{10^{3}}
		\]
		In particular, $\alpha_k \geqslant 9/10$ and so conclusion $(b)$ holds, as required.
	\end{proof}

	\begin{lemma}[Adding small primes to $\CP$]\label{lem:SmallIteration}
		Let $G=(\mu,\CV,\CW,\CE,\CP,f,g)$ be a GCD graph with edge density $\delta>0$. Let $p\in\CR(G)$ be a prime.
		
		Then there is a GCD subgraph $G'$ of $G$ with set of primes $\CP'$ and edge density $\delta'>0$ such that 
		\[
		\CP'=\CP\cup\{p\},
		\quad \CR(G')\subseteq\CR(G)\setminus\{p\},
		\quad 
		q(G')\ge q(G)/C_5.
		\]
	\end{lemma}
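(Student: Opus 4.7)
The plan is to iterate Lemma~\ref{lem:SmallPrime}, descending to progressively smaller GCD subgraphs via Lemmas~\ref{lem:InducedGraphs} and~\ref{lem:Pigeonhole}. First apply Lemma~\ref{lem:SmallPrime} directly to $G$. If its conclusion (a) holds we are immediately done, since then the subgraph it produces has multiplicative data $\CP\cup\{p\}$ and $q(G')\ge q(G)/C_3\ge q(G)/C_5$. Otherwise conclusion (b) supplies an index $k\ge 0$ with $\mu(\CV_{p^k})/\mu(\CV)\ge 9/10$ and $\mu(\CW_{p^k})/\mu(\CW)\ge 9/10$, and the natural candidate subgraph is $G_{p^k,p^k}$ (which has the required multiplicative data $\CP\cup\{p\}$). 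By Lemma~\ref{lem:InducedGraphs}, the quality loss of this candidate reduces to lower-bounding the edge ratio $\mu(\CE_{p^k,p^k})/\mu(\CE)$, since the vertex-mass ratios $\mu(\CV)/\mu(\CV_{p^k})$, $\mu(\CW)/\mu(\CW_{p^k})$ and the prime correction factor are each at least $1$.

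Writing $\CA=\CV\setminus\CV_{p^k}$ and $\CB=\CW\setminus\CW_{p^k}$, I would apply Lemma~\ref{lem:Pigeonhole} to the $2\times 2$ partition $(\CV_{p^k}\sqcup\CA)\times(\CW_{p^k}\sqcup\CB)$, obtaining a GCD subgraph $G^{(1)}$ on one of the four pieces with $q(G^{(1)})\ge q(G)/4^{2+\tau}$. If the chosen piece is the diagonal $\CV_{p^k}\times\CW_{p^k}$, adjoining $p$ to the multiplicative data via Lemma~\ref{lem:InducedGraphs} costs no further quality (the edge and vertex ratios equal $1$) and we are done. If the chosen piece is ``asymmetric,'' say $\CV_{p^k}\times\CB$, then re-applying Lemma~\ref{lem:SmallPrime} to $G^{(1)}$ must produce its conclusion~(a): any index returned by conclusion~(b) would have to equal $k$ (since all $v\in\CV^{(1)}$ satisfy $p^k\|v$, forcing $\alpha^{(1)}_a$ to be supported on $a=k$) and simultaneously differ from $k$ (since $\CB\cap\CW_{p^k}=\emptyset$ makes $\beta^{(1)}_k=0$), a contradiction. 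We therefore acquire at most one further factor of $C_3$, for a combined loss $4^{2+\tau}C_3$ that is absorbed by $C_5$ after tuning constants.

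The only remaining case is the ``corner'' piece $\CA\times\CB$, which is where genuine iteration is needed. To keep the iteration depth under control I would precede the whole argument by an application of Lemma~\ref{lem:NoSmallSetEdges} with a suitably small parameter $\eta$; this does not decrease $q$ and it guarantees $\mu(\CE(\CA,\CB))\le \eta^{(2+2\tau)/(2+\tau)}\mu(\CE)$. Each recursive step produces a new distinct concentration index $k^{(i)}\ne k^{(j)}$ for $j<i$ (since the $p^{k^{(j)}}$-classes have been excised from $\CV^{(i)}$ and $\CW^{(i)}$), and the edge-measure shrinks by a fixed factor per step because of the small-set bound. The main obstacle is to match the total accumulated quality loss to the target $C_5=\max\{C_3,(50\log C_4)^3\}$: this requires showing the recursion terminates after at most $O(\log C_4)$ steps, which I expect to follow from the geometric decay of edge-measure in successive corners together with the quality accounting provided by Lemma~\ref{lem:InducedGraphs}.
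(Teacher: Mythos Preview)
Your plan leaves a genuine gap in the corner case $\CA\times\CB$. You say you ``expect'' the recursion to terminate in $O(\log C_4)$ steps via geometric decay of edge-measure, but nothing in your setup bounds the number of distinct $p$-adic valuations that can appear: each pass through the corner produces a new concentration index $k^{(i)}$, and while the vertex masses shrink by a factor $\le 1/10$, the quality also drops by $4^{2+\tau}$ per step, so after $i$ corner steps you have lost $4^{(2+\tau)i}$ with no control on $i$ in terms of $C_4,C_5$. The pre-application of Lemma~\ref{lem:NoSmallSetEdges} only tells you $\mu(\CE(\CA,\CB))\le\eta^{(2+2\tau)/(2+\tau)}\mu(\CE)$ relative to the \emph{original} edge set; it does not give geometric decay between successive corners, and in any case controlling edge-measure is not the same as controlling quality loss.

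The paper avoids recursion entirely. It first passes (via Lemma~\ref{lem:HighDegree}) to a subgraph $G^{(1)}$ in which every $v$ has $\mu(\Gamma_{G^{(1)}}(v))\ge\tfrac12\delta^{(1)}\mu(\CW^{(1)})$; this high-degree property is what you are missing. After Lemma~\ref{lem:SmallPrime} (and, for $p>10C_2$, Lemma~\ref{lem:MainLem}) yields the concentration index $k$, the paper applies Lemma~\ref{lem:UnbalancedSetEdges1} with the least $r$ such that $p^r>C_4$, so $r\le\lfloor\log C_4/\log 2\rfloor+1$. Either some $G^{(1)}_{p^k,p^\ell}$ with $|\ell-k|\ge r+1$ already has quality $>Mq(G^{(1)})$, or those far levels carry at most $\mu(\CE^{(1)})/4$ of the edges; combining this with the high-degree bound shows the edges in $\CV^{(1)}_{p^k}\times\bigcup_{|\ell-k|\le r}\CW^{(1)}_{p^\ell}$ have measure $\ge\mu(\CE^{(1)})/5$, and a \emph{single} application of Lemma~\ref{lem:Pigeonhole} over the $\le 2r+1\le 10\log C_4$ surviving $\ell$-levels finishes with total loss $\le(50\log C_4)^3\le C_5$. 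The crucial ingredient is Lemma~\ref{lem:UnbalancedSetEdges1}, which caps the number of relevant $p$-adic levels at $O(\log C_4)$; this is the idea your proposal lacks. (As a minor point, even your asymmetric case does not close numerically: $4^{2+\tau}C_3>C_5$ for the constants in \eqref{eq:CDefs}, and your re-application of Lemma~\ref{lem:SmallPrime} to $\CV_{p^k}\times\CB$ requires $p\in\CR$ of that piece, which fails when $k=0$.)
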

	\begin{proof}
		We first repeatedly apply Lemma \ref{lem:HighDegree} until we arrive at a GCD subgraph 
		\[
		G^{(1)}=(\mu,\CV^{(1)},\CW^{(1)},\CE^{(1)},\CP,f,g)
		\]
		of $G$ with edge density $\delta^{(1)}$ such that 
		\[
		q(G^{(1)})\geqslant q(G),
		\]
		as well as 
		\[
		\mu(\Gamma_{G^{(1)}}(v))\geqslant  \frac{1+\tau}{2+\tau}\cdot \delta^{(1)}\mu(\CW^{(1)}) 
		\qquad\text{for all}\  v\in \CV^{(1)}.
		\] 
		(We must eventually arrive at such a subgraph since the vertex sets are strictly decreasing at each stage but can never become empty since the edge density remains bounded away from 0.)
		
		We now apply Lemma \ref{lem:SmallPrime} to $G^{(1)}$. If conclusion $(a)$ of Lemma \ref{lem:SmallPrime} holds, then there is a GCD subgraph $G^{(2)}$ of $G^{(1)}$ satisfying the conclusion of Lemma \ref{lem:SmallIteration}, so we are done by taking $G'=G^{(2)}$. Therefore we may assume that instead conclusion $(b)$ of Lemma \ref{lem:SmallPrime} holds, so there is some $k\in\mathbb{Z}_{\ge 0}$ such that 
		\begin{equation}\label{eq:FirstLowerBound}
			\frac{\mu(\CV^{(1)}_{p^k})}{\mu(\CV^{(1)})}\geqslant \frac{9}{10}
			\quad\text{and}\quad
			\frac{\mu(\CW^{(1)}_{p^k})}{\mu(\CW^{(1)})}\geqslant \frac{9}{10}.
		\end{equation}
		In fact we claim that either the conclusion of Lemma \ref{lem:SmallIteration} holds, or we have the stronger condition
		\begin{equation}\label{eq:ImprovedLowerBound}
			\frac{\mu(\CV^{(1)}_{p^k})}{\mu(\CV^{(1)})}\geqslant \max\Bigl(\frac{9}{10},1-\frac{C_2}{p}\Bigr)
			\quad\text{and}\quad
			\frac{\mu(\CW^{(1)}_{p^k})}{\mu(\CW^{(1)})}\geqslant \max\Bigl(\frac{9}{10},1-\frac{C_2}{p}\Bigr).
		\end{equation}
		Relation \eqref{eq:ImprovedLowerBound} follows immediately from \eqref{eq:FirstLowerBound} if $p\leqslant 10 C_2$, so let us assume that $p > 10 C_2$. We then apply Lemma \ref{lem:MainLem} to $G^{(1)}$. If conclusion $(a)$ of Lemma \ref{lem:MainLem} holds, then there is a GCD subgraph $G^{(3)}$ of $G^{(1)}$ satisfying the required conditions of Lemma \ref{lem:SmallIteration}, so we are done by taking $G'=G^{(3)}$. Therefore we may assume that conclusion $(b)$ of Lemma \ref{lem:MainLem} holds, so that there is some $k'\geqslant0$ such that $\mu(\CV^{(1)}_{p^{k'}})/\mu(\CV^{(1)})\geqslant 1-C_2/p\geqslant 9/10$ and $\mu(\CW^{(1)}_{p^{k'}})/\mu(\CW^{(1)}) \geqslant 1-C_2/p\geqslant 9/10$. Since there cannot be two disjoint subsets of $\CV^{(1)}$ of density $\geqslant 9/10$, we must then have $k'=k$, thus proving \eqref{eq:ImprovedLowerBound} in this case too. 
		
		In conclusion, regardless of the size of $p$ we have established \eqref{eq:ImprovedLowerBound}. Next, we fix an integer $r\leqslant r_0 $ such that $p^r > C_4$, where $r_0$ is the smallest integer such that $2^{r_0} > C_4$, namely $r_0 = \lfloor  \frac{\log C_4}{\log 2} \rfloor + 1 $ and we apply Lemma \ref{lem:UnbalancedSetEdges1}.
		
		If conclusion $(a)$ of Lemma \ref{lem:UnbalancedSetEdges1} holds, then we take $G'=G^{(1)}_{p^k,p^\ell}$, whose quality satisfies $q(G')\geqslant M q(G^{(1)})\geqslant Mq(G)>0$. 
		In particular, $\delta'>0$, so the proof is complete in this case.
		
		Thus we may assume that conclusion $(b)$ of Lemma \ref{lem:UnbalancedSetEdges1} holds, so that
		\[
		\sum_{\ell\in\CL_{k,r}}  \mu(\CE^{(1)}_{p^k,p^\ell})\leqslant\frac{\mu(\CE^{(1)})}{  4p^{1+\tau/4}}<\frac{\mu(\CE^{(1)})}{4},
		\]
		where we recall the notation $\CL_{k,r}=\{\ell\in\Z_{\geqslant0}:|\ell-k|\geqslant r+1\}$. Let 
		\[
		\tilde{\CW}^{(1)}=\bigcup_{\substack{\ell\geqslant0\,:\, |\ell-k|\leqslant r}}\CW^{(1)}_{p^\ell}
		\]
		and let
		\[
		\CE^{(2)}=\CE^{(1)}\cap(\CV^{(1)}_{p^k}\times\tilde{\CW}^{(1)})\subseteq\CE^{(1)}
		\] 
		be the set of edges between $\CV^{(1)}_{p^k}$ and $\tilde{\CW}^{(1)}$ in $G^{(1)}$. Since $\mu(\CV^{(1)}_{p^k})\geqslant 9\mu(\CV^{(1)})/10$ and $\mu(\Gamma_{G^{(1)}}(v))\geqslant \frac{1+\tau}{2+\tau}\cdot  \delta^{(1)}\mu(\CW^{(1)})\geqslant \frac{1}{2}\cdot  \delta^{(1)}\mu(\CW^{(1)})$ for all $v\in \CV^{(1)}_{p^k}$, we have
		\begin{align*}
			\mu(\CE^{(2)})\geqslant \mu\bg(\CE^{(1)}\cap(\CV_{p^k}^{(1)}\times\CW^{(1)}) \bg) -\sum_{\ell\in\CL_{k,r}}  \mu(\CE^{(1)}_{p^k,p^\ell})
			&\geqslant \sum_{v\in \CV^{(1)}_{p^k}}\mu(v)\mu(\Gamma_{G^{(1)}}(v))-\frac{\mu(\CE^{(1)})}{4}\\
			&\geqslant \frac{1}{2}\cdot \delta^{(1)}\mu(\CV^{(1)}_{p^k})\mu(\CW^{(1)})-\frac{\mu(\CE^{(1)})}{4}\\
			&\geqslant \left(\frac{1}{2}\cdot\frac{9}{10}-\frac{1}{4}\right)\cdot \mu(\CE^{(1)}) = \frac{\mu(\CE^{(1)})}{5} >0. 
		\end{align*}
		Let $G^{(2)}=(\mu,\CV^{(1)}_{p^k},\tilde{\CW}^{(1)},\CE^{(2)},\CP,f,g)$ be the GCD subgraph of $G^{(1)}$ formed by restricting to $\CV^{(1)}_{p^k}$ and $\tilde{\CW}^{(1)}$. Since $\mu(\CE^{(2)})>0$, $G^{(2)}$ is a non-trivial GCD subgraph. In addition, we have that
		\begin{align*}
			\frac{q(G^{(2)})}{q(G^{(1)})}
			=\bigg(\frac{\mu(\CE^{(2)})}{\mu(\CE^{(1)})}\bigg)^{2+\tau}
			\bigg(\frac{\mu(\CV^{(1)})}{\mu(\CV_{p^k}^{(1)})}\bigg)^{1+\tau}
			\bigg( \frac{\mu(\CW^{(1)})}{\mu(\tilde{\CW}^{(1)})}\bigg)^{1+\tau}
			\geqslant\bigg(\frac{1}{5}\bigg)^{2+\tau}\cdot 1\cdot 1 \ge \frac{1}{5^{3}}.
		\end{align*}
		Finally, we apply Lemma \ref{lem:Pigeonhole} to the partition 
		\[
		\tilde{\CW}^{(1)}=\bigsqcup_{\ell\ge0\,:\, |\ell-k|\leqslant r}\CW^{(1)}_{p^\ell}
		\]
		of $\tilde{\CW}^{(1)}$ into $\leqslant 2\cdot r +1\leqslant 10 \log C_4$ subsets. This produces a GCD subgraph
		\[
		G^{(3)}=(\mu,\CV^{(1)}_{p^k},\CW^{(1)}_{p^\ell},\CE^{(1)}_{p^k,p^\ell},\CP,f,g)
		\]
		of $G^{(2)}$ for some $\ell\geqslant 0$ with $|\ell-k|\leqslant r$ such that
		\[
		q(G^{(3)})\geqslant \frac{q(G^{(2)})}{\bg(10 \log C_4\bg)^{2+\tau} }\geqslant \frac{q(G^{(1)})}{\bg(10 \log C_4 \bg)^3\cdot 5^{3}}\geqslant \frac{q(G)}{\bg(50 \log C_4\bg)^3}.
		\]
		Finally, we note that $G^{(1)}_{p^k,p^\ell}$ is a GCD subgraph of $G^{(3)}$ with set of primes $\CP\cup\{p\}$, edge density $\delta^{(1)}_{p^k,p^\ell}=\delta^{(3)}$, and quality $q(G^{(1)}_{p^k,p^\ell})\geqslant q(G^{(3)})$. Taking $G'=G^{(1)}_{p^k,p^\ell}$ then gives the result on recalling the definition \eqref{eq:CDefs} of $C_5$.
	\end{proof}

	\begin{proof}[Proof of Proposition \ref{prop:SmallPrimes}]
		If $\CR(G)\cap\{p\leqslant C_6\}=\emptyset$, then we can simply take $G'=G$.
		
		If $\CR(G)\cap\{p\leqslant C_6\}\ne\emptyset$, then we can choose a prime $p\in\CR(G)\cap\{p\leqslant C_6\}$ and apply Lemma \ref{lem:SmallIteration}. We do this repeatedly to produce a sequence of GCD subgraphs 
		\[
		G=:G_1\succeq G_2\succeq\cdots
		\]
		such that
		\begin{equation}\label{eq:SmallPrimesInduction}
		q(G_{i+1}) \ge q(G_i)/ C_5 .
		\end{equation}
		for each $i$. In addition, we let $\CP_i$ denote the set of primes associated to $G_i$, so that 
		$\mathcal{P} =\CP_1\subseteq\CP_2\subseteq\cdots \subseteq \mathcal{P} \cup 
		\bg(\mathcal{R}(G) \cap \{ p\leqslant C_6 \}\bg)$. 
		
		At each stage, the set $\CR(G_i)\cap\{p\leqslant C_6\}$ is strictly smaller than before. So, after at most $C_6$ steps we arrive at a GCD subgraph $G^{(1)}=(\mu,\CV^{(1)},\CW^{(1)},\CE^{(1)},\CP^{(1)},f^{(1)},g^{(1)})$ of $G$ with 
		\[
		\CP^{(1)}\subseteq\mathcal{P} \cup \bg(\mathcal{R}(G) \cap \{ p\leqslant C_6 \}\bg)
		\quad\text{and}\quad \CR(G^{(1)})\cap\{p\leqslant C_6 \}=\emptyset.
		\]
		Iterating \eqref{eq:SmallPrimesInduction} at most $C_6$ times, we find that $q(G^{(1)}) \ge q(G)/ C_5^{C_6}=q(G)/C_7$. 	Thus, taking $G'=G^{(1)}$ gives the result.
	\end{proof}
	
	\section{Proof of Proposition \ref{prop:IterationStep2}}\label{sec:IterationStep2}
	
	In this section we prove Proposition \ref{prop:IterationStep2}.

	\begin{lemma}[Quality increment even when a prime power divides almost all]\label{lem:MainLem2} 
		Consider a GCD graph $G=(\mu,\CV,\CW,\CE,\CP,f,g)$ with edge density $\delta>0$ and let $p\in\CR(G)$ be a prime with $p\geqslant C_6$. Then there is a GCD subgraph $G'$ of $G$ with set of primes $\CP'=\CP\cup\{p\}$ such that
		\[
		\CR(G')\subseteq\CR(G)\setminus\{p\}
		\quad\text{and}\quad  q(G')\geqslant q(G)>0.
		\]
	\end{lemma}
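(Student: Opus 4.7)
The plan is to apply Lemma~\ref{lem:MainLem} to $G$ with the prime $p$. Its first conclusion immediately yields a $G'$ with $q(G')\ge M^{\un_{f'(p)\ne g'(p)}}q(G)\ge q(G)$ having all the required multiplicative data, so we may assume from now on that its second conclusion holds: there is some $k\in\Z_{\ge 0}$ with both $\alpha_k:=\mu(\CV_{p^k})/\mu(\CV)\ge 1-C_2/p$ and $\beta_k:=\mu(\CW_{p^k})/\mu(\CW)\ge 1-C_2/p$. We then check whether there exists any pair $(a,b)\in\Z_{\ge 0}^2$ with $q(G_{p^a,p^b})\ge q(G)$; if so, we take $G'$ to be a $(\CP\cup\{p\},f_{p^a},g_{p^b})$-maximal GCD subgraph of $G_{p^a,p^b}$ (whose quality is at least $q(G_{p^a,p^b})$) and are done.

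Otherwise, the goal becomes to show $q(G_{p^k,p^k})\ge q(G)$, in which case $G'$ can be taken to be a maximal subgraph of $G_{p^k,p^k}$ with data $(\CP\cup\{p\},f_{p^k},g_{p^k})$. By Lemma~\ref{lem:InducedGraphs}, this reduces to a sufficiently tight upper bound on the sum $\sum_{(a,b)\ne(k,k)}\mu(\CE_{p^a,p^b})$. We split this sum into three families: pairs with both $a,b\ne k$, handled via Lemma~\ref{lem:SmallSetEdges} with $\eta=C_2/p$ (if its alternative (b) is triggered we pass to a strict GCD subgraph of strictly higher quality, with the same multiplicative data, and run the whole argument inductively on $|\CV|+|\CW|$); pairs with exactly one of $a,b$ equal to $k$ and $|a-b|\ge 2$, handled via Lemmas~\ref{lem:UnbalancedSetEdges1} and \ref{lem:UnbalancedSetEdges2} applied with $r=1$ (valid since $p\ge C_6\ge C_4$, and whose alternative (a) would again contradict our running hypothesis); and the four ``near-diagonal'' pairs $\{a,b\}=\{k,k\pm 1\}$, bounded directly through Lemma~\ref{lem:InducedGraphs} using the same running hypothesis.

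The main obstacle is the delicate quantitative balance at the end: the combined loss from the three families above must be dominated by the ``bonus'' gain coming from adding $p$ to $\CP$, i.e.\ the factor $(1-\un_{k\ge 1}/p)^{-2}(1-1/p^{1+\tau/4})^{-3}(\alpha_k\beta_k)^{-(1+\tau)}$. A naive first pass gives a near-diagonal loss of order $C_2/p$, which is too large; however, the stronger hypothesis $q(G_{p^k,p^{k\pm 1}})<q(G)$ (rather than merely $<Mq(G)$) sharpens this bound to $O\bgg(C_2^{(1+\tau)/(2+\tau)}/p\bgg)$, and combined with the lower bound on $1-\alpha_k\beta_k$ forced by $q(G_{p^k,p^k})<q(G)$, the same inequalities can be bootstrapped to drive the near-diagonal loss well below the bonus gain. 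The size condition $p\ge C_6$, together with the precise choice of the exponent $1+\tau/4$ in the quality definition and the form of the constants in \eqref{eq:CDefs} (in particular the factor $C_2^{10/\tau}$ inside the definition of $C_6$), is exactly what is needed to close the estimate.
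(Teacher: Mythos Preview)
Your approach has a genuine gap: it is \emph{not} true that some subgraph of the form $G_{p^a,p^b}$ must satisfy $q(G_{p^a,p^b})\ge q(G)$, so the contradiction you are aiming for cannot be derived. Take $\mu$ to be counting measure, $\CV=\CW$ a set of $p$ integers with $|\CV_{p^0}|=p-1$ and $|\CV_{p^1}|=1$, and $\CE=\CV\times\CW$ (so $\delta=1$). Then for $p\ge C_6$ one checks directly from Lemma~\ref{lem:InducedGraphs} that
\[
\frac{q(G_{p^0,p^0})}{q(G)}=\frac{(1-1/p)^2}{(1-1/p^{1+\tau/4})^3}<1,
\qquad
\frac{q(G_{p^0,p^1})}{q(G)}=\frac{q(G_{p^1,p^0})}{q(G)}=\frac{1-1/p}{(1-1/p^{1+\tau/4})^3}<1,
\]
and $q(G_{p^1,p^1})/q(G)=(p-1)^{-2}(1-1/p^{1+\tau/4})^{-3}\ll 1$. (All four inequalities use only that $p^{\tau/4}$ is large, which follows from $p\ge C_6\ge C_2^{10/\tau}$.) Moreover alternative (b) of Lemma~\ref{lem:SmallSetEdges} is not triggered here, so your induction on $|\CV|+|\CW|$ does not help. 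The underlying reason your bookkeeping fails is that when $k=0$ the quality bonus from adding $p$ is only the factor $(1-1/p^{1+\tau/4})^{-3}$, of size $1+O(p^{-1-\tau/4})$, whereas your near-diagonal loss at $(0,1)$ and $(1,0)$ is of size $\asymp 1/p$; no choice of constants closes this.

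The paper's proof avoids this by \emph{not} restricting to subgraphs of the shape $G_{p^a,p^b}$. Instead it builds a hybrid subgraph $G^+$ with vertex sets $\CV_{p^k}\cup\CV_{p^{k+1}}$ and $\CW_{p^k}\cup\CW_{p^{k+1}}$ and multiplicative data $f^+(p)=g^+(p)=k$; this absorbs the $(k,k{+}1)$ and $(k{+}1,k)$ near-diagonal edges into the edge set of $G^+$ rather than discarding them. In the example above $G^+$ has $\mu(\CE^+)=p^2-1$ and $q(G^+)/q(G)=(1-1/p^2)^{2+\tau}(1-1/p^{1+\tau/4})^{-3}>1$. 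For $k\ge 1$ the paper then plays $G^+$ off against the two remaining near-diagonal graphs $G_{p^k,p^{k-1}}$ and $G_{p^{k-1},p^k}$, and the resulting three-term pigeonhole reduces to the elementary inequality $\tfrac{x(1+\tau)+1}{2+\tau}\ge x^{(1+\tau)/(2+\tau)}$ applied twice --- which is exactly the margin that your four-term version lacks.
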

	
	\begin{proof} First of all, we may assume without loss of generality that for all sets $\CA\subseteq \CV$ and $\CB\subseteq \CW$, we have that 
		\begin{equation}\label{eq:NoEdgesBetweenSmallSets}
			\mu(\CE(\CA,\CB)) \leqslant \frac{\mu(\CE)}{p^{1+\frac{\tau}{3}}} 
			\quad\text{whenever}\quad \max\bigg\{ \frac{\mu(\CA)}{\mu(\CV)}, \frac{\mu(\CB)}{\mu(\CW)} \bigg\} \leqslant \frac{C_2}{p} .
		\end{equation}
		Indeed, if $G$ does not satisfy \eqref{eq:NoEdgesBetweenSmallSets}, then we apply Lemma \ref{lem:NoSmallSetEdges} with $\eta=C_2/p$ to replace $G$ by a non-trivial subgraph $G^{(1)}$ that does have this property (noticing that $(C_2/p)^{(2+2\tau)/(2+\tau)}\leqslant 1/(p^{1+ \tau/3})$ for $p\geqslant C_6$ from the definition \eqref{eq:CDefs}). In addition, $G^{(1)}$ has the same multiplicative data as $G$ and its quality is strictly larger. Hence, we may work with $G^{(1)}$ instead. So, from now on, we assume that \eqref{eq:NoEdgesBetweenSmallSets} holds. 
		
		We now apply Lemma \ref{lem:MainLem}. If conclusion $(a)$ of Lemma \ref{lem:MainLem} holds, then we are done. Thus we may assume that conclusion $(b)$ holds, that is to say there is some $k\in\Z_{\geqslant0}$ such that 
		\[
		\frac{\mu(\CV_{p^k})}{\mu(\CV)}\geqslant1-\frac{C_2}{p}
		\quad\text{and}\quad
		\frac{\mu(\CW_{p^k})}{\mu(\CW)}\geqslant 1-\frac{C_2}{p}.
		\]
		In particular, by \eqref{eq:NoEdgesBetweenSmallSets} we see that
		\begin{equation}\label{eq:NoBadEdges1}
			\mu\big(\CE(\CV\setminus\CV_{p^k},\CW\setminus \CW_{p^k})\big) 
			\leqslant \frac{\mu(\CE)}{p^{1+\frac{\tau}{3}}}.
		\end{equation}
		
		Now, set
		\[
		\tilde{\CV}_{p^k}=\CV_{p^{k-1}}\cup\CV_{p^k}\cup\CV_{p^{k+1}}
		\quad\text{and}\quad
		\tilde{\CW}_{p^k}=\CW_{p^{k-1}}\cup\CW_{p^k}\cup\CW_{p^{k+1}} ,
		\]
		with the convention that $\CV_{p^{-1}}=\emptyset=\CW_{p^{-1}}$. 
		In view of Lemmas \ref{lem:UnbalancedSetEdges1} and \ref{lem:UnbalancedSetEdges2} applied with $r=1$, we may assume that 
		\begin{equation}\label{eq:NoBadEdges2}
			\mu\big(\CE(\CV\setminus\tilde{\CV}_{p^k},\CW_{p^k})\big)
			= \sum_{\substack{i\ge0 \\ |i-k|\geqslant2}} \mu(\CE(\CV_{p^i},\CW_{p^k})) \leqslant \frac{\mu(\CE)}{4p^{1+\frac{\tau}{4}}},
		\end{equation}
		and
		\begin{equation}\label{eq:NoBadEdges3}
			\mu\big(\CE(\CV_{p^k},\CW\setminus\tilde{\CW}_{p^k})\big)
			= \sum_{\substack{j\ge0 \\ |j-k|\geqslant2}} \mu(\CE(\CV_{p^k},\CW_{p^j})) \leqslant \frac{\mu(\CE)}{4p^{1+\frac{\tau}{4}}}.
		\end{equation}
		Hence, if we let
		\[
		\CE^*=\CE(\CV_{p^k},\tilde{\CW}_{p^k})\cup \CE(\tilde{\CV}_{p^k},\CW_{p^k}) ,
		\]
		then \eqref{eq:NoBadEdges1}-\eqref{eq:NoBadEdges3} imply that
		\[
		\frac{\mu(\CE^*)}{\mu(\CE)} \geqslant 1- \frac{1}{2p^{1+\frac{\tau}{4}}} - \frac{1}{p^{1+\frac{\tau}{3}}} 
		\geqslant \bigg(1-\frac{1}{p^{1+\frac{\tau}{4}}}\bigg)^{\frac{2}{3}  } >0 ,
		\]
		where we used our assumption that $p\geqslant C_6$ and the inequality $(1-x)^{2/3}\leqslant 1-2x/3$ for $x\in[0,1]$ that follows from Taylor's theorem. We then consider the non-trivial GCD subgraph $G^*=(\mu,\CV,\CW,\CE^*,\CP,f,g)$ of $G$ formed by restricting the edge set to $\CE^*$. Note that
		\begin{equation}
			\label{G^*/G}
			\frac{q(G^*)}{q(G)}=\Bigl(\frac{\mu(\CE^*)}{\mu(\CE)}\Bigr)^{2+\tau} \geqslant \bggg(1-\frac{1}{p^{1+\frac{\tau}{4}}}\bggg)^{\frac{4 + 2\tau}{3}} \ge\bggg(1-\frac{1}{p^{1+\frac{\tau}{4}}}\bggg)^2.
		\end{equation}

		We  set $G^+=(\mu,\CV^+,\CW^+,\CE^+,\CP\cup\{p\},f^+,g^+)$, where:
		\[
		\CV^+=\CV_{p^k}\cup\CV_{p^{k+1}},
		\quad
		\CW^+=\CW_{p^k}\cup\CW_{p^{k+1}},
		\quad
		\CE^+=\CE^*\cap(\CV^+\times \CW^+),
		\]
		as well as
		\[
		f^+\big|_\CP=f,
		\quad
		f^+(p)=k,
		\quad
		g^+\big|_{\CP}=g,
		\quad
		g^+(p)=k .
		\]
		It is easy to check that $G^+$ is a GCD subgraph of $G^*$ (and hence of $G$). Note that $\mu(\CV^+)\geqslant \mu(\CV_{p^k})\geqslant 1-C_2/p>0$. Similarly, we have $\mu(\CW^+)>0$. Consequently, its quality satisfies the relation
		\[
		\frac{q(G^+)}{q(G^*)} =
		\bigg(\frac{\mu(\CE^+)}{\mu(\CE^*)}\bigg)^{2+\tau} 
		\bigg( \frac{\mu(\CV)}{\mu(\CV^+)}\bigg)^{1+\tau}
		\bigg( \frac{\mu(\CW)}{\mu(\CW^+)}\bigg)^{1+\tau}
		\bggg(1-\frac{\un_{k\geqslant1}}{p}\bggg)^{-2}
			 \bggg(1- \frac{1}{p^{1+\frac{\tau}{4}}} \bggg)^{-3} .	
		\]
		(This relation is valid even if $\mu(\CE^+)=0$.) We separate two cases.
		
		\bigskip
		
		\noindent 
		{\it Case 1:} $k=0$. 
		
		\medskip
		
		In this case $\CV_{p^{k-1}}=\CW_{p^{k-1}}=\emptyset$, $ \CE^+ = \CE^*$. And note that $\frac{\mu(\CV)}{\mu(\CV^+)}, \frac{\mu(\CW)}{\mu(\CW^+)} \geqslant 1$. As a consequence, 
		\[
		\frac{q(G^+)}{q(G^*)} \geqslant 	 \bggg(1- \frac{1}{p^{1+\frac{\tau}{4}}} \bggg)^{-3} .	
		\]
	Together with \eqref{G^*/G} this implies that
		\[
		\frac{q(G^+)}{q(G)}
		\ge \bggg(1-\frac{1}{p^{1+\frac{\tau}{4}}}\bggg)^{-1} \ge 1.
		\]
		In particular, $q(G^+)>0$. Thus the lemma follows by taking $G'=G^+$.

		\bigskip
		
		\noindent 
		{\it Case 2:} $k\geqslant 1$.

		\medskip
		
		In this case we have
		\begin{align}
							\frac{q(G^+)}{q(G^*)} =
				\bigg(\frac{\mu(\CE^+)}{\mu(\CE^*)}\bigg)^{2+\tau} 
				\bigg( \frac{\mu(\CV)}{\mu(\CV^+)}\bigg)^{1+\tau}
				\bigg( \frac{\mu(\CW)}{\mu(\CW^+)}\bigg)^{1+\tau}
			 \bggg(1-\frac{1}{p}\bggg)^{-2}
			 \bggg(1- \frac{1}{p^{1+\frac{\tau}{4}}} \bggg)^{-3} .	
				\label{eq:G+}
		\end{align}
		We also consider the GCD subgraphs  $G_{p^k,p^{k-1}}$ and $G_{p^{k-1},p^k}$ of $G$. Notice that $\mu(\CV_{p^k})\geqslant1-C_2/p>0$ for $p\geqslant C_6$. Hence, if $\mu(\CW_{p^{k-1}})>0$, then Lemma \ref{lem:InducedGraphs} implies that
		\begin{align}
							\frac{q(G_{p^k,p^{k-1}})}{q(G^*)} =
				\bigg(\frac{\mu(\CE_{p^k,p^{k-1}})}{\mu(\CE^*)}\bigg)^{2+\tau} 
				\bigg( \frac{\mu(\CV)}{\mu(\CV_{p^k})}\bigg)^{1+\tau}
				\bigg( \frac{\mu(\CW)}{\mu(\CW_{p^{k-1}})}\bigg)^{1+\tau}
				\frac{p}{\bg(1- \frac{1}{p^{1+\frac{\tau}{4}}} \bg)^3} .
				\label{eq:G2}
		\end{align}
		Similarly, if $\mu(\CV_{p^{k-1}})>0$, then we have 
		\begin{align}
						\frac{q(G_{p^{k-1},p^k})}{q(G^*)} =
			\bigg(\frac{\mu(\CE_{p^{k-1},p^k})}{\mu(\CE^*)}\bigg)^{2+\tau} 
			\bigg( \frac{\mu(\CV)}{\mu(\CV_{p^{k-1}})}\bigg)^{1+\tau}
			\bigg( \frac{\mu(\CW)}{\mu(\CW_{p^k})}\bigg)^{1+\tau}
			\frac{p}{\bg(1- \frac{1}{p^{1+\frac{\tau}{4} }} \bg)^3} .
			\label{eq:G3}
		\end{align}
		
		Since $\mu(\CV_{p^k})\geqslant(1-C_2/p)\mu(\CV)$, we have that $\mu(\CV_{p^{k-1}})\leqslant C_2\mu(\CV)/p$. Similarly, we have that $\mu(\CW_{p^{k-1}})\leqslant C_2\mu(\CW)/p$. To this end, let $0\leqslant A,B\leqslant C_2$ be such that
		\al{
			\frac{\mu(\CV_{p^{k-1}})}{\mu(\CV)} = \frac{A}{p} 
			\quad\text{and}\quad
			\frac{\mu(\CW_{p^{k-1}})}{\mu(\CW)} = \frac{B}{p}.
			\label{eq:AB}
		}
		We note that this implies that
		\[
		\frac{\mu(\CV^+)}{\mu(\CV)}\leqslant 1-\frac{A}{p}\quad\text{and}\quad\frac{\mu(\CW^+)}{\mu(\CW)} \leqslant  1-\frac{B}{p}.
		\]
		We also note that $\mu(\CE_{p^k,p^{k-1}})\leqslant \mu(\CV_{p^k})\mu(\CW_{p^{k-1}})\leqslant B\mu(\CV)\mu(\CW)/p$, so if $\mu(\CE_{p^k,p^{k-1}})>0$ then $B>0$. Similarly if $\mu(\CE_{p^{k-1},p^{k}})>0$ then $A>0$.

		Combining \eqref{eq:G+} and \eqref{eq:AB} with \eqref{G^*/G}, we find
		\begin{align}		
				\frac{q(G^+)}{q(G)} &\geqslant
			\bigg(\frac{\mu(\CE^+)}{\mu(\CE^*)}\bigg)^{2+\tau} 
			\frac{1}{\bg(1- \frac{A}{p}\bg)^{1+\tau} \bg(1- \frac{B}{p}\bg)^{1+\tau} \bg(1-  \frac{1}{p}\bg)^2 \bg(1-  \frac{1}{p^{1+\frac{\tau}{4} } }\bg)} .
			\label{eq:ineqA}
		\end{align}
		Similarly, provided $B>0$, \eqref{eq:G2}, \eqref{eq:AB} and \eqref{G^*/G} give
		\begin{align}
		\frac{q(G_{p^k,p^{k-1}})}{q(G)}
			& \geqslant
			\bigg(\frac{\mu(\CE_{p^k,p^{k-1}})}{\mu(\CE^*)}\bigg)^{2+\tau} 
			\frac{p^{2+\tau}}{B^{1+\tau} \bg( 1-  \frac{1}{ p^{1+\frac{\tau}{4} } }\bg)}
			\ \, ,\label{eq:ineqB}
		\end{align}
		and, provided $A>0$, \eqref{eq:G3}, \eqref{eq:AB} and \eqref{G^*/G} give
		\begin{align}
							\frac{q(G_{p^{k-1},p^k})}{q(G)} &\geqslant
				\bigg(\frac{\mu(\CE_{p^{k-1},p^k})}{\mu(\CE^*)}\bigg)^{2+\tau} 
				\frac{p^{2+\tau}}{A^{1+\tau}
					\bg(1-  \frac{1}{ p^{1+\frac{\tau}{4} } }\bg)} . \label{eq:ineqC}
		\end{align}
		We now claim that at least one of the following inequalities holds:
		\begin{align}
			\frac{\mu(\CE^+)}{\mu(\CE^*)}	
			&> \bggg( 1- \frac{A}{p}\bggg)^{ \frac{1+\tau}{2+\tau}   }
				\bggg(1- \frac{B}{p}\bggg)^{\frac{1+\tau}{2+\tau}  }
				\bggg(1- \frac{1}{p}
				\bggg)^{ \frac{2}{2+\tau}    }
				\bggg(1-  \frac{1}{ p^{1+\frac{\tau}{4} } }\bggg)^{ \frac{1}{3} } \,; \label{eq:ineq1}\\
			\frac{\mu(\CE_{p^k,p^{k-1}})}{\mu(\CE^*)}
			& >\frac{B^{ \frac{1+\tau}{2+\tau}   }}{p} 
				\bggg(1-  \frac{1}{ p^{1+\frac{\tau}{4} } }\bggg)^{ \frac{1}{3} }\,;\label{eq:ineq2}\\
			\frac{\mu(\CE_{p^{k-1},p^k})}{\mu(\CE^*)}
			& > \frac{A^{ \frac{1+\tau}{2+\tau}   }}{p} 
			\bggg(1-  \frac{1}{ p^{1+\frac{\tau}{4} } }\bggg)^{ \frac{1}{3} }\,.\label{eq:ineq3}
		\end{align}
		If \eqref{eq:ineq1} holds then $q(G^+)\geqslant q(G)$ by \eqref{eq:ineqA}. If \eqref{eq:ineq2} holds, then $\mu(\CE_{p^k,p^{k-1}})>0$, so $B>0$, and so $q(G_{p^k,p^{k-1}})\geqslant q(G)$ by \eqref{eq:ineqB} and \eqref{eq:ineq2}. Finally, if \eqref{eq:ineq3} holds, then $\mu(\CE_{p^{k-1},p^k})>0$, so $A>0$, and so $q(G_{p^{k-1},p^k})\geqslant q(G)$ by \eqref{eq:ineqC} and \eqref{eq:ineq3}. Therefore this claim would complete the proof by choosing $G'\in\{G^+, G_{p^k,p^{k-1}}, G_{p^{k-1},p^k}\}$ according to which of the inequalities \eqref{eq:ineq1}-\eqref{eq:ineq3} hold.
		
		Since $\mu(\CE^+)+\mu(\CE_{p^k,p^{k-1}})+\mu(\CE_{p^{k-1},p^k})=\mu(\CE^*)$, at least one of \eqref{eq:ineq1}-\eqref{eq:ineq3} holds if we can prove that
		\[
		S:= \bggg[
		\bggg(1- \frac{A}{p}\bggg)^{ \frac{1+\tau}{2+\tau}   }
		\bggg(1- \frac{B}{p}\bggg)^{\frac{1+\tau}{2+\tau}  }
		\bggg( 1- \frac{1}{p}\bggg)^{ \frac{2}{2+\tau}    } 
		+ \frac{B^{ \frac{1+\tau}{2+\tau}   }}{p} + \frac{A^{ \frac{1+\tau}{2+\tau}   }}{p} \bggg]
		\bggg(1-  \frac{1}{ p^{1+\frac{\tau}{4} } }\bggg)^{ \frac{1}{3} } < 1.
		\]
		
		Using the inequality $1-x\leqslant e^{-x}$ three times, we find that
		\[
		S\leqslant  \bigg[ 
		\exp\Big(-\frac{(A+B)(1+\tau) +2}{p(2+\tau)} \Big) 
		+ \frac{B^{ \frac{1+\tau}{2+\tau}   }}{p} + \frac{A^{ \frac{1+\tau}{2+\tau}   }}{p} \bigg] 
		\bggg(1-  \frac{1}{ p^{1+\frac{\tau}{4} } }\bggg)^{ \frac{1}{3} } .	
		\]
		
		Since we also have that $e^{-x}\leqslant 1-x+x^2/2$ for $x\geqslant0$, as well as $0\leqslant A,B\leqslant C_2$ and $0 < \tau < 1/100$, we conclude that
		\[
		S\leqslant  
		\bigg( 1-\frac{(A+B)(1+\tau) +2}{p(2+\tau)}+ \frac{C_2^2}{p^2}
		+ \frac{B^{ \frac{1+\tau}{2+\tau}   }}{p} + \frac{A^{ \frac{1+\tau}{2+\tau}   }}{p}\bigg) 
		\bggg(1-  \frac{1}{ p^{1+\frac{\tau}{4} } }\bggg)^{ \frac{1}{3} } .	
		\]
		
		Using the inequality $\frac{x(1+\tau)+1}{2+\tau}\geqslant x^{\frac{1+\tau}{2+\tau}}$ for $x\geqslant 0$ and $\tau >0$ , we obtain that
		\[
		S\leqslant  \bggg( 1+ \frac{C_2^2}{p^2}\bggg)
		\bggg( 1-  \frac{1}{ p^{1+\frac{\tau}{4} } }\bggg)^{ \frac{1}{3} } .	
		\]

		Note that $C_2^2/p^2 \leqslant 1/(3p^{1+\tau/4}) $ for $p \geqslant C_6$  (recall \eqref{eq:CDefs}). Since $(1-x)^{1/3}\leqslant 1-x/3$ for $x\in[0,1]$, we must have that $S \leqslant 1 - 1/(9p^{2+\tau/2}) < 1$ for $p\geqslant C_6$, thus completing the proof of the lemma.
	\end{proof}

	\begin{proof}[Proof of Proposition \ref{prop:IterationStep2}]
		This follows almost immediately from Lemma \ref{lem:MainLem2}. Our assumptions that $\CR(G)\subseteq\{p> C_6\}$ and $\CR^\sharp(G)\neq\emptyset$ imply that there is a prime $p > C_6$ lying in $\CR(G)$. 
		Thus we can apply Lemma \ref{lem:MainLem2} with this choice of $p$ and complete the proof.
	\end{proof}

	\section{Optimality of the exponent $1/2$ in Theorem \ref{thm:quantitative DS}}\label{sec:optimal}
	
	We shall prove the following result, which is sufficient to prove our claim that the exponent $1/2$ in Theorem \ref{thm:quantitative DS} cannot be improved in full generality. 
	
	\begin{proposition}
		Let $\psi(q)=\un_{q\ \text{prime}}/q$. Then
		\[
		\int_0^1\bg( N(\alpha;Q)-\Psi(Q)\bg)^2\dee\alpha= \Psi(Q)+O(1)  .
		\]
	\end{proposition}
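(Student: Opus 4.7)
The strategy is to expand the variance directly and bound the off-diagonal covariance sum. Since $\psi$ is supported on primes,
\[
\int_0^1\big(N(\alpha;Q)-\Psi(Q)\big)^2 \dee\alpha = \sum_{p,\,p'\le Q\text{ prime}} \lambda(\CA_p\cap\CA_{p'}) - \Psi(Q)^2.
\]
The diagonal terms ($p=p'$) contribute $\sum_p \lambda(\CA_p) - \sum_p \lambda(\CA_p)^2 = \Psi(Q) + O(1)$, using $\lambda(\CA_p) = 2(p-1)/p^2 \ll 1/p$ and $\sum_p 1/p^2 = O(1)$. It thus remains to prove that
\[
\sum_{p\ne p'} \big(\lambda(\CA_p\cap\CA_{p'}) - \lambda(\CA_p)\lambda(\CA_{p'})\big) = O(1).
\]

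For distinct primes, I would decompose $\un_{\CA_p} = X_p - E_p$, where $X_p(\alpha) = \un_{\|p\alpha\|<1/p}$ is the periodic indicator (with $\int_0^1 X_p = 2/p$, and $\|\cdot\|$ the distance to the nearest integer) and $E_p = \un_{[0,1/p^2)\cup(1-1/p^2,1]}$ is the boundary correction arising because $a\in\{0,p\}$ is excluded by coprimality. The cross-covariance splits as $\operatorname{Cov}(\un_{\CA_p},\un_{\CA_{p'}}) = \operatorname{Cov}(X_p,X_{p'}) - \operatorname{Cov}(X_p,E_{p'}) - \operatorname{Cov}(E_p,X_{p'}) + \operatorname{Cov}(E_p,E_{p'})$. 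Each piece is evaluated via Fourier expansion: $\hat X_p(k) = \sin(2\pi k/p)/(\pi k)$ is supported on $p\mid k$, and similarly $\hat E_p(k) = \sin(2\pi k/p^2)/(\pi k)$; Parseval combined with $\sum_{k\ge 1}\cos(2\pi kx)/k^2 = \pi^2 B(\{x\})$ (where $B(y)=y^2-y+1/6$ is extended periodically) yields explicit closed forms. For instance, with $r = p'\bmod p$ for $p<p'$,
\[
\operatorname{Cov}(X_p,X_{p'}) = \frac{1}{pp'}\big[B(\{r/p-p/p'\}) - B(\{r/p+p/p'\})\big].
\]

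The main work is showing each of the four off-diagonal sums is $O(1)$. The sum $\sum_{p<p'}\operatorname{Cov}(E_p,E_{p'})$ is easy: the closed form $2/\max(p,p')^2 - 4/(pp')^2$ gives $O\big(\sum_{p'}\pi(p'-1)/(p')^2\big) = O\big(\sum 1/(p\log p)\big) = O(1)$ by Mertens. For the three sums involving $X_p$, I would exploit the fact that in the ``generic'' regime where both fractional parts $\{r/p\pm p/p'\}$ lie in a common fundamental domain, the \emph{exact} quadratic identity $B(y-\delta)-B(y+\delta) = -2\delta(2y-1)$ gives $\operatorname{Cov}(X_p,X_{p'}) = (2-4r/p)/(p')^2$. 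This leading term has mean zero over $r\in\{1,\ldots,p-1\}$, and equidistribution of primes $p'>p$ modulo $p$ (Siegel--Walfisz) supplies the needed cancellation, leaving an error of size $O(1)$. The main obstacle will be the ``straddling'' regime, where $\{r/p-p/p'\}$ and $\{r/p+p/p'\}$ are separated by an integer: this arises only when $p<p'<p^2$ and $r(p')$ falls in a window of width $\sim p^2/p'$ (equivalently, $p'$ is within $p^2/p'$ of a multiple of $p$), where the trivial bound $|\operatorname{Cov}|\ll 1/(pp')$ combined with the density $O(p/p')$ of such primes yields total contribution $\sum_p (1/p)\sum_{\text{straddling } p'}1/p' \ll \sum_p 1/(p\log p) = O(1)$ via PNT in arithmetic progressions. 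Analogous arguments control $\operatorname{Cov}(X_p,E_{p'})$ and $\operatorname{Cov}(E_p,X_{p'})$, and combining the four estimates yields the desired $\Psi(Q)+O(1)$.
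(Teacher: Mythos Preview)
Your Fourier decomposition is a valid route and the pieces can be made to work, but it is far more laborious than the paper's argument and invokes unnecessarily heavy machinery. The paper simply reuses the overlap computation from Lemma~\ref{lem:ovelap estimate}: for distinct primes $q,r$ one has $\ell=m=1$, $n=qr$, and M\"obius inversion together with the Euler--Maclaurin estimate \eqref{eq:w partial summation} gives directly
\[
\lambda(\CA_q\cap\CA_r)=4\Delta\delta\,\phi(q)\phi(r)+O(\delta)=\lambda(\CA_q)\lambda(\CA_r)+O\bigl(1/\max\{q,r\}^2\bigr),
\]
after which $\sum_{r<q}1/q^2\ll\sum_q 1/(q\log q)=O(1)$ finishes the proof in a few lines. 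No decomposition $\un_{\CA_p}=X_p-E_p$, no Bernoulli-polynomial closed forms, and certainly no Siegel--Walfisz are needed.

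Two specific comments on your sketch. First, your stated Fourier coefficient $\hat X_p(k)=\sin(2\pi k/p)/(\pi k)$ is incorrect as written (it would vanish on the support $p\mid k$); the correct value is $\hat X_p(mp)=\sin(2\pi m/p)/(\pi m)$, which does lead to your covariance formula. Second, and more importantly, your appeal to Siegel--Walfisz for cancellation in the ``generic'' term $(2-4r/p)/(p')^2$ is unnecessary: this quantity is bounded in absolute value by $2/(p')^2$, and $\sum_{p<p'}2/(p')^2\ll\sum_{p'}\pi(p')/(p')^2=O(1)$ with no cancellation at all. Likewise, a short direct computation (using $B_2(1-x)=B_2(x)$) shows that in the ``straddling'' regime one still has $|\operatorname{Cov}(X_p,X_{p'})|\ll 1/(p')^2$, so the density argument via primes in progressions is also dispensable. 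In short, your approach reproduces the bound $|\operatorname{Cov}(\un_{\CA_p},\un_{\CA_{p'}})|\ll 1/\max(p,p')^2$ by an indirect path, whereas the paper obtains it in one stroke from the existing overlap machinery.
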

	
	\begin{proof} Arguing as in the proof of Theorem \ref{thm:variance} in Section \ref{sec:three propositions}, we have
		\begin{align*}
		\int_0^1\bg( N(\alpha;Q)-\Psi(Q)\bg)^2\dee\alpha
			&= \mathop{\sum\sum}_{q,r\le Q} \lambda(\CA_q\cap\CA_r) - \Psi(Q)^2 \\
			&= \Psi(Q) + \mathop{\sum\sum}_{\substack{q,r\le Q \\ q\neq r}} \lambda(\CA_q\cap\CA_r) - \Psi(Q)^2 .
		\end{align*}
		
		Let $q\neq r$ be two primes. We shall compute $\lambda(\CA_q\cap\CA_r)$ by adapting the proof of Lemma \ref{lem:ovelap estimate}. In the notation there, we have $\ell=m=1$ and $n=qr$. Hence, 
		\[
		\lambda(\CA_q\cap\CA_r) = 2\sum_{\substack{c\ge 1 \\ (c,qr)=1}} w\bgg(\frac{c}{qr}\bgg) ,
		\]
		where $w$ is defined as in Lemma \ref{lem:ovelap estimate}. Recall also the definition of $\delta$ and $\Delta$ there. With our definition of $\psi$, we have $\delta=1/\max\{q,r\}^2$ and $\Delta=1/\min\{q,r\}^2$. 
	By M\"obius inversion and estimate \eqref{eq:w partial summation}, we have
		\begin{align*}
		\lambda(\CA_q\cap\CA_r) 
			= 2\sum_{\substack{c\ge 1 \\ (c,qr)=1}} w\bgg(\frac{c}{qr}\bgg)  
			&= 2\sum_{d|qr}\mu(d) \sum_{c'\ge 1} w\bgg(\frac{c'd}{qr}\bgg)  \\
			&= 4\Delta\delta \phi(q)\phi(r) + O(\delta) \\
			&= \lambda(\CA_q)\lambda(\CA_r) + O\bggg(\frac{1}{\max\{q,r\}^2}\bggg).
		\end{align*}
Hence,
		\begin{align*}
			\int_0^1\bg( N(\alpha;Q)-\Psi(Q)\bg)^2\dee\alpha
			&= \Psi(Q) + \mathop{\sum\sum}_{\substack{q,r\le Q \\ q\neq r}} \lambda(\CA_q)\lambda(\CA_r) - \Psi(Q)^2 
			+ O\bggg( \mathop{\sum\sum}_{\substack{q,r \,\text{prime} \\ r<q\le Q}}   \frac{1}{q^2}\bggg) \\
			&= \Psi(Q) - \sum_{q\le Q}\lambda(\CA_q)^2 + O\bggg( \mathop{\sum\sum}_{\substack{q,r \,\text{prime} \\ r<q\le Q}}   \frac{1}{q^2}\bggg).
		\end{align*}
		The sum of $\lambda(\CA_q)^2$ is uniformly bounded by the choice of $\psi$. In addition, by the prime number theorem, the sum inside the big-Oh is $\ll\sum_{q \, \text{prime} } 1/(q\log q)\ll 1$.  We thus conclude that
		\[
			\int_0^1\bg( N(\alpha;Q)-\Psi(Q)\bg)^2\dee\alpha= \Psi(Q)+O(1),
		\]
		as claimed. This completes the proof of the proposition. 
	\end{proof}

	\section*{Acknowledgements}
	The authors would like to thank Manuel Hauke, Santiago Vazquez Saez and Aled Walker for sharing a preliminary version of their preprint \cite{HVW} with them. D.Y.~would also like to thank Christoph Aistleitner for introducing him to this problem and for some interesting discussion on the reference \cite{ABH}. Finally, we thank the anonymous referees of the paper for their careful reading and valuable comments.
	
	\medskip
	
	D.K.~was supported by the Courtois Chair II in fundamental research, by the Natural Sciences and Engineering Research Council of Canada (RGPIN-2018-05699) and by the Fonds de recherche du Qu\'ebec - Nature et technologies (2022-PR-300951).
	
	J.M.~was supported  by the European Research Council (ERC) under the European Union’s Horizon 2020 research and innovation programme (grant agreement No 851318).
	
	 D.Y.~was supported  by a postdoctoral fellowship funded by the Courtois Chair II in fundamental research,
 the Natural Sciences and Engineering Research Council of Canada, and the Fonds de recherche du Qu\'ebec - Nature et technologies.

\end{document}